\documentclass[11pt, twoside, reqno]{amsart}
\usepackage[a4paper,left=35mm,right=35mm,top=30mm,bottom=30mm,marginpar=25mm]{geometry} 

\usepackage[utf8]{inputenc}
\usepackage{lmodern}
\usepackage{booktabs}
\usepackage{hyperref} 
\usepackage{xcolor}    
\usepackage{esint}
\usepackage{accents}
\allowdisplaybreaks

\definecolor{darkcyan}{cmyk}{1, 0, 0, 0.6}
\hypersetup{
    colorlinks=true,
    linkcolor=darkcyan,  
    urlcolor=blue,    
    citecolor=darkcyan 
}

\usepackage{amsaddr}
\usepackage{amssymb}
    \usepackage{amsmath}
\usepackage{enumerate}
\usepackage{tikz}
\usepackage{comment}

\usepackage{mathrsfs}
\usepackage{stmaryrd}
\usepackage{enumitem}
\usepackage{dirtytalk}
\usepackage{tikz-cd} 
\newtheorem{thm}{Theorem}[section]
\newtheorem{prop}[thm]{Proposition}
\newtheorem{coro}[thm]{Corollary}
\newtheorem{lemma}[thm]{Lemma}
\theoremstyle{definition}

\newtheorem{defi}[thm]{Definition}
\theoremstyle{remark}
\newtheorem{remark}[thm]{Remark}
\numberwithin{equation}{section}

\newcommand*\dif{\mathop{}\!\mathrm{d}}

\newcommand{\ubar}[1]{\underaccent{\bar}{#1}}

\newcommand{\R}{\mathbb R}

\newcommand{\mc}[1]{\mathcal{#1}}

\newcommand{\mr}[1]{\mathrm{#1}}
\newcommand{\bs}[1]{\boldsymbol{#1}}
\newcommand{\br}[1]{\bs{\mathrm{#1}}}
\newcommand{\ms}[1]{\mathsf{#1}}
\newcommand{\msc}[1]{\mathscr{#1}}

\newcommand{\ul}[1]{\underline{#1}}
\newcommand{\wt}[1]{\widetilde{#1}}
\DeclareMathOperator*{\esssup}{ess\,sup}
\DeclareMathOperator*{\essinf}{ess\,inf}

\usepackage{orcidlink}

\title[Strong traces for $2\times 2$ systems]{Strong traces for solutions of $2\times 2$ nonlinear hyperbolic systems}
\author{Luca Talamini \orcidlink{0009-0008-5147-5184}}
\address{Mathematics Area, SISSA, Trieste}
\email{ltalamin@sissa.it  \textnormal{(Luca Talamini)}}
\begin{document}

\begin{abstract}
  In this paper we show that bounded entropy solutions of genuinely nonlinear $2\times 2$ systems of strictly hyperbolic conservation laws admit a strong trace on Lipschitz curves. The main argument relies on a new half-space Liouville-type theorem for isentropic solutions with constant normal traces, providing the first extension to general genuinely nonlinear systems of the strong trace properties available for scalar conservation laws.
\end{abstract}

\maketitle

\section{Introduction}
In this paper we study trace properties of $\mathbf L^\infty$ entropy solutions to $2\times 2$ systems of conservation laws in one space dimension
\begin{equation}\label{eq:systemi}
\partial_t\,\bs  u(t,x) + \partial_x \, f(\bs  u(t,x)) = 0  \qquad \text{in $\mathscr D^\prime_{t,x}$}.
\end{equation}
Here $\bs u: (0, +\infty) \times \mathbb R \to \mc U \subset  \mathbb R^2$ is a vector valued function, $\mc U\subset \mathbb R^2$ is a bounded open set where $\bs u = (u_1, u_2)$ takes values, and  $f : \mc U \mapsto \mathbb R^2$ is a  twice continuously differentiable flux. 

\vspace{0.2cm}

Developing the notion of strong traces in the setting of $\mathbf L^\infty$ solutions to $2 \times 2$ systems of conservation laws is important for several reasons. First, it allows for strong formulations of initial and initial-boundary value problems (see e.g. the discussion in \cite{Vas01}). Moreover,  it has been recently shown in \cite{CKV22} that the existence of sufficiently strong traces (indeed, even stronger than the ones considered in the present paper) implies uniqueness of $\mathbf L^\infty$ solutions when compared with small $BV$ solutions, where uniqueness is known (see \cite{BDL23} for the small $BV$ case, and \cite{ABM25, BMG25} for a class of small $\mathbf L^\infty$~data).
Note that the only available method to construct weak entropy solutions to genuinely nonlinear systems \eqref{eq:systemi} with $\mathbf L^\infty$ data is the compensated compactness approach \cite{DiP83a, Tar79, Ser00}. However, this procedure is inherently non-constructive, and as a consequence no regularity result
can be derived in the general case for $\mathbf{L}^\infty$~solutions obtained through this method.

\vspace{0.2cm}

Currently, available trace results concern only the scalar case. In particular, the seminal paper \cite{Vas01} established for the first time the existence of strong traces for scalar conservation laws under a non-degeneracy condition on the flux, while \cite{KV07, Pa07} extended this result to general fluxes. The main obstruction 
to establishing these results for  systems of conservation laws
is the intrinsically non-local structure of the kinetic formulations. This fact, already noted e.g. in \cite{LPT94b} in the context of the system of isentropic gas dynamics,  has so far prevented the development of a corresponding trace and regularity theory for systems. The only exception is the case of 
the Euler system with adiabatic coefficient $\gamma = 3$ (see \cite{Gol23}), where other regularizing effects have been identified \cite{AMT26}. \vspace{0.2cm}

In this paper, we prove that if the system is strictly hyperbolic and genuinely nonlinear, then $\mathbf L^\infty$ entropy solutions satisfy the same trace properties known in the scalar case, thus providing the first extension of \cite{Vas01} to systems.  

\vspace{0.2cm}
The main result is obtained through a blow-up argument within the framework of the kinetic formulation. The main difficulty lies in proving that, at almost every point of a Lipschitz curve, the corresponding blow-ups converge to a unique function that is constant on each side of a half-space. This is achieved by establishing a Liouville-type theorem for half-space isentropic solutions with constant normal kinetic trace, which, at the level of the kinetic formulation, plays the role of the normal traces introduced in \cite{CT05}.

\vspace{0.2cm}

In the following, we describe our main results in detail.

\subsection{General setting} Throughout the paper we assume that the system is strictly hyperbolic, that is, for every $\bs u \in \mc U$, the Jacobian matrix $Df(\bs u)$ has two distinct and real eigenvalues $\lambda_1(\bs u) < \lambda_2(\bs u)$, with corresponding eigenvectors $r_1(\bs u), r_2(\bs u)$. We will denote by $\ell_1(\bs u), \ell_2(\bs u)$ the corresponding left eigenvectors, normalized so that 
$$
\ell_i(\bs u) \cdot r_j(\bs u) = \delta_{i,j} \qquad \forall \; \bs u \in \mc U.
$$

We consider the corresponding \textit{Riemann invariants} change of coordinates $\phi = (\phi_1, \phi_2 ): \mc U \to \mathbb R^2$, that is defined, at least locally, by 
\begin{equation}\label{eq:riemdef}
    \nabla \phi_1(\bs u) = \ell_1(\bs u), \qquad \nabla \phi_2(\bs u) = \ell_2(\bs u) \qquad \forall \bs u \in \mc U
\end{equation}
and we let $$\mc W \doteq (\phi_1, \phi_2)(\mc U) \subset \mathbb R^2.$$ We assume throughout the paper that this change of variables is globally smooth and invertible on $\mc U$. Since we consider bounded solutions, here $\mc U$ is assumed to be a rectangle in Riemann invariants, i.e. there holds
\begin{equation}\label{eq:Udef}
\mc W = [\ubar w, \bar w] \times [\ubar z, \bar z], \qquad \mc U = (\phi_1, \phi_2)^{-1}(\mc W), \qquad (w,z) = (\phi_1(\bs u), \phi_2(\bs u)).
\end{equation}
We recall the definition of genuine nonlinear system.

\begin{defi}
The eigenvalue $\lambda_i$ is called \emph{genuine nonlinear} (\textbf{GNL}) if there is a positive constant $c > 0$ such that 
\begin{equation}\label{eq:gnldef}
    \nabla \lambda_i(\bs u) \cdot r_i(\bs u)  > c \qquad \text{for all $\bs u \in \mc U$ and for $i = 1,2$}.
\end{equation}
The system \eqref{eq:systemi} is \textbf{GNL} if both eigenvalues are \textbf{GNL}.
\end{defi}

A pair of Lipschitz functions $\eta, q: \mc U \subset \mathbb R^2 \to \mathbb R$ is called an \textit{entropy-entropy flux pair} for \eqref{eq:systemi} if
    \begin{equation}\label{eq:entropyeq}
        \nabla \eta(\bs u) \cdot \mr D f(\bs u) = \nabla q(\bs u) \qquad \text{for almost every $\bs u \in \mc U$}.
    \end{equation}

In the following $\Omega \subset (0,+\infty) \times \mathbb R$ is an open set.
\begin{defi}[Entropy solution]
    We say that $\bs u \in \mathbf L^\infty(\Omega, \; \mc U)$ is a \emph{weak entropy solution} of \eqref{eq:systemi} if it satisfies \eqref{eq:systemi} and for every convex entropy $\eta : \mc U \to \mathbb R$ there holds
    \begin{equation}\label{eq:entrdis}
        \mu_\eta := \partial_t \eta (\bs u) + \partial_x q(\bs u) \leq 0 \qquad \text{in $\mathscr D^\prime_{t,x}$}.
    \end{equation}
\end{defi}
The entropy $\eta : \mc U \to \mathbb R$ is said to be uniformly convex if there is a positive $C >0$ such that $\nabla^2 \eta \geq C >0$ in $\mc U$.
\begin{remark}
    It could be that the system does not admit any convex entropy. However, for most physical systems and for $2\times 2$ systems satisfying mild conditions on $f$, uniformly convex entropies always exist (see e.g. \cite[Chapter 12]{Daf16}).
\end{remark}

Before stating our main result, we first distinguish two concepts of strong traces on Lipschitz curves.
\begin{defi}[Pointwise traces on Lipschitz curves]\label{defi:strongt}
    Let $\bs u \in \mathbf L^\infty(\Omega, \; \mc U)$. We say that $\bs u$ admits \emph{strong traces on Lipschitz curves} if, for every $\gamma: [0,T] \to \mathbb R$ Lipschitz, there are two Borel functions $\bs u^-, \bs u^+ : [0,T] \to \mc U$, left and right pointwise traces of $\bs u$ on $\gamma$, such that for $\mathscr L^1$-a.e. $t$,
    \begin{equation}\label{eq:blowuptrace}
      \lim_{r \to 0^+} \frac{1}{r^2} \Bigg(\int_{B_r^-(t, \gamma(t))} |\bs u(s, y) - \bs u^-(t)|  \dif y \dif s +\int_{B_r^+(t, \gamma(t))} |\bs u(s, y) - \bs u^+(t)|  \dif y \dif s \Bigg) = 0
    \end{equation}
    where we set for a.e. $t >0$ 
    \begin{equation}\label{eq:halfball}
    B_r^{\pm}(t,\gamma(t)) := \Big\{ (s,y) \in B_r(t, \gamma(t))\; \big| \; y \gtrless \gamma(s)  \Big\}
        \end{equation}
    $B_r(t,\gamma(t))$ is a ball of radius $r$ centered at $(t,\gamma(t))$, and $\vec n(t)$ is the normal to the curve $\gamma$ at time $t$:
    $$
    \vec n(t) =\frac{(-\dot \gamma(t), 1)}{\sqrt{1+{\dot \gamma}^2}}
    $$
\end{defi}
A stronger notion of trace will also be used.

\begin{defi}[$\mathbf L^1$ traces on Lipschitz curves]\label{defi:strongL1t}
    Let $\bs u \in \mathbf L^\infty(\Omega, \; \mc U)$. We say that $\bs u$ admits \emph{$\mathbf L^1$ traces on Lipschitz curves} if, for every $\gamma: [0,T] \to \mathbb R$ Lipschitz, there are two Borel functions $\bs u^-, \bs u^+ : [0,T] \to \mc U$, left and right $\mathbf L^1$-traces of $\bs u$ on $\gamma$, such that  
    \begin{equation}
          \lim_{\delta \to 0^+} \esssup_{y \in (0, \delta)} \Bigg( \int_{0}^{T} \big| \bs u(t, \gamma(t) + y)- \bs u^{+}(t) \big| \dif t  + \int_{0}^{T} \big| \bs u(t, \gamma(t) - y)- \bs u^{-}(t) \big| \dif t \Bigg)  = 0
      \end{equation}
\end{defi}

The first main results of the paper asserts the existence of pointwise strong traces for solutions to general \textbf{GNL} systems. 

\begin{thm}\label{thm:main}
   Assume that the system \eqref{eq:systemi} is strictly hyperbolic and \textbf{GNL}, and that there exists at least one uniformly convex entropy. Let $\bs u \in \mathbf L^\infty(\Omega, \; \mc U)$ be an entropy solution of \eqref{eq:systemi}. Then $\bs u$ admits pointwise traces on Lipschitz curves in the sense of Definition \ref{defi:strongt}.
\end{thm}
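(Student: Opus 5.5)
We follow the blow-up strategy of \cite{Vas01}, adapted to the kinetic formulation of $2\times 2$ systems in Riemann invariants. First we straighten the curve: the map $(t,x)\mapsto (t,x-\gamma(t))$ is bi-Lipschitz, and $\tilde{\bs u}(t,y)=\bs u(t,y+\gamma(t))$ solves $\partial_t \tilde{\bs u}+\partial_y\big(f(\tilde{\bs u})-\dot\gamma(t)\tilde{\bs u}\big)=0$, with the analogous entropy inequalities. The coefficient $\dot\gamma$ is only $\mathbf L^\infty$, so we work at Lebesgue points $t_0$ of $\dot\gamma$, and we also require $t_0$ to be a Lebesgue point, with respect to the trace of $\mathscr L^1$ on the curve, of the normal traces of all fluxes $f(\bs u)$ and $q(\bs u)$ (these exist weakly by \cite{CT05}, since the fields are divergence-measure). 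We also require $t_0$ to be a point where the entropy dissipation measures $\mu_\eta$ have zero upper density on the curve, i.e. $|\mu_\eta|(B_r(t_0,\gamma(t_0)))=o(r)$; this holds for $\mathscr L^1$-a.e. $t_0$ because $|\mu_\eta|$ is a Radon measure. Only countably many entropies are needed, for instance a dense family of Lax entropies, or the kinetic densities $\chi$ in $(w,z)$.

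Next we blow up: $\bs u_r(s,y)=\tilde{\bs u}(t_0+rs,ry)$. By the choice of $t_0$, the rescaled equations have flux $f(\bs v)-\dot\gamma(t_0)\bs v$ up to an error that is $o(1)$ in $\mathbf L^1_{loc}$, and the rescaled dissipation measures converge to $0$ locally. Genuine nonlinearity together with compensated compactness (Tartar--DiPerna \cite{DiP83a, Tar79}) gives strong $\mathbf L^1_{loc}$ compactness of $\{\bs u_r\}$ separately in $\{y>0\}$ and $\{y<0\}$. Every limit $\bs v$ is therefore an \emph{isentropic} solution (all entropies are conserved) in each half-plane, with flux shifted by the constant speed $\sigma=\dot\gamma(t_0)$. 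Moreover, by the Lebesgue point choice, the normal kinetic traces of $\bs v$ on $\{y=0\}$ are constant, equal to the values at $t_0$ of the weak normal traces of $\bs u$.

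The key step, and the main obstacle, is a half-space Liouville theorem: an isentropic bounded solution on $\{y>0\}$ whose normal traces of $f(\bs v)-\sigma \bs v$ and of all $q(\bs v)-\sigma\eta(\bs v)$ are constant must itself be constant. To prove it, we would pass to Riemann invariants. For a GNL $2\times2$ system, isentropic solutions have a kinetic formulation in which the indicator functions $\chi(w\,;\xi)$ and $\chi(z\,;\zeta)$ are transported by speeds $\lambda_1,\lambda_2$ that are non-local, since they depend on the other invariant. We would try to show first that constant normal traces force the kinetic normal trace to be the indicator of a single state, using a Lax-entropy / strict-convexity argument: equality in Jensen's inequality for the traces of all entropy fluxes. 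We would then propagate constancy into the interior by characteristic arguments, using GNL to rule out rarefaction-type self-similar profiles. Self-similar profiles of the form $\bs v(s,y)=V(y/s)$ are the natural counterexamples, and they must be excluded on a full half-plane in $s\in\mathbb R$. We expect this to exploit the fact that each isentropic Riemann invariant is constant along its own characteristics. If the non-locality obstructs this, we would first prove that $\bs v$ is independent of $s$ by a Fourier/averaging argument in the tangential direction, and then conclude from the resulting ODE.

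Finally, uniqueness of the blow-up limit, namely the constant state $\bs u^\pm(t_0)$ determined by the normal trace data, implies convergence of the whole family $\bs u_r$, not just of subsequences. This is exactly \eqref{eq:blowuptrace} after undoing the bi-Lipschitz change of variables, since the half-balls are comparable to rectangles. Borel measurability of $\bs u^\pm$ follows from expressing them as pointwise limits of averages.
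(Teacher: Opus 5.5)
There is a genuine gap, and it sits exactly where you place the main obstacle. Your reduction is essentially the paper's Propositions \ref{prop:weaktraces} and \ref{prop:tangentu} plus the closing argument: straightening, choosing good base points, compensated compactness for the blow-ups, showing every blow-up limit is isentropic in each half-plane with constant normal kinetic traces, and deducing uniqueness of limits. The Liouville statement you formulate is also the right one (Theorem \ref{thm:Liuv1}). But you do not prove it, and none of the routes you list would work.

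\textbf{Why the suggested routes fail.} The Jensen idea has nothing to work with. The blow-up is isentropic, so there is no entropy inequality to saturate. The normal traces are linear functionals $\alpha\mapsto\int(\eta,q)\cdot\vec n\,\dif\alpha$ of the boundary Young measure, and their constancy does not make $\alpha$ a Dirac mass. The paper stresses that a priori the extreme points of the supports of $\wt{\bs\Phi}^+$ and $\bs\Phi^+$ may satisfy $\wt w_\star<w^\star$, i.e.\ the kinetic traces need not come from a single state. Its Dirac reduction (Lemma \ref{lemma:alphaisdelta}) needs the pointwise trace as input, and also \eqref{eq:globhyp}. ``Riemann invariants are constant along their characteristics'' has no meaning for $\mathbf L^\infty$ solutions. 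Moreover, the state-dependence of the kinetic speeds $\bs\lambda_1[\xi](\bs u)$ is precisely what breaks the scalar argument, where the kinetic equation is free transport with state-independent speeds. Ruling out self-similar profiles does not rule out general solutions. Averaging lemmas give compactness, not invariance in $s$.

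\textbf{The missing idea.} The paper uses a \emph{kinetic} Lagrangian representation, and only near the extreme values of $w$. For $\xi\in(\overline w-\bar r,\overline w)$ one has $\bs\chi[\xi](\bs u)\ge c>0$ on $\{w\ge\xi\}$. The level sets of the Hamiltonian $\br H[\xi]$, with $\nabla\br H[\xi]=(-\bs\psi[\xi](\bs u),\bs\chi[\xi](\bs u))$, then give ordered Lipschitz curves with $\dot\gamma=\bs\lambda_1[\xi](\bs u)$ and $w\ge\xi$ along them.

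A flux balance shows that for $\xi>w^\star:=\esssup\{\bs\Phi^+\neq0\}$ these curves never reach $\gamma_b$ (Proposition \ref{lemma:highcurves}). Consider the region between $\gamma_b$ and such a curve. GNL, in the form $\partial_\xi\bs\lambda_1[\xi]\ge c$, makes $\bs\chi[\hat\xi]$ with $\hat\xi>\xi$ exit across the curve. Nothing enters through $\gamma_b$, since $\bs\Phi^+(\hat\xi)=0$. A bounded nonnegative mass would then be drained over an unbounded time interval, which forces $\esssup w\le w^\star$.

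The reverse bound needs a harder fact: epigraph curves with $\xi<\min\{w^\star,\underline w+\bar r\}$ also avoid the boundary, so $\wt{\bs\Phi}^+(\xi)=0$ there (Proposition \ref{prop:lowertracepre}, Corollary \ref{prop:lowertrace}). Identifying the constant as $w^\star$, read off the trace, is what makes the blow-up limit unique.

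\textbf{Two smaller points.}
\begin{enumerate}
\item The claim $|\mu_\eta|(B_r(t_0,\gamma(t_0)))=o(r)$ for a.e.\ $t_0$ is false in general. If $\gamma$ is a shock with nonzero dissipation, it fails for every $t_0$ in an interval. You only have, and only need, $o(r)$ on the open half-balls $B_r^\pm$ (Proposition \ref{prop:besi}).
\item You never use the uniformly convex entropy $\eta_0$. It is what makes $\mu_\eta$ a Radon measure for the non-convex entropies you rely on, via $\eta=(\eta+K\eta_0)-K\eta_0$.
\end{enumerate}
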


Theorem \ref{thm:main} is the consequence of a more general result, valid for more general \emph{finite entropy solutions}.

\begin{defi}\label{defi:fes}
We say that a weak solution of \eqref{eq:systemi} $\bs u$ is a \emph{finite entropy solution} to \eqref{eq:systemi} if \begin{equation}\label{eq:feseta}
\mu_{\eta} =\eta(\bs u)_t + q(\bs u)_x \in \msc M(\Omega)
\end{equation}for every entropy-entropy flux pair of $C^2$ function $(\eta, q): \mc U \subset \mathbb R^2 \to \mathbb R$, where $\msc M(\Omega)$ is the space of Radon measures on $\Omega$.
\end{defi}

We also note in passing that the more general notion of finite entropy solution plays an important role in the theory of systems of conservation laws, particularly in connection with physically relevant limits. For instance, viscous approximations of the Euler system are known to converge, a priori, only to finite entropy solutions, which additionally dissipate the convex entropy associated with the energy. We refer to \cite{CP10} for further details on this topic.
Since no additional difficulties arise in proving the main results in this more general framework, it is natural to formulate them in this setting. \vspace{0.2cm}

\begin{thm}\label{thm:mainfes}
   Assume that the system \eqref{eq:systemi} is strictly hyperbolic and \textbf{GNL}. Let $\bs u \in \mathbf L^\infty(\Omega, \; \mc U)$ be a finite entropy solution of \eqref{eq:systemi}. Then $\bs u$ admits pointwise traces on Lipschitz curves in the sense of Definition \ref{defi:strongt}.
\end{thm}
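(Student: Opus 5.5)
The plan follows the blow-up strategy of \cite{Vas01}, adapted to the kinetic formulation of $2\times 2$ systems written in Riemann invariants $(w,z)=(\phi_1(\bs u),\phi_2(\bs u))$.

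\textbf{Step 1: kinetic formulation.} Fix a Lipschitz curve $\gamma$ and straighten it through the bi-Lipschitz map $(t,x)\mapsto(t,x-\gamma(t))$. Using Lax-type entropies (or the entropies $\eta$ with $\nabla\eta\cdot r_2=0$ concentrated at level $v$), I will write a kinetic formulation for the indicators $\chi_1(v;t,x)=\mathbf 1_{v<w(t,x)}$ and $\chi_2(v;t,x)=\mathbf 1_{v<z(t,x)}$. Each is transported with a speed $\lambda_i$ that depends on the other Riemann invariant, which is the non-local structure mentioned in the introduction. The finite entropy assumption \eqref{eq:feseta} then gives kinetic equations
\[
\partial_t\chi_i+\partial_x\big(\Lambda_i(v,\bs u)\chi_i\big)=\partial_v m_i+\text{(coupling terms)}
\]
with locally finite measures $m_i$.

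\textbf{Step 2: good points.} I will select $\mathscr L^1$-a.e. $t$ such that three conditions hold. First, $t$ is a Lebesgue point of $\dot\gamma$. Second, the total variation of the rescaled measures $|m_i|(B_r(t,\gamma(t)))/r\to0$; this follows by a differentiation argument, since the projection of $|m_i|$ on the time axis is singular only on a null set. Third, $t$ is a Lebesgue point for the weak normal kinetic traces in the sense of \cite{CT05}. For these normal traces I will use the divergence-measure structure of $(\chi_i,\Lambda_i\chi_i)$ tested against $v$-functions.

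\textbf{Step 3: blow-up.} Set $\bs u_r(s,y)=\bs u(t+rs,\gamma(t)+ry)$. By compensated compactness for \textbf{GNL} $2\times2$ systems \cite{DiP83a,Tar79}, every sequence $r_k\to0$ has a subsequence with $\bs u_{r_k}\to\bs u_\infty$ strongly in $\mathbf L^1_{loc}$. Strong convergence, rather than mere Young-measure convergence, uses \textbf{GNL}, which forces the Young measures to be Dirac masses. The limit solves the system on both half-planes $\{y\gtrless \dot\gamma(t)s\}$. Its entropy measures vanish by Step 2, so it is \emph{isentropic}. By Step 2 its normal kinetic traces on the line are constant, and they equal those of $\bs u$ at $t$.

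\textbf{Step 4: Liouville theorem (main obstacle).} I will show that an isentropic $\mathbf L^\infty$ solution on a half-plane with constant normal kinetic traces is constant. In Riemann invariants, an isentropic solution of a \textbf{GNL} system should have $w$ and $z$ with no entropy production, hence continuous: no shocks, and rarefactions are allowed. I would try a characteristic or Lax–Oleinik-type argument for each invariant, together with a kinetic averaging/Liouville argument in the spirit of \cite{Vas01}. Scale invariance makes $\bs u_\infty$ constant along rays only after a further blow-down, so I would combine Step 3 with the transport equations for the $\chi_i$. The goal is to show that the constant boundary flux of $\chi_i$, tested against all $v$, forces $\chi_i$ to be independent of the normal variable. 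This needs \textbf{GNL}, since otherwise non-constant traveling profiles with the same traces could exist. Handling the coupling between $w$ and $z$ in the kinetic speeds is where I expect most of the work to lie.

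\textbf{Step 5: conclusion.} Once Step 4 holds, the blow-up limit is unique: it is the constant determined by the normal traces on each side. Hence the whole family $\bs u_r$ converges, which is exactly \eqref{eq:blowuptrace} with $\bs u^\pm(t)$ equal to these constants. Borel measurability of $\bs u^\pm$ follows by writing them as limits of averages. Theorem \ref{thm:main} then follows from Theorem \ref{thm:mainfes}, since entropy solutions with a uniformly convex entropy are finite entropy solutions by the standard Lax-entropy argument.
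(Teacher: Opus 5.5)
\textbf{Gap in Step~4.} Your architecture matches the paper's: kinetic normal traces, good points, compensated compactness for the blow-ups, a half-space Liouville theorem, and uniqueness of the limit. But Step~4, which carries essentially all of the difficulty, contains no mechanism that would work.

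The tools you list depend on scalar structure. Vasseur's argument uses that $\partial_t\chi+a(\xi)\partial_x\chi=0$ transports $\chi$ along explicit straight lines issuing from the boundary. Averaging lemmas likewise use a kinetic velocity that does not depend on the unknown. Here the speed $\bs\lambda_1[\xi](\bs u)$ depends on the full unknown state, so neither is available. Your claim ``isentropic hence continuous'' is also unproved for $\mathbf L^\infty$ solutions of systems.

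The missing idea is Lagrangian.
\begin{itemize}
\item For $\xi$ within $\bar r$ of $\overline w=\esssup w$, Proposition~\ref{prop:localspeed} gives $\bs\chi[\xi](\bs u)$ a sign. The Lipschitz Hamiltonian $\br H[\xi]$, with $\nabla_{t,x}\br H[\xi]=(-\bs\psi[\xi](\bs u),\bs\chi[\xi](\bs u))$, is then monotone in $x$. Its level sets are ordered Lipschitz graphs, characteristic for $\bs\lambda_1[\xi](\bs u)$, and they superpose to $\bs\chi[\xi](\bs u)$.
\item If in addition $\xi>w^\star:=\esssup\{\bs\Phi^+\neq0\}$, so there is no boundary flux, a balance between two such curves shows they never reach $\gamma_b$ (Proposition~\ref{lemma:highcurves}).
\item For $\hat\xi>\xi$, genuine nonlinearity in the form $\partial_\xi\bs\lambda_1[\xi]\ge c$ controls the $\bs\chi[\hat\xi]$-flux across such a curve from below by a multiple of $\hat\xi-\xi$. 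The paper uses this to force the $\bs\chi[\hat\xi]$-mass between $\gamma_b$ and the curve to decrease linearly for all times, which is impossible. Hence $\esssup w\le w^\star$.
\item The reverse inequality needs the epigraph entropies and the non-obvious fact that $\wt{\bs\Phi}^+(\xi)=0$ for $\xi<\min\{w^\star,\underline w+\bar r\}$ (Proposition~\ref{prop:lowertracepre}, Corollary~\ref{prop:lowertrace}). This is not automatic: without strong traces, the weak traces of the hypograph and epigraph families need not be compatible.
\end{itemize}

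\textbf{Errors in Steps~1 and~2.} In Step~2, the claim $|m_i|(B_r(t,\gamma(t)))/r\to0$ on \emph{full} balls is false in general. If $\gamma$ is a shock curve, the dissipation contains $c(t)\,\mathscr H^1\llcorner\mathrm{Graph}(\gamma)$ with $c\neq0$ on a set of positive measure. Its time projection then has positive density there, and the ratio tends to a nonzero limit. So your justification via the time projection does not work. What holds (Proposition~\ref{prop:besi}) is the statement on the open half balls $B_r^\pm$. It is obtained by discarding $\bs\nu\llcorner\mathrm{Graph}(\gamma)$ and differentiating with respect to $\mathscr H^1\llcorner\mathrm{Graph}(\gamma)$. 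This is exactly why the blow-up is isentropic only on each open half-plane, and why a half-space Liouville theorem is needed.

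In Step~1, bare indicators $\mathbf 1_{\{v<w\}}$ with unspecified ``coupling terms'' do not satisfy a conservative kinetic equation for $\mathbf L^\infty$ solutions. The paper instead uses the Perthame--Tzavaras entropies $\bs\chi[\xi]=\bs\Theta[\xi]\mathbf 1_{\{w\ge\xi\}}$, which are exact entropy--flux pairs. No coupling appears, and $(\bs\chi[\xi],\bs\psi[\xi])(\bs u)$ has a well-defined normal trace.
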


We recall that entropy solutions are also finite entropy solutions under the assumption of the existence of a uniformly convex entropy, see e.g. \cite[Proposition 2.3]{Tal26}. Therefore one immediately sees how Theorem \ref{thm:main} is a consequence of Theorem \ref{thm:mainfes}. Notice also that in Theorem \ref{thm:mainfes} the existence of the uniformly convex entropy is not required, since this assumption is only used in Theorem \ref{thm:main} to deduce that entropy solutions are in particular also finite entropy solutions. 

\vspace{0.2cm}
The following theorem shows that the traces are attained in the stronger sense of Definition \ref{defi:strongL1t} if the system \eqref{eq:systemi} is \emph{uniformly strictly hyperbolic},
that is if the eigenvalues $\lambda_1, \lambda_2$ satisfy the following assumption:
\begin{equation}\label{eq:globhyp}
\sup_{\bs u \in \mc U}\; \lambda_1 (\bs u)  < \inf_{\bs u\in \mc U} \; \lambda_2(\bs u).    
\end{equation}

\begin{thm}\label{thm:main1}
      Assume that the system \eqref{eq:systemi} is uniformly strictly hyperbolic and \textbf{GNL}.
      Let $\bs u \in \mathbf L^\infty(\Omega, \; \mc U)$ be an  entropy solution of \eqref{eq:systemi} and assume that there exists a uniformly convex entropy. Then $\bs u$ admits $\mathbf L^1$ traces on Lipschitz curves in the sense of Definition \ref{defi:strongL1t}.
\end{thm}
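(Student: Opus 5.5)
We derive Theorem \ref{thm:main1} from the pointwise traces of Theorem \ref{thm:main}, using uniform strict hyperbolicity to upgrade blow-up convergence to $\mathbf L^1$ convergence along translates of $\gamma$. Fix $\gamma$ Lipschitz with constant $L$, and let $\bs u^\pm$ be the pointwise traces from Theorem \ref{thm:main}.

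\textbf{Step 1: a weak $\mathbf L^1$ limit along translates.} We pass to the coordinates $(t,y)\mapsto (t,\gamma(t)+y)$. In these coordinates the family $y\mapsto \bs u(\cdot,\gamma(\cdot)+y)$, or the corresponding kinetic/entropy quantities $\eta(\bs u(\cdot,\gamma(\cdot)+y))$, has a weak-$*$ limit as $y\to0^\pm$. Indeed, the entropy equation $\partial_t\eta(\bs u)+\partial_x q(\bs u)=\mu_\eta$ with $\mu_\eta$ a measure gives, after the change of variables, that
\[
y\mapsto \int_0^T \varphi(t)\bigl(q(\bs u)-\dot\gamma\,\eta(\bs u)\bigr)(t,\gamma(t)+y)\dif t
\]
is $BV$ in $y$. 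This is the normal trace construction of \cite{CT05}, and it yields one-sided limits for every entropy pair.

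\textbf{Step 2: identify the weak limits with the pointwise traces.} By Theorem \ref{thm:main}, for a.e. $t$ the rescalings converge strongly in $\mathbf L^1$ of the half ball to the constant $\bs u^+(t)$. Combined with Lebesgue differentiation in $t$, this shows that any Young measure limit $\nu_t$ of $\bs u(t,\gamma(t)+y_k)$ as $y_k\to0^+$ satisfies the following: for every entropy pair,
\[
\langle \nu_t, q-\dot\gamma\eta\rangle = q(\bs u^+(t))-\dot\gamma\,\eta(\bs u^+(t)).
\]
Taking $\eta$ to range over all $C^2$ entropies is the key point where the argument needs work. From Steps 1 and 2 we get that the normal flux of every entropy is "frozen" at $\bs u^+(t)$.

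\textbf{Step 3: the main obstacle, upgrading to strong convergence.} We must show that $\nu_t=\delta_{\bs u^+(t)}$, since this gives strong $\mathbf L^1$ convergence uniformly in $y\in(0,\delta)$ along every sequence, hence the esssup statement. In general, a normal flux $q-\dot\gamma\eta$ that is constant in the mean does not force a Dirac mass, and this is where \eqref{eq:globhyp} enters. Take $\eta$ uniformly convex and use the relative entropy $\eta(\bs u\,|\,\bs u^+)$ with flux $q(\bs u\,|\,\bs u^+)$. The identity
\[
\langle \nu_t, q(\cdot\,|\,\bs u^+)-\dot\gamma\,\eta(\cdot\,|\,\bs u^+)\rangle=0
\]
follows from Step 2, because relative entropies are entropies up to affine terms, and the flux identity handles the affine part. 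The plan is then to show that for $|\dot\gamma|\le L$ the quantity $q(\bs u|\bs v)-s\,\eta(\bs u|\bs v)$ has a sign that controls $|\bs u-\bs v|^2$. This is not available uniformly in general. We therefore split according to whether $\dot\gamma(t)$ lies below $\sup\lambda_1$, above $\inf\lambda_2$, or in the gap between them. The gap is nonempty by \eqref{eq:globhyp}, and in that regime the curve is noncharacteristic for both families.

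In the gap we would use Lax-type entropies $\eta_k$ built from Riemann invariants (existing for $2\times2$ systems). For $k\to\pm\infty$, the normalized flux of such an entropy concentrates on the maximum or minimum of $w$ or $z$, with weight $\lambda_i-\dot\gamma$ of fixed sign. Matching these for all $k$ forces the support of $\nu_t$ in each Riemann invariant to collapse to $w^+(t),z^+(t)$. Outside the gap, a single eigenvalue crosses $\dot\gamma$, and the same argument works using only one family plus the GNL condition. I expect this collapse of the Young measure, handled uniformly for a.e. $t$, to be the hardest step. The last point is to check that the set of $t$ where the argument fails has measure zero, which gives the $\mathbf L^1$ convergence after dominated convergence in $t$.
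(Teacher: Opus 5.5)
Your architecture is the paper's. You take weak limits of the normal entropy fluxes along translates of $\gamma$ (Proposition~\ref{prop:normallimit}). You identify them with the fluxes at $\bs u^+(t)$ through the averaged convergence that the pointwise traces give after Fubini and Lebesgue differentiation in $t$ (Proposition~\ref{coro:main} and Step~1 of the proof of Theorem~\ref{thm:mainfes1}). You then reduce every Young measure $\nu_t$ to $\delta_{\bs u^+(t)}$, using \eqref{eq:globhyp}, as in Lemma~\ref{lemma:alphaisdelta}, only to guarantee that at most one of $\lambda_1-\dot\gamma$, $\lambda_2-\dot\gamma$ changes sign on $\mc U$. (Step~1 needs $\mu_\eta$ to be a measure also for non-convex entropies such as Lax's; that is where the uniformly convex entropy is used.) The difference is the reduction tool. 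You use Lax entropies $\eta_k=e^{kw}(g+O(1/k))$ with $k\to\pm\infty$; the paper uses the exactly localized kinetic entropies $\bs\chi[\xi]$, $\wt{\bs\chi}[\xi]$, $\bs\upsilon[\zeta]$, $\wt{\bs\upsilon}[\zeta]$. In the gap $\sup\lambda_1<\dot\gamma<\inf\lambda_2$ your argument is correct (DiPerna-type). Both weights are bounded away from zero, so $\frac1k\log|\langle\nu_t,q_k-\dot\gamma\eta_k\rangle|$ recovers the extremes of $w$ (and of $z$) on $\mathrm{supp}\,\nu_t$.

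The gap is the claim that outside this regime ``the same argument works using only one family plus the GNL condition.'' This is the only delicate point of the reduction, and it is not automatic. Write $\bs u^+(t)=(\bar w,\bar z)$ and $s=\dot\gamma(t)$. Collapsing the non-crossing invariant to $\bar z$ is fine, but on $\{z=\bar z\}$ the weight $\lambda_1(\cdot,\bar z)-s$ vanishes at some $w_0$. Suppose $w_0$ is an endpoint of the $w$-support of $\nu_t$ and $w_0\neq\bar w$. Then the leading coefficient $(\lambda_1-s)g$ of $e^{-kw}(q_k-s\eta_k)$ vanishes exactly where $e^{kw}$ concentrates. The $O(1/k)\,e^{kw_0}$ remainder, whose sign you do not control, can exceed the contribution of the rest of the support, so the leading-order asymptotics no longer locate the endpoint.

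One fix is a two-term expansion, $e^{-kw}(q_k-s\eta_k)=(\lambda_1-s)(a_0+a_1/k)-\partial_w\lambda_1\,a_0/k+O(k^{-2})$ with $a_0=g>0$. By GNL, the $1/k$ term then has the same sign as the main term on the side of $w_0$ where the support lies; this needs more smoothness of $f$.

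The paper's route avoids asymptotics. The normal flux of $\bs\chi[\xi]$ is supported in $\{w\geq\xi\}$, and near $w=\xi$ it has the sign of $\lambda_1(\xi,\bar z)-s$. When $w_0<\bar w$, this shows that $w\le\bar w$ on $\mathrm{supp}\,\nu_t$. Letting $\xi\uparrow\bar w$ in \eqref{eq:alphaisdeltawH} then forces $\nu_t(\{\bar{\bs u}\})=1$, because $\lambda_1(\bar{\bs u})\neq s$; the case $w_0>\bar w$ is symmetric with $\wt{\bs\chi}$.

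Equivalently, $\eta$ can be prescribed freely on $\{z=\bar z\}$ (Goursat problem), and there $\partial_w(q-s\eta)=(\lambda_1-s)\,\partial_w\eta$. This gives $\nu_t(\{w>a\})=\mathbf 1_{\{\bar w>a\}}$ for every $a\neq w_0$, hence $\nu_t=\delta_{\bs u^+(t)}$.
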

Again the Theorem can be proved for more general finite entropy solutions.

\begin{thm}\label{thm:mainfes1}
   Assume that the system \eqref{eq:systemi} is uniformly strictly hyperbolic and \textbf{GNL}.
   Let $\bs u \in \mathbf L^\infty(\Omega, \; \mc U)$ be a finite entropy solution of \eqref{eq:systemi}. Then $\bs u$ admits $\mathbf L^1$ traces on Lipschitz curves in the sense of Definition \ref{defi:strongL1t}.
\end{thm}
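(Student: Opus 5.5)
Assuming Theorem \ref{thm:mainfes}, the plan is to upgrade the pointwise traces to $\mathbf L^1$ traces. This mirrors the scalar strategy of \cite{Vas01}, where the Lebesgue-point blow-up information is combined with a uniform-in-time equicontinuity estimate in the transversal variable. I would first straighten the curve by the Lipschitz change of variables $(t,x)\mapsto(t, x-\gamma(t))$. The system keeps divergence form, with the flux in the $x$-direction replaced by $f(\bs u)-\dot\gamma(t)\bs u$. Finite-entropy measures are preserved, but the transformed system now depends measurably on $t$ through $\dot\gamma$. Theorem \ref{thm:mainfes} gives, for a.e.\ $t$, pointwise traces $\bs u^\pm(t)$ along $\{y=0\}$.

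\textbf{Step 1: averaged $\mathbf L^1$ convergence.} I would show that
\[
\frac1\delta\int_0^\delta\int_0^T|\bs u(t,\gamma(t)+y)-\bs u^+(t)|\dif t\dif y\to0 .
\]
For this, cover $[0,T]$ by intervals of length $\sim r$ around good times. By the Vitali covering lemma together with \eqref{eq:blowuptrace}, and using that $\bs u^+$ is Borel and bounded (so it can be approximated in $\mathbf L^1$ by continuous functions via Lusin), the integral over the strip $(0,T)\times(0,\delta)$ is $o(\delta)$. Dominated convergence in $t$ applies because everything is bounded.

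\textbf{Step 2: control of the $\esssup$ in $y$.} This is the harder step. I would test the divergence-form equation $\partial_t \eta(\bs u) + \partial_y(q(\bs u)-\dot\gamma\eta(\bs u)) = \mu_\eta$ against $\psi(t)\chi_{(y_1,y_2)}(y)$. This gives, for a.e.\ $0<y_1<y_2<\delta$,
\[
\int_0^T\psi(t)\big[(q-\dot\gamma \eta)(\bs u(t,y_2))-(q-\dot\gamma\eta)(\bs u(t,y_1))\big]\dif t = O(|\mu_\eta|((0,T)\times(y_1,y_2)))+O(\delta),
\]
and the right-hand side tends to zero uniformly as $\delta\to0$. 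Averaging in $y_1$ against Step 1 then shows that the normal fluxes $G_\eta(t,\bs v):=q(\bs v)-\dot\gamma(t)\eta(\bs v)$ converge weakly in $t$, uniformly in $y\in(0,\delta)$, to $G_\eta(t,\bs u^+(t))$. Applying the same argument to $\eta$ weighted by $\bs u$, or to a strictly convex entropy, yields convergence of $\int (G_\eta(\bs u(t,y))-G_\eta(\bs u^+))\dif t$. To upgrade weak convergence to strong convergence I would use a relative-entropy argument. One takes $\eta$ uniformly convex (Lax entropies exist for $2\times2$ GNL systems, by \cite[Chapter 12]{Daf16}) and considers the relative normal flux $G_\eta(\bs v|\bs u^+)$. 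Uniform strict hyperbolicity \eqref{eq:globhyp} is exactly what should make the map $\bs v\mapsto (\text{normal fluxes of a family of entropies})$ injective, with quantitative control $|\bs v-\bs u^+|^2\lesssim$ a combination of relative normal entropy fluxes. The reason is that the speed $\dot\gamma(t)$ cannot coincide simultaneously with both characteristic speeds, so one of the Riemann-invariant directions is always nondegenerate. I expect this step to be the main obstacle. One must choose a family of entropies (in Riemann invariants, e.g.\ those built from $w,z$ via Lax's construction) whose relative fluxes are coercive whether $\dot\gamma$ lies below, between, or above the eigenvalue ranges, and handle the time-dependence of $\dot\gamma$. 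Splitting $[0,T]$ into the three sets where $\dot\gamma<\sup\lambda_1$, $\dot\gamma>\inf\lambda_2$, and the gap between them should make this possible.

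\textbf{Step 3: conclusion.} Combining Steps 1 and 2 gives $\esssup_{y\in(0,\delta)}\int_0^T|\bs u(t,\gamma(t)+y)-\bs u^+(t)|\dif t\to0$. The $-$ side is handled identically, which completes the proof.
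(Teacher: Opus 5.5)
Your Step 1 and the first half of Step 2 are sound and coincide with the paper. Step 1 is Proposition \ref{coro:main}. The weak$^\ast$ convergence of the normal entropy fluxes $G_\eta(t,\bs u(t,\gamma(t)+y))\rightharpoonup G_\eta(t,\bs u^+(t))$ as $y\to0^+$ (for $y$ in a full-measure set) is Proposition \ref{prop:normallimit} combined with Step 1 of the paper's proof.

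The gap is the passage from weak to strong convergence. The estimate you hope for, $|\bs v-\bs u^+|^2\lesssim$ (a combination of relative normal entropy fluxes), is false in general. For every entropy pair the matrix $\nabla^2\eta\,Df$ is symmetric, so strict hyperbolicity gives $r_i^T\nabla^2\eta\,r_j=0$ for $i\neq j$. Writing $\bs v-\bs u^+=a_1r_1+a_2r_2$ with eigenvectors taken at $\bs u^+$, the relative normal flux therefore expands as
\[
G_\eta(\bs v|\bs u^+)=\tfrac12\sum_{i=1}^{2}\big(r_i^T\nabla^2\eta(\bs u^+)\,r_i\big)\big(\lambda_i(\bs u^+)-\dot\gamma\big)a_i^2+O\big(|\bs v-\bs u^+|^3\big).
\]
At any time with $\dot\gamma(t)=\lambda_1(\bs u^+(t))$, every relative normal flux vanishes to third order along $r_1(\bs u^+(t))$. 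No combination of them can then control $|\bs v-\bs u^+|^2$.

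Assumption \eqref{eq:globhyp} only prevents both families from being characteristic at the same time. It does not prevent $\dot\gamma(t)=\lambda_1(\bs u^+(t))$ on a set of times of positive measure; this already happens for a constant solution with $\gamma$ a $1$-characteristic line. So your three-regime splitting does not remove the problem.

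In the gap regime $\sup\lambda_1<\dot\gamma<\inf\lambda_2$ the quadratic form above is indefinite for every convex $\eta$. You would need non-convex entropies whose relative normal flux is coercive on all of $\mc U$, uniformly in the base point $\bs u^+(t)$, and the proposal does not construct them. In addition, Theorem \ref{thm:mainfes1} does not assume a uniformly convex entropy, so that ingredient is not available either.

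What is actually needed is not injectivity of $\bs v\mapsto(G_\eta(\bs v))_\eta$, which would only compare Dirac masses. One needs that no probability measure other than $\delta_{\bs u^+(t)}$ reproduces the normal fluxes of $\bs u^+(t)$, and this is what the paper proves. Along an arbitrary sequence $\wt\delta_n\to0^+$, take the Young measure $\alpha_t$ generated by $\bs u(\cdot,\gamma(\cdot)+\wt\delta_n)$. The weak limits of the kinetic normal fluxes give, for a.e. $t$ and a.e. $\xi$,
\[
\int_{\mc U}\big(\bs\chi[\xi](\bs q),\bs\psi[\xi](\bs q)\big)\cdot\vec n\,\dif\alpha_t(\bs q)=\big(\bs\chi[\xi](\bs u^+(t)),\bs\psi[\xi](\bs u^+(t))\big)\cdot\vec n,
\]
and likewise for $\wt{\bs\chi}$, $\bs\upsilon$ and $\wt{\bs\upsilon}$.

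Lemma \ref{lemma:alphaisdelta} then shows $\alpha_t=\delta_{\bs u^+(t)}$ using two ingredients only:
\begin{itemize}
\item the one-sided supports of the kinetic entropies, so that the right-hand side vanishes for $\xi>\bar w$ and $\zeta>\bar z$, and symmetrically for the tilde families;
\item the sign of the normal kinetic flux near the cut, which is the sign of $(1,\lambda_i)\cdot\vec n$.
\end{itemize}
By \eqref{eq:globhyp}, one of these, say $(1,\lambda_2)\cdot\vec n$, is uniformly signed, which confines $\mathrm{supp}\,\alpha_t$ to the line $\{z=\bar z\}$. On that line genuine nonlinearity leaves at most one zero of $(1,\lambda_1)\cdot\vec n$. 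A one-sided support argument together with the limit $\xi\to\bar w^{\mp}$ finishes the proof.

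Because the kinetic entropies are cut exactly at the trace value and only signs near the cut are used, this argument survives at characteristic times, where any smooth relative-entropy bound degenerates. Strong convergence along every subsequence then gives the $\esssup$ statement of Definition \ref{defi:strongL1t}.
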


\subsection{Structure of paper}
The paper is structured as follows. 

\vspace{0.5cm}
In Section \ref{sec:entropies} we introduce some preliminaries, in particular a description of the structure of entropies as presented in \cite{PT00}.

\vspace{0.2cm}
In Section \ref{sec:weaktrace} we discuss weak normal trace properties of finite entropy solutions. This topic is well studied, see e.g. \cite{CT05}, and for the sake of completeness here we reformulate and prove these results adapted to best fit our setting.

\vspace{0.2cm}
In Section \ref{sec:hamlag} we prove the existence of Lagrangian representations (i.e. characteristics) for the kinetic equations satisfied by an isentropic solution, i.e. a weak solution to \eqref{eq:systemi} with vanishing entropy production. In order to make the paper self contained and to easily derive additional useful properties, we derive from scratch the existence of the Lagrangian representation using a method based on level sets of Lipschitz Hamiltonians. A similar representation was used in \cite{AMT25}; however, in the present setting an additional difficulty arises due to the presence of the boundary.

\vspace{0.2cm}
Section \ref{sec:HSL} is the core of the paper, where we prove a half-space Liouville-type theorem for isentropic solutions in a half-space of the form $\{(t,x)\, | \;  x \gtrless \alpha\, t\}$, under the assumption of a constant weak normal trace on the boundary hyperplane, Theorem \ref{thm:Liuv1}. In particular, we show that such solutions must be constant, and that the constant is uniquely determined by the weak normal trace. This will yield a proof of Theorems \ref{thm:main}, \ref{thm:mainfes}. A Liouville-type theorem in $\mathbb{R}^2$ was obtained in \cite{AMT25}. In the present work, the situation is more involved: we must prove a corresponding result in a half-space and, moreover, show that the constant solution is uniquely determined by the kinetic normal trace, a key ingredient in the proof of the trace result, Theorem~\ref{thm:main}.

\vspace{0.2cm}
In Section \ref{sec:PtoL1} we show how Theorems \ref{thm:main1}, \ref{thm:mainfes1} follow from the results of Section \ref{sec:HSL}, in particular from Theorems \ref{thm:main}, \ref{thm:mainfes}. The argument relies on soft analytic tools and on a reduction of a certain Young measure to a Dirac mass. It is only at this point that the stronger assumption \eqref{eq:globhyp} comes into play.

\section{Preliminaries and entropies of \texorpdfstring{$2\times 2$}{2x2} systems} \label{sec:entropies}

We begin by recalling some structural results for the entropies of $2\times 2$ systems mainly from \cite{PT00}.
It is well known that systems of $2\times 2$ conservation laws admit infinite families of entropies. Aim of this section is to describe how all smooth entropies can be obtained as a superposition of \say{elementary} irregular entropies. These elementary entropies are of two types (associated respectively to the first and second Riemann invariants $w, z$) and solve the entropy equation 
\begin{equation}\label{e:wentropy}
\nabla q(\bs u) \, - \, \mr D f (\bs u) \, \nabla \eta(\bs u) = 0 \qquad \text{in $\msc D^\prime (\mc U)$}.   
\end{equation}
only distributionally, since they are discontinuous along lines $\{w = \xi\}$ and $\{z = \zeta\}$ respectively, where $\xi \in (\underline w, \overline w)$, $\zeta \in (\underline z, \overline z)$ are fixed. We recall that a standard computation shows (see e.g. \cite{Ser00}) that a smooth function $\eta : \mc U \to \mathbb R$ is an entropy of the system (i.e. there exists $q$ for which \eqref{e:wentropy} holds) if and only if it satisfies, in the Riemann invariant coordinates,
$$
\eta_{wz} = \frac{g_z}{g}\eta_w + \frac{h_w}{h} \eta_z \qquad \text{in $\mc  W$}
$$
where 
\begin{equation}\label{eq:ghdef}
\begin{aligned}
g(w, z) & = \exp\left[\int_{{\ubar z}}^{ z} -\frac{\lambda_{1z}(w, y)}{\lambda_1(w, y)-\lambda_2(w, y)}\dif y  \right] \\
h(w, z) & = \exp\left[\int_{{\ubar w}}^{w} \frac{\lambda_{2w}(y, z)}{\lambda_1(y, z)-\lambda_2(y, z)}\dif y  \right]
\end{aligned}
\end{equation}
where $\lambda_i$ are the eigenvalues of $Df$, as functions of the Riemann invariants $w,z$.
In the following we describe the construction of discontinuous entropies related to $w$, the case of discontinuous entropies related to $z$ is be entirely symmetric.
Let $g(w,z)$ be defined by

\begin{defi}
Let $\xi \in (\underline w, \overline w)$ be fixed. We denote by $\bs \Theta[\xi]: \mc U \to \mathbb R$ the entropy constructed as the unique solution to the Goursat-boundary value problem 
\begin{equation}\label{eq:goursat}
\begin{cases}
\bs \Theta[\xi]_{wz} = \frac{g_z}{g}\bs \Theta[\xi]_w + \frac{h_w}{h} \bs \Theta[\xi]_z, & \text{in} \; \mc W\\
\\
\bs \Theta[\xi](w, {\ubar z}) = 1, & \forall \; w \in [{\ubar w}, \bar w] \\
\\
\bs \Theta[\xi](\xi, z) = g(\xi, z) & \forall \; z \in [{\ubar z}, \bar z].
\end{cases}
\end{equation}
\end{defi}
The existence of a unique, smooth solution $\bs \Theta[\xi]$ to the above boundary value problem is standard; for a proof of this fact, see e.g. \cite[Section 9.3]{Ser00}. The solution is as smooth as the initial data and the coefficients, and moreover 
$$
w, z, \xi \mapsto \bs \Theta[\xi](w,z)
$$
is also smooth as a function of three variables $w, z, \xi$.

\vspace{0.5cm}
We label $\bs \Xi[\xi]$ the entropy flux of $\bs \Theta[\xi]$.
Up to an additive constant in the entropy flux we can assume that (see \cite{AMT25} or \cite{PT00})
\begin{equation}\label{eq:efluxxi}
\bs \Xi[\xi](\xi, z) = \lambda_1(\xi, z) \bs \Theta[\xi](\xi, z) \qquad \forall \; z \in [{\ubar z}, \bar z].
\end{equation}
We finally define the discontinuous entropies related to the hypograph/epigraph:
\begin{equation}\label{e:chir}
\bs \chi[\xi]  \doteq \bs \Theta[\xi]\cdot  \mathbf 1_{\{w \geq \xi\}} , \qquad \wt{\bs \chi}[\xi]  \doteq \bs \Theta[\xi]\cdot  \mathbf 1_{\{w \leq \xi\}} 
\end{equation}
and entropy fluxes
\begin{equation}\label{eq:fluxes}
\bs \psi[\xi] \doteq \bs \Xi[\xi]\cdot  \mathbf 1_{\{w \geq \xi\}} \qquad \wt{\bs \psi}[\xi] \doteq \bs \Xi[\xi]\cdot  \mathbf 1_{\{w \leq \xi\}}.
\end{equation}
The specific choice of the value of $\bs \Theta[\xi]$ at the line $\{ w= \xi\}$  in \eqref{eq:goursat} makes it possible to have the following:
\begin{thm}[\cite{PT00}]
    The functions $\bs \chi[\xi], \bs \psi[\xi]$ and $\wt{\bs \chi}[\xi], \wt{\bs \psi}[\xi]$ are entropy-entropy flux pairs, solving \eqref{e:wentropy} in the sense of distributions.
\end{thm}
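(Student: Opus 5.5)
We need to show that $\bs\chi[\xi]$ and $\bs\psi[\xi]$ satisfy \eqref{e:wentropy} in $\msc D'(\mc U)$. The same argument with $\mathbf 1_{\{w\le\xi\}}$ handles the tilde pair. Since $\bs\Theta[\xi],\bs\Xi[\xi]$ are a smooth entropy–entropy flux pair on all of $\mc U$, the product rule for distributions (a smooth function times the derivative of an indicator) reduces everything to a computation on the jump set $\Sigma_\xi=\{w=\xi\}$. This set is a smooth curve, because $\phi_1$ is a diffeomorphism coordinate. We have
\[
\nabla\bs\psi[\xi]-\mr Df^{T}\nabla\bs\chi[\xi]=\mathbf 1_{\{w\ge\xi\}}\big(\nabla\bs\Xi[\xi]-\mr Df^T\nabla\bs\Theta[\xi]\big)+\big(\bs\Xi[\xi]-\mr Df^{T}\bs\Theta[\xi]\big)\nabla\mathbf 1_{\{w\ge\xi\}}.
\]
The first term vanishes pointwise. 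The distributional gradient of the indicator is $\nabla\mathbf 1_{\{w\ge\xi\}}=\nabla\phi_1\,\mc H^1\llcorner\Sigma_\xi/|\nabla\phi_1|$, that is, a measure on $\Sigma_\xi$ in the direction $\ell_1=\nabla\phi_1$. Here I use the transpose convention matching \eqref{eq:entropyeq}, i.e. the row form $\nabla\eta\,\mr Df=\nabla q$.

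It therefore suffices to check that on $\Sigma_\xi$
\[
\bs\Xi[\xi]\,\ell_1-\bs\Theta[\xi]\,\ell_1\mr Df=0 .
\]
Since $\ell_1$ is a left eigenvector, $\ell_1\mr Df=\lambda_1\ell_1$. The condition thus becomes $\bs\Xi[\xi](\xi,z)=\lambda_1(\xi,z)\bs\Theta[\xi](\xi,z)$ for all $z$, which is exactly the normalization \eqref{eq:efluxxi}. This is a Rankine–Hugoniot condition along a characteristic curve of the first family.

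The main step is therefore to justify \eqref{eq:efluxxi}: that one can choose the additive constant in $\bs\Xi$ so that the identity holds for every $z$, not just at one point. To do this I differentiate $F(z)=\bs\Xi[\xi](\xi,z)-\lambda_1\bs\Theta[\xi](\xi,z)$ in $z$. In Riemann coordinates the entropy equations read $q_w=\lambda_1\eta_w$ and $q_z=\lambda_2\eta_z$, since $\partial_w$ and $\partial_z$ correspond to the directions $r_1$ and $r_2$ up to scaling. Hence
\[
F'(z)=\lambda_2\Theta_z-\lambda_{1z}\Theta-\lambda_1\Theta_z=(\lambda_2-\lambda_1)\Theta_z-\lambda_{1z}\Theta .
\]
On $w=\xi$ we have $\Theta=g(\xi,\cdot)$, and by \eqref{eq:ghdef}, $g_z/g=\lambda_{1z}/(\lambda_2-\lambda_1)$. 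Therefore $F'\equiv0$, so $F$ is constant, and subtracting this constant from $\bs\Xi$ gives \eqref{eq:efluxxi}. The smoothness of $\bs\Theta[\xi]$ up to the boundary, taken from the cited Goursat theory, makes all these traces classical.

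For the tilde pair, the indicator gradient has the opposite sign, and the same identity applies.
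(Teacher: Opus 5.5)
Your proof is correct. It follows the mechanism the paper points to: the paper gives no proof of its own, only the citation to \cite{PT00} plus the remarks on the Goursat datum on $\{w=\xi\}$ and the normalization \eqref{eq:efluxxi}. As you show, the product rule reduces the claim to the Rankine--Hugoniot condition $\bs\Xi[\xi]=\lambda_1\bs\Theta[\xi]$ on $\{w=\xi\}$ because $\ell_1$ is a left eigenvector, and your derivation of \eqref{eq:efluxxi} (up to an additive constant) from $\bs\Theta[\xi](\xi,\cdot)=g(\xi,\cdot)$ and $g_z/g=\lambda_{1z}/(\lambda_2-\lambda_1)$ supplies the step the paper takes for granted.
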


A symmetric construction can be made for the entropies that can be cut along the second Riemann invariant; for these entropies, for $\zeta \in [{\ubar z}, \bar z]$, we let $\bs \upsilon[\zeta](w,z)$ be entropy corresponding to $\bs \chi[\xi](w,z)$, and $\bs \varphi[\zeta](w,z)$  the respective entropy flux, corresponding to $\bs \psi[\xi](w,z)$, and similarly for $\wt{\bs \upsilon}[\zeta]$, $\wt{\bs \varphi}[\zeta]$.

\begin{thm}[\cite{PT00}]
Let $\eta, q$ be a $C^k$ entropy-entropy flux pair expressed in Riemann coordinates with $\eta(\ubar w, \ubar z) = q(\ubar w, \ubar z) = 0$.  Then there holds
     \begin{equation}\label{eq:entsup}
    \begin{aligned}
\eta(w,z) & \doteq \int_{\ubar w}^{\bar w} \bs \chi[\xi] (w,z)\varrho_1(\xi) \dif \xi  +  \int_{\ubar z}^{\bar z} \bs \upsilon[\zeta](w,z) \varrho_2(\zeta) \dif \zeta \\
q(w,z)  & \doteq \int_{\ubar w}^{\bar w} \bs \psi[\xi] (w,z)\varrho_1(\xi) \dif \xi  +  \int_{\ubar z}^{\bar z} \bs \varphi[\zeta](w,z) \varrho_2(\zeta) \dif \zeta
    \end{aligned}
    \end{equation}
where
\begin{equation}\label{eq:formularho}
\varrho_1(\xi)  \doteq \eta_w(\xi, \ubar z), \qquad \varrho_2(\zeta) \doteq \eta_z(\ubar w, \zeta)  \in C^{k-1}.
\end{equation}
Conversely, if $\varrho_1, \varrho_2 \in C^{k-1}$ and $\bs \Theta[\cdot](\cdot) \in C^k$, then the functions $\eta, q$ defined through \eqref{eq:entsup} are $C^{k}$ entropy-entropy flux pairs of the system \eqref{eq:systemi}.
\end{thm}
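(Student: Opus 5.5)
The strategy is to match, through a Goursat uniqueness argument, the integral representation \eqref{eq:entsup} against the given pair $(\eta,q)$. Define $\tilde\eta$ by the right-hand side of \eqref{eq:entsup} with $\varrho_1,\varrho_2$ as in \eqref{eq:formularho}. The identity $\tilde\eta=\eta$ will then follow once we show that $\tilde\eta$ solves the same Goursat problem $\eta_{wz}=\frac{g_z}{g}\eta_w+\frac{h_w}{h}\eta_z$ in $\mc W$ with the same data on $\{z=\ubar z\}$ and $\{w=\ubar w\}$, since the Goursat problem has a unique solution.

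\emph{Step 1 (differentiation).} Write
$$\tilde\eta(w,z)=\int_{\ubar w}^{w}\bs\Theta[\xi](w,z)\varrho_1(\xi)\dif\xi+\int_{\ubar z}^{z}\bs\upsilon'[\zeta](w,z)\varrho_2(\zeta)\dif\zeta,$$
where $\bs\upsilon'[\zeta]$ denotes the smooth symmetric counterpart of $\bs\Theta$, whose restriction to $\{z\ge\zeta\}$ is $\bs\upsilon[\zeta]$. Because the integration limits depend on $(w,z)$, differentiation generates boundary terms. We have
$$\tilde\eta_w=\bs\Theta[w](w,z)\varrho_1(w)+\int_{\ubar w}^w\partial_w\bs\Theta[\xi]\,\varrho_1\dif\xi+\int_{\ubar z}^z\partial_w\bs\upsilon'[\zeta]\,\varrho_2\dif\zeta,$$
and $\bs\Theta[w](w,z)=g(w,z)$ by \eqref{eq:goursat}. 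Differentiating once more in $z$ gives $g_z(w,z)\varrho_1(w)$ from the first term and a boundary term $\partial_w\bs\upsilon'[z](w,z)\varrho_2(z)$ from the third.

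\emph{Step 2 (the PDE).} The integral parts satisfy the linear equation because each $\bs\Theta[\xi]$ and each $\bs\upsilon'[\zeta]$ does. It therefore remains to check that the boundary terms balance. In the $\xi$ family this reduces to
$$g_z\varrho_1=\frac{g_z}{g}\,(g\varrho_1),$$
which holds trivially. In the $\zeta$ family the symmetric normalization $\bs\upsilon'[z](w,z)=h(w,z)$ gives $\partial_w\bs\upsilon'[z](w,z)=h_w(w,z)$, and the required identity is $h_w\varrho_2=\frac{h_w}{h}(h\varrho_2)$, again trivial. The cross boundary terms must be handled carefully: the one coming from $\tilde\eta_w$ has to be evaluated on the line $\zeta=z$. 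Note that $\partial_w$ of $\bs\upsilon'[z](w,z)$ is the partial derivative in $w$ at fixed $\zeta=z$, and this equals $h_w$ because $\bs\upsilon'[\zeta](w,\zeta)=h(w,\zeta)$ holds for all $w$.

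\emph{Step 3 (boundary data).} On $z=\ubar z$ we have
$$\tilde\eta(w,\ubar z)=\int_{\ubar w}^w\bs\Theta[\xi](w,\ubar z)\varrho_1\dif\xi=\int_{\ubar w}^w\eta_w(\xi,\ubar z)\dif\xi=\eta(w,\ubar z),$$
using $\bs\Theta[\xi](\cdot,\ubar z)=1$ together with $\eta(\ubar w,\ubar z)=0$. Symmetrically $\tilde\eta(\ubar w,z)=\eta(\ubar w,z)$, using $\bs\upsilon'[\zeta](\ubar w,\cdot)=1$. Goursat uniqueness then gives $\tilde\eta=\eta$.

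\emph{Step 4 (flux and converse).} The flux $\tilde q$ defined by \eqref{eq:entsup} is an entropy flux for $\tilde\eta$. This follows by linearity from the distributional identities of the previous theorem applied to each $(\bs\chi[\xi],\bs\psi[\xi])$; equivalently, the jump terms cancel thanks to \eqref{eq:efluxxi}. Since a flux is determined up to a constant and $\tilde q(\ubar w,\ubar z)=0=q(\ubar w,\ubar z)$, we obtain $\tilde q=q$. For the converse, the same differentiation shows that $\tilde\eta$ gains one derivative over $\varrho_i$ (through the variable limits) while retaining the smoothness of $\bs\Theta$, so $\tilde\eta\in C^k$; the entropy property then follows from Step 2.

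The main obstacle is Step 2, namely the careful bookkeeping of the boundary terms produced by the variable limits and their exact cancellation via the normalizations $\bs\Theta[\xi](\xi,\cdot)=g$ and $\bs\upsilon'[\zeta](\cdot,\zeta)=h$. These normalizations are precisely the reason for the choice made in \eqref{eq:goursat}.
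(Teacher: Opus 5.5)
Your proof is correct. The paper does not prove this statement; it quotes it from \cite{PT00}. Your argument is a complete, self-contained proof along the natural lines, superposition plus Goursat uniqueness. You rewrite the $\xi$- and $\zeta$-integrals with variable upper limits. You check that the boundary terms created by differentiation are absorbed exactly, thanks to $\bs\Theta[\xi](\xi,\cdot)=g(\xi,\cdot)$ and $\bs\upsilon[\zeta](\cdot,\zeta+)=h(\cdot,\zeta)$. You match the Goursat data on $\{z=\ubar z\}$ and $\{w=\ubar w\}$ using $\bs\Theta[\xi](\cdot,\ubar z)=1$ and its $\zeta$-analogue, and you fix the flux constant at $(\ubar w,\ubar z)$.

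A few minor points are worth making explicit:
\begin{itemize}
\item Applying Goursat uniqueness to the given $\eta$ uses $\eta_{wz}$. This requires $k\ge 2$, or an integral formulation of the Goursat problem when $k=1$.
\item The normalization $\bs\upsilon'[\zeta](\ubar w,\cdot)=1$ is part of the ``symmetric construction'' that the paper only alludes to, so it should be stated.
\item In the converse, the $C^k$ regularity of $\tilde q$ is not automatic from that of $\tilde\eta$. It follows from $\tilde q_w=\lambda_1\tilde\eta_w$ and $\tilde q_z=\lambda_2\tilde\eta_z$, or from the same variable-limit differentiation applied to $\bs\Xi$, provided the $\lambda_i$ are correspondingly smooth.
\end{itemize}
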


\begin{remark}
   An analogous result also holds for Lipschitz entropy--entropy flux pairs $(\eta, q)$. Since the restriction of a Lipschitz function to a line $\{z = \bar z\}$ or $\{w = \bar w\}$ is differentiable almost everywhere along that line, the formulas \eqref{eq:formularho} are still well-defined in this setting. In this case, the representation formulas \eqref{eq:entsup} remain valid with $\varrho_1, \varrho_2 \in \mathbf{L}^\infty$.
   Conversely, if $\varrho_i \in \mathbf L^\infty$, then the functions $\eta, q$ defined through \eqref{eq:entsup} are Lipschitz entropy-entropy flux pairs.
\end{remark}

We conclude with a simple result, see e.g. \cite{AMT25}.

\begin{prop}\label{prop:localspeed}
    There exists positive $\bar {r}, c > 0$  such that, for every $\xi, w \in [\ubar w, \bar w]$  and $z \in [\ubar z, \bar z]$ such that $\xi \leq w \leq \xi + \bar r$, the following holds:
\begin{enumerate}
    \item Local strict positivity of the entropies: 
    $$
    \bs \chi[\xi](w, z) \geq c > 0
    $$
    \item If $\lambda_{1}$ is genuinely nonlinear, then we have the monotonicity of the kinetic speed:
$$
\frac{\dif}{\dif \xi} \bs \lambda_1[\xi](w, z) \geq c > 0 
$$
where
\begin{equation}\label{eq:lambdadef}
\bs \lambda_1[\xi](w, z) \doteq \frac{\bs \psi[\xi](w, z)}{\bs \chi[\xi](w, z)}.
\end{equation}
\end{enumerate}
A completely symmetric statement holds for the entropies $\wt{\bs \chi}$ and for  weak entropies related to the second Riemann invariant $z$, $\bs \upsilon[\zeta], \wt{\bs \upsilon}[\zeta]$.
\end{prop}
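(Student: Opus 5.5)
Both claims follow from the boundary data of the Goursat problem \eqref{eq:goursat} together with the normalization \eqref{eq:efluxxi}, combined with uniform continuity on the compact set $\{(\xi,w,z): \xi\le w\}$. The only facts used are that $(\xi,w,z)\mapsto \bs\Theta[\xi](w,z)$ is $C^1$ (indeed smooth) jointly in three variables, and that $\bs\Xi[\xi]$ is likewise jointly $C^1$. The latter holds because the flux is obtained by integrating $\nabla q = \nabla\eta\, \mr D f$ from the line $\{w=\xi\}$, with the initial value \eqref{eq:efluxxi}.

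\emph{Step 1 (positivity).} On the diagonal $w=\xi$ we have $\bs\chi[\xi](\xi,z)=\bs\Theta[\xi](\xi,z)=g(\xi,z)$. By \eqref{eq:ghdef}, $g$ is the exponential of a bounded quantity, so $g\ge 2c_0>0$ on $\mc W$. Since $\bs\Theta$ is uniformly continuous on the compact set, there is $\bar r>0$ such that $\bs\Theta[\xi](w,z)\ge c_0$ whenever $0\le w-\xi\le \bar r$. On $\{w\ge\xi\}$ we have $\bs\chi=\bs\Theta$, which gives (1).

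\emph{Step 2 (monotonicity of the kinetic speed).} For $\xi\le w$, $\bs\lambda_1[\xi]=\bs\Xi[\xi]/\bs\Theta[\xi]$ is $C^1$ in $\xi$, since the denominator is bounded below by Step 1. It therefore suffices to compute $\partial_\xi \bs\lambda_1[\xi](w,z)$ at $w=\xi$, show that it is $\ge 2c$ there, and then shrink $\bar r$ using uniform continuity of $\partial_\xi\bs\lambda_1$. This derivative is taken with $(w,z)$ fixed, evaluated at $\xi=w$; it is not the derivative along the diagonal.

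Along the diagonal, \eqref{eq:efluxxi} gives $\bs\lambda_1[\xi](\xi,z)=\lambda_1(\xi,z)$. Differentiating the identity $F(\xi,\xi,z)=\lambda_1(\xi,z)$, where $F(\xi,w,z)=\bs\lambda_1[\xi](w,z)$, yields
\[
\partial_\xi F(\xi,\xi,z) = \lambda_{1,w}(\xi,z) - \partial_w F(\xi,\xi,z).
\]
Next we compute $\partial_w F$ at $w=\xi$. The entropy relation in Riemann coordinates gives $q_w = \lambda_1 \eta_w$, because $w=\phi_1$ is associated to $\ell_1$ and $\eta_w$ pairs with the eigenvalue $\lambda_1$. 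Hence
\[
\partial_w\Bigl(\frac{\bs\Xi}{\bs\Theta}\Bigr)=\frac{\lambda_1\bs\Theta_w}{\bs\Theta}-\frac{\bs\Xi\,\bs\Theta_w}{\bs\Theta^2}=\frac{\bs\Theta_w}{\bs\Theta}\,(\lambda_1-F).
\]
On the diagonal $F=\lambda_1$, so $\partial_wF(\xi,\xi,z)=0$. Therefore
\[
\partial_\xi \bs\lambda_1[\xi](w,z)\big|_{w=\xi}=\lambda_{1,w}(\xi,z).
\]
Genuine nonlinearity \eqref{eq:gnldef} says $\nabla\lambda_1\cdot r_1>0$ uniformly. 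Since $\partial_w$ in Riemann coordinates is a positive multiple of $r_1$ (as $\ell_2\cdot r_1=0$ and $\ell_1\cdot r_1=1$), this gives $\lambda_{1,w}\ge 2c>0$, up to fixing the orientation of $r_1$ consistently with \eqref{eq:gnldef}. This proves (2) after shrinking $\bar r$.

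The main point to check carefully is the joint $C^1$ regularity in $\xi$ of $\bs\Xi[\xi]$ up to the diagonal, together with the correct sign convention relating $q_w=\lambda_1\eta_w$ to the normalization of $\bs\Theta$ as an entropy. Both follow from the Goursat construction and the smooth dependence on $\xi$ stated above. The symmetric statements, for $\wt{\bs\chi}$ on $\{w\le\xi\}$ and for $\bs\upsilon[\zeta],\wt{\bs\upsilon}[\zeta]$ with $q_z=\lambda_2\eta_z$ and GNL of $\lambda_2$, follow by the same computation.
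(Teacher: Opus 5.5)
Your proof is correct. The paper does not prove this proposition itself; it only refers to \cite{AMT25}, so there is no in-paper argument to compare with. Your continuity-from-the-diagonal argument is a complete, self-contained proof:
\begin{itemize}
\item positivity of $\bs\chi[\xi]$ comes from the Goursat datum $\bs\Theta[\xi](\xi,z)=g(\xi,z)>0$ together with uniform continuity;
\item the key identity is $\partial_\xi\bs\lambda_1[\xi](w,z)\big|_{w=\xi}=\lambda_{1,w}(\xi,z)$, which holds because $q_w=\lambda_1\eta_w$ and the normalization \eqref{eq:efluxxi} force $\partial_w\bigl(\bs\Xi[\xi]/\bs\Theta[\xi]\bigr)=0$ on $\{w=\xi\}$.
\end{itemize}

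Two small remarks. First, with the normalization $\ell_i\cdot r_j=\delta_{ij}$ and $\nabla\phi_1=\ell_1$, one has exactly $\partial_w\bs u=r_1$. Hence $\lambda_{1,w}=\nabla\lambda_1\cdot r_1>c$ by \eqref{eq:gnldef}, with no orientation ambiguity. Second, the joint $C^1$ dependence on $\xi$ that you take from the paper is a genuine extra regularity input. Already the differentiability of $\xi\mapsto g(\xi,z)$ essentially needs $\lambda_{1z}$ to be differentiable in $w$, which asks for more than the standing assumption $f\in C^2$. This is implicit in the statement itself, since it differentiates in $\xi$, so it does not affect your argument.
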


We remark that thanks to \eqref{eq:efluxxi}, (and the analogous for the epigraph entropies $\bs \upsilon, \bs \varphi$, there holds
\begin{equation}\label{eq:coincideonline}
    \bs \lambda_1[\xi](\xi, z) = \lambda_1(\xi, z), \qquad  \bs \lambda_2[\zeta](w, \zeta) = \lambda_2(w, \zeta) \qquad \forall \; \xi, z, w, \zeta.
\end{equation}
Moreover, by definition, there holds
\begin{equation}\label{eq:chiupsline}
\bs \chi[\xi](\xi+,z) = g(\xi, z), \qquad \bs \upsilon[\zeta](w, \zeta+) = h(w, \zeta)
\end{equation}
where $g, h$ are defined in \eqref{eq:ghdef}.
As mentioned in the introduction, the main difficulty in the case of systems is the non-locality of the kinetic speed $\boldsymbol{\lambda}_1[\xi](w,z)$. In contrast with the scalar case, this quantity depends on the full state $(w,z)$.

\begin{remark}
    Throughout the paper we assume that $\mc U$ is of the form \eqref{eq:Udef}, because it makes it easy to construct the discontinuous entropies, however as it will be clear, all the results of this paper remain true in more general domains as long as one can make this construction.
\end{remark}

\section{Kinetic normal traces}\label{sec:weaktrace}
The existence of a weak normal trace for divergence measure fields is a well studied topic, see e.g. \cite{CT05}, in the context of measure divergence vector fields. The clear connection with finite entropy solutions is that $X = (\eta(\bs u), q(\bs u))$ is a measure divergence field for every smooth entropy-entropy flux pair $\eta, q$. Here we state a version in our setting, formulated in terms of the entropies $\bs \chi[\xi], \wt {\bs \chi}[\xi]$ of the previous section, which we prove for completeness. The kinetic normal traces $\bs \Phi(\xi), \wt{\bs \Phi}(\xi)$ represent the boundary fluxes of the vector fields $X_\xi:= (\bs \chi[\xi], \bs \psi[\xi])$ and $\wt X_\xi:= (\wt{\bs \chi}[\xi], \wt{\bs \psi}[\xi])$. Notice however that these are not, in general measure divergence vector fields.  Clearly, a symmetric result will hold for the entropies $\bs \upsilon[\zeta], \wt{\bs \upsilon}[\zeta]$ related to the second Riemann invariant $z$.

\begin{prop}[Kinetic normal traces] \label{prop:weaktraces}
  Let $\bs u \in \mathbf L^\infty(\Omega, \mc U)$ be a finite entropy solution to \eqref{eq:systemi}. Consider any Lipschitz curve $\gamma: [0,+\infty) \to \mathbb R$, and define 
  $$
  \Omega^{\pm} := \Big\{(t,x) \in \Omega \; | \; x \lessgtr \gamma(t) \Big\}
  $$
  Then there exists measurable functions $\bs \Phi^{\pm} : \mathrm{Graph}\, (\gamma) \times (\ul w, \overline w) \to \mathbb R$, that we call left and right kinetic normal trace on $\gamma$, related to the hypograph, such that for every $\varphi \in C^1_c(\Omega)$ and $\varrho \in \mathbf L^\infty(\underline w, \overline w)$ there holds
   \begin{equation}\label{eq:kinetictrace}
   \begin{aligned}
       \iint_{\Omega^{\pm}} \varphi(t,x)   \dif \mu_{\eta_\varrho}& = 
  - \iint_{\Omega^{\pm}} \Bigg( \int_{\ubar w}^{\bar w} \varrho(\xi) (\bs \chi[\xi](\bs u), \bs \psi[\xi](\bs u)) \cdot \nabla \varphi\dif \xi \Bigg) \dif x \dif t   \\
    & + \int_{\mathrm{Graph}\, (\gamma)} \varphi(t,x)\Big(  \int_{\ubar w}^{\bar w}   \varrho(\xi)  \bs \Phi^{\pm}(t,x,\xi)  \dif \xi \Big) \dif \mathscr H^1(t,x)
     \end{aligned}
      \end{equation}
      where $\eta_\varrho$ is the entropy defined by 
      \begin{equation}\label{eq:etarhodef}
      \eta_{\rho}(\bs u) := \int_{\ubar w}^{\bar w} \varrho(\xi) \bs \chi[\xi](\bs u) \dif \xi, \qquad \forall \; \bs u \in \mc U
       \end{equation}
      with corresponding entropy flux 
      \begin{equation}\label{eq:qrhodef}
      q_{\rho}(\bs u) := \int_{\ubar w}^{\bar w} \varrho(\xi) \bs \psi[\xi](\bs u) \dif \xi, \qquad \forall \; \bs u \in \mc U
       \end{equation}
\end{prop}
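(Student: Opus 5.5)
We prove the statement for $\Omega^+$; the case $\Omega^-$ is symmetric. The idea is the standard Gauss--Green argument for measure-divergence fields in the spirit of \cite{CT05}: cut off the domain near $\mathrm{Graph}(\gamma)$, pass to the limit, and then disintegrate the resulting boundary functional in $\xi$.

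\emph{Step 1: cutoff and limit.} Let $d(t,x):= x-\gamma(t)$ and $\theta_\varepsilon(t,x) := \min(1, \max(0, d/\varepsilon))$. For $\varphi\in C^1_c(\Omega)$ and $\varrho$ smooth (say $C^1$), the pair $(\eta_\varrho,q_\varrho)$ is a $C^2$ entropy pair, so $\mu_{\eta_\varrho}$ is a Radon measure. Testing with $\varphi\theta_\varepsilon$ gives
\[
\int \varphi\theta_\varepsilon \dif\mu_{\eta_\varrho} = -\iint \theta_\varepsilon (\eta_\varrho,q_\varrho)(\bs u)\cdot\nabla\varphi - \frac1\varepsilon\iint_{\{0<d<\varepsilon\}}\varphi\,(\eta_\varrho,q_\varrho)(\bs u)\cdot(-\dot\gamma,1).
\]
As $\varepsilon\to0$, the left-hand side tends to $\iint_{\Omega^+}\varphi\dif\mu_{\eta_\varrho}$ by dominated convergence for $|\mu|$, since $\theta_\varepsilon\to \mathbf 1_{\Omega^+}$ pointwise. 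The first term on the right converges as well. Hence the last term converges to a limit $L_\varrho(\varphi)$.

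\emph{Step 2: bounds and extension to $\mathbf L^\infty$ data.} The boundary term is bounded by $C\|\varrho\|_{L^1}\sup|\varphi|\,\mathscr H^1(\mathrm{Graph}\,\gamma\cap\operatorname{supp}\varphi)$, because $\bs\chi,\bs\psi$ are bounded. Consequently $L_\varrho$ extends to a functional that is bilinear in $(\varphi,\varrho)$ and bounded by $C\|\varphi\|_{L^1(\mathscr H^1)}\|\varrho\|_{L^1}$. To reach $\varrho\in\mathbf L^\infty$ I approximate: take $\varrho_n\to\varrho$ in $L^1$, bounded in $L^\infty$. All integrands converge, and the measures $\mu_{\eta_{\varrho_n}}$ restricted to $\Omega^+$ converge when tested against $\varphi$, since the identity expresses them through quantities continuous in $L^1(\varrho)$.

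\emph{Step 3: representation of the kernel.} I view $L$ as a bounded bilinear form on $L^1(\mathrm{Graph}\,\gamma,\mathscr H^1)\times L^1(\ubar w,\bar w)$. Equivalently, $L$ is a bounded linear functional on $L^1$ of the product space, using the density of tensor products together with the product bound. By duality $L^1$–$L^\infty$ on a $\sigma$-finite product, there is $\bs\Phi^+\in \mathbf L^\infty_{loc}(\mathrm{Graph}\,\gamma\times(\ubar w,\bar w))$ with $L_\varrho(\varphi)=\int\varphi\int\varrho\,\bs\Phi^+\dif\xi\dif\mathscr H^1$. This yields \eqref{eq:kinetictrace} with the sign conventions of the statement. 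Measurability of $\bs\Phi^+$ comes directly from the duality.

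\emph{Main obstacle.} The delicate step is the uniform product bound $|L_\varrho(\varphi)|\le C\|\varrho\|_{L^1}\|\varphi\|_{L^1(\mathscr H^1)}$ with $|\varphi|$ rather than $\sup|\varphi|$, since this is what is needed for the product duality. I would derive it from the layer average $\frac1\varepsilon\iint_{\{0<d<\varepsilon\}}|\varphi|\,|(\eta_\varrho,q_\varrho)|$. This average is controlled by $\sup|\bs\chi|,\sup|\bs\psi|$, $\|\varrho\|_{L^1}$, and $\frac1\varepsilon\iint_{\{0<d<\varepsilon\}}|\varphi|\to\int|\varphi|\sqrt{1+\dot\gamma^2}\,\dif t$, which is comparable to the $\mathscr H^1$ norm because $\gamma$ is Lipschitz. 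The bound holds first for $\varphi$ restricted from smooth functions and then extends by density.
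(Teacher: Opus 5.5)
Your proof is correct. Step 1 (testing with $\varphi\theta_\varepsilon$ and letting $\varepsilon\to0$) is the same as in the paper. The two arguments differ in how the kernel $\bs\Phi^+$ is produced.

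\textbf{The paper's route.} It regards the layer densities $\alpha^\varepsilon(t,x,\xi)=(\bs\chi[\xi](\bs u),\bs\psi[\xi](\bs u))\cdot\nabla h^\varepsilon$ as measures on the product space in $(t,x,\xi)$. It extracts a weakly converging subsequence from the uniform $\mathbf L^1$ bound. It then reads off a bounded density with respect to $\mathscr H^1\llcorner\mathrm{Graph}(\gamma)\otimes\mathscr L^1$ from the domination of $|\alpha^\varepsilon|$ by $C$ times Lebesgue measure in $\xi$, tensored with a layer measure converging to $\mathscr H^1\llcorner\mathrm{Graph}(\gamma)$.

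\textbf{Your route.} You observe that for fixed $\varrho$ the boundary term converges along the whole family, with no subsequence needed. You prove the product bound $|L_\varrho(\varphi)|\le C\|\varrho\|_{L^1}\int_{\mathrm{Graph}(\gamma)}|\varphi|\,\dif\mathscr H^1$, and obtain $\bs\Phi^+$ from $L^1$--$L^\infty$ duality on the product.

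\textbf{What each buys.} Your version gives uniqueness of $\bs\Phi^+$ for free and yields the identity for every $\varrho\in\mathbf L^1$ at once. By contrast, the paper's weak convergence of measures only tests continuous $\varrho$ directly, and its passage to $\varrho\in\mathbf L^\infty$ silently relies on the same domination. The paper's version, in turn, stays closer to the Chen--Torres normal-trace construction and needs no extension theorem.

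\textbf{Points to spell out.} First, the passage from the bilinear bound to a bounded functional on $\mathbf L^1$ of the product. This is specific to $\mathbf L^1$ and does not follow from density alone. Refine any finite combination of rectangles into $F=\sum d_{ij}\mathbf 1_{A_i}\otimes\mathbf 1_{B_j}$ with the $A_i$ pairwise disjoint and the $B_j$ pairwise disjoint. Bilinearity and the product bound then give $|L(F)|\le C\sum|d_{ij}|\,\mathscr H^1(A_i)\,|B_j|=C\|F\|_{L^1}$.

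Second, in Step 2 the approximation only shows that the left-hand sides for $\varrho_n$ converge. It does not identify the limit with $\iint_{\Omega^+}\varphi\,\dif\mu_{\eta_\varrho}$. Note that the finite-entropy definition guarantees $\mu_{\eta_\varrho}$ is a measure only for $C^2$ entropies, a point the paper also passes over. Whenever $\mu_{\eta_\varrho}$ is a measure, it is cleaner to run Step 1 directly for $\varrho$. Then use that the layer term is continuous in $\varrho\in\mathbf L^1$ uniformly in $\varepsilon$.
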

Symmetric results hold for the entropies ${\bs \upsilon}$, and we denote by ${\bs \Psi}^{\pm}$ the corresponding kinetic normal traces.
\begin{proof}
    We prove the statement only in $\Omega^+$.    Let $\varphi \in C^1_c(\Omega)$ and consider a cutoff function $h^\varepsilon$ satisfying
    \begin{equation}\label{eq:cutoff1}
    h^\varepsilon(t,x) := \begin{cases}
        1 & \text{if $x > \gamma(t) + \varepsilon$}, \\
        0 & \text{if $x \leq \gamma(t)$},
    \end{cases}
        \end{equation}
    \begin{equation}\label{eq:cutoff2}
    h^\varepsilon \leq 1, \qquad |\nabla h^\varepsilon| \leq \frac{C}{\varepsilon} \quad \text{everywhere,} \quad \partial_x h^\varepsilon \geq 0.
     \end{equation}
     Consider the Lipschitz entropy $\eta_\varrho$ as in \eqref{eq:etarhodef}, with corresponding entropy flux 
     \begin{equation}\label{eq:Fetarhodef}
      q_{\rho}(\bs u) := \int_{\ul w}^{\overline w} \varrho(\xi) \bs \psi[\xi](\bs u) \dif \xi, \qquad \forall \; \bs u \in \mc U.
       \end{equation}
  Now we test $\mu_\eta$ in \eqref{eq:feseta} with $\psi^\varepsilon = h^\varepsilon \varphi$ and use the representation formulas \eqref{eq:etarhodef}, \eqref{eq:Fetarhodef} to obtain 
    \begin{equation}
\begin{aligned}
    \iint_\Omega \psi^\varepsilon \dif \mu_{\eta_\varrho}
    &= -\iint_\Omega
        \begin{pmatrix}
            \eta_\varrho(\bs u) \\[4pt]
            q_\varrho(\bs u)
        \end{pmatrix}
        \cdot \nabla \psi^\varepsilon \dif x \dif t
    \\[10pt]
    &= -\iint_{\Omega} h^\varepsilon
        \begin{pmatrix}
            \eta_\varrho(\bs u) \\[4pt]
            q_\varrho(\bs u)
        \end{pmatrix}
        \cdot \nabla \varphi \dif x \dif t
        - \iint_{\Omega} \varphi
        \begin{pmatrix}
            \eta_\varrho(\bs u) \\[4pt]
            q_\varrho(\bs u)
        \end{pmatrix}
        \cdot \nabla h^\varepsilon \dif x \dif t
    \\[10pt]
    &= -\iint_\Omega h^\varepsilon
        \left(
            \int \varrho(\xi)\,
            \bigl(\bs\chi[\xi](\bs u),\, \bs\psi[\xi](\bs u)\bigr)
            \cdot \nabla \varphi \dif \xi
        \right) \dif x \dif t
    \\[6pt]
    &\quad
        - \iint_{\Omega} \varphi
        \left(
            \int \varrho(\xi)\,
            \bigl(\bs\chi[\xi](\bs u),\, \bs\psi[\xi](\bs u)\bigr)
            \cdot \nabla h^\varepsilon \dif \xi
        \right) \dif x \dif t
\end{aligned}
\end{equation}
Passing to the limit for $\varepsilon \to 0^+$ we deduce 
\begin{equation}\label{eq:tracelimit}
\begin{aligned}
    \iint_{\Omega^+} \varphi \dif \mu_{\eta_\varrho}
    &+ \iint_{\Omega^+}
        \left(
            \int \varrho(\xi)\,
            \bigl(\bs\chi[\xi](\bs u),\, \bs\psi[\xi](\bs u)\bigr)
            \cdot \nabla \varphi \dif \xi
        \right) \dif x \dif t
    \\[10pt]
    &= \lim_{\varepsilon \to 0^+}
        \iint_{\Omega}
        \left(
            \int \varphi(t,x)\, \varrho(\xi)\,
            \bigl(\bs\chi[\xi](\bs u),\, \bs\psi[\xi](\bs u)\bigr)
            \cdot \nabla h^\varepsilon \dif \xi
        \right) \dif x \dif t
\end{aligned}
\end{equation}
where we used the fact that the limits in the left hand side exist to deduce that the limit in the right hand side exists as well.
Notice that 
\begin{equation}
    \alpha^\varepsilon(t, x, \xi) 
    :=  
    \bigl(\bs\chi[\xi](\bs u(t,x)),\, \bs\psi[\xi](\bs u(t,x))\bigr) 
    \cdot \nabla h^\varepsilon(t,x)
\end{equation}
is locally uniformly  bounded in $\mathbf L^1$ since
\begin{equation}
\begin{aligned}
    &\int_{t_1}^{t_2} \int_{\mathbb{R}} \int_{\underline{w}}^{\overline{w}}
        \lvert \alpha^\varepsilon(t,x,\xi) \rvert \dif \xi \dif x \dif t
    \\[10pt]
    & \leq \frac{C}{\varepsilon}
        \left( \sup_{\bs u} \sqrt{\bs\chi^2(\bs u) + \bs\psi^2(\bs u)} \right)
        (t_2 - t_1)(\overline{w} - \underline{w})\, \varepsilon
    \\[6pt]
    & = C
        \left( \sup_{\bs u} \sqrt{\bs\chi^2(\bs u) + \bs\psi^2(\bs u)} \right)
        (t_2 - t_1)(\overline{w} - \underline{w})
\end{aligned}
\end{equation}
which is a bound independent of $\varepsilon$. Therefore up to a subsequence $\varepsilon_j$ we have $\alpha^{\varepsilon_j} \to \alpha$ weakly as measures, where 
$$
\alpha \in \mathscr M\Big(\mathbb R^+ \times \mathbb R \times (\underline w, \overline w) \Big)
$$
is a measure supported on $\mathrm{Graph}\, (\gamma) \times (\underline w, \overline w)$. We observe that $\alpha$ is absolutely continuous with respect to the $\mathscr H^2$ measure on the surface $\mathrm{Graph}\, (\gamma)\times (\underline w, \overline w)$, i.e. we can write $$\alpha =  \bs \Phi^+ \cdot \mathscr H^2\llcorner \mathrm{Graph}\, (\gamma)\times (\underline w, \overline w)$$
for a measurable $\mathbf L^\infty$ function $$\bs \Phi^+ :\mathrm{Graph}\, (\gamma) \times (\underline w, \overline w) \to \mathbb R.$$
In fact, $\alpha^\varepsilon$ writes as $g(t,x,\xi) \cdot \mathscr L^1 \otimes \beta^\varepsilon$, where $\beta^\varepsilon$ converges in measure to $\beta = \mathscr H^1 \llcorner \mr{Graph} \, (\gamma)$, for some $\mathbf L^\infty$ function $g$.

This shows, using the sequence $\{\varepsilon_j\}$ to pass in the limit in \eqref{eq:tracelimit},
\begin{equation}
\begin{aligned}
    \iint_{\Omega^+} \varphi \dif \mu_\eta
    &+ \iint_{\Omega^+}
        \left(
            \int \varrho(\xi)\,
            \bigl(\bs\chi[\xi](\bs u),\, \bs\psi[\xi](\bs u)\bigr)
            \cdot \nabla\varphi \dif \xi
        \right) \dif x \dif t
    \\[10pt]
    &= \int_{\mathrm{Graph}(\gamma)} \varphi(t,x)
        \left(
            \int \varrho(\xi)\, \bs\Phi^{+}(t,x,\xi) \dif \xi
        \right)
        \dif \mathscr{H}^1(t,x)
\end{aligned}
\end{equation}
which is exactly \eqref{eq:kinetictrace} for the positive domain $\Omega^+$. The proof in $\Omega^-$ is entirely symmetric and therefore it is omitted.
\end{proof}

As it happens for divergence measure fields \cite{CT05}, the balance \eqref{eq:kinetictrace} can be used to show that the functions $h \mapsto \big(\bs \chi[\cdot](\bs u(\cdot, \gamma(\cdot)+h)), \bs \psi[\cdot](\bs u(\cdot, \gamma(\cdot)+ h)) \big) \in \mathbf L^\infty(\mathbb R^+_t \times (\underline w, \overline w))\cdot \vec n$ admit weak$^\star$-limits in $\mathbf L^\infty(\mathbb R^+_t \times (\underline w, \overline w))$.
\begin{prop}\label{prop:normallimit}
    In the setting of Proposition \ref{prop:weaktraces}, the following holds. There exists a full set $\mc F \subset \mathbb R$ such that 
    \begin{equation}\label{eq:normallimit}
    \begin{aligned}
         \big(\bs \chi[\cdot](\bs u(\cdot, \gamma(\cdot)+h)),&  \bs \psi[\cdot](\bs u(\cdot, \gamma(\cdot)+ h)) \big)\cdot \vec n \longrightarrow \bs \Phi^{\pm}(\cdot, \gamma(\cdot), \cdot) \\
         & \text{weakly$^\star$ in $\mathbf L^\infty$ as $h \to 0^{\pm}$, $h \in \mc F$}.
         \end{aligned}
    \end{equation}
\end{prop}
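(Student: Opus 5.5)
We follow the Chen–Torres strategy for divergence-measure fields: average the balance \eqref{eq:kinetictrace} over thin strips adjacent to the curve, and identify the kinetic normal trace as a Lebesgue-point limit in the transversal variable $h$. We treat $\Omega^+$ and $h\to 0^+$; the other side is symmetric.

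\textbf{Step 1: the strip formula.} Fix $\varphi\in C^1_c(\Omega)$ and $\varrho\in\mathbf L^\infty(\ubar w,\bar w)$. Set
\[
F_h(t,\xi):=\big(\bs\chi[\xi](\bs u(t,\gamma(t)+h)),\bs\psi[\xi](\bs u(t,\gamma(t)+h))\big)\cdot(-\dot\gamma(t),1),
\]
and note that the unnormalized normal differs from $\vec n$ only by the bounded factor $\sqrt{1+\dot\gamma^2}$, which is absorbed into the test functions. Change variables $(t,x)=(t,\gamma(t)+y)$; this map is bi-Lipschitz with unit Jacobian. For $0<a<b$, take the cutoff $h^\varepsilon$ in the proof of Proposition \ref{prop:weaktraces} to be the piecewise linear function of $y$ rising from $0$ at $y=a$ to $1$ at $y=b$. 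Testing $\mu_{\eta_\varrho}$ with $h^\varepsilon\varphi$ gives
\[
\frac1{b-a}\int_a^b\!\!\iint \varphi(t,\gamma(t)+y)\varrho(\xi)F_y(t,\xi)\,\dif\xi\,\dif t\,\dif y
= \iint_{\{y>b\}}\!\!(\cdots) + R(a,b),
\]
where $R(a,b)$ collects the interior term $\iint\varrho\,(\bs\chi,\bs\psi)\cdot\nabla\varphi$ and the measure term over the strip $\{a<y<b\}$.

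\textbf{Step 2: passing $a\to0$.} As $a\to0$, the right-hand side of the strip formula converges to the corresponding expression over $\{y>b\}$ plus the strip $\{0<y<b\}$, because $|\mu_{\eta_\varrho}|$ of these strips is finite and the strips are nested. Comparing with \eqref{eq:kinetictrace}, the average $\frac1b\int_0^b\langle F_y,\varphi\varrho\rangle\,\dif y$ converges to $\int\varphi\varrho\,\bs\Phi^+$ as $b\to0$. The error is controlled by $|\mu_{\eta_\varrho}|(\{0<y<b\}\cap\mathrm{supp}\,\varphi)\to0$ together with $O(b)$ from the bounded interior integrand. This yields weak$^\star$ convergence of the averages $\frac1b\int_0^b F_y\,\dif y$, tested against product functions $\varphi(t,\gamma(t))\varrho(\xi)$. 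Here we replace $\varphi(t,\gamma(t)+y)$ by $\varphi(t,\gamma(t))$ at cost $O(b)$.

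\textbf{Step 3: from averages to a full set of pointwise limits.} This is the main obstacle, since Step 2 only controls Cesàro averages in $h$. The plan is to exploit the fact that $h\mapsto F_h$ is bounded in $\mathbf L^\infty$ and, tested against a fixed pair $(\varphi,\varrho)$, the function $h\mapsto\langle F_h,\varphi\varrho\rangle$ is in $\mathbf L^\infty$. More precisely, the strip formula with $a<b$ both positive shows that
\[
\langle F_a,\varphi\varrho\rangle-\langle F_b,\varphi\varrho\rangle = \text{(measure of the strip)}+O(b-a)
\]
holds at Lebesgue points $a,b$ of this function. Consequently $h\mapsto\langle F_h,\varphi\varrho\rangle$ coincides a.e. with a function of bounded variation near $0$, and so has a limit as $h\to0^+$ along its Lebesgue points. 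We then take a countable dense family $\{\varphi_k\otimes\varrho_m\}$, which is dense in $\mathbf L^1(\mathbb R^+_t\times(\ubar w,\bar w))$ by linear combinations, and let $\mc F$ be the intersection of the corresponding full sets of Lebesgue points. Along $h\in\mc F$ each pairing converges, and the limit equals $\int\varphi_k\varrho_m\bs\Phi^+$ by Step 2. Finally, the uniform $\mathbf L^\infty$ bound on $F_h$ and density upgrade this to weak$^\star$ convergence in $\mathbf L^\infty$. Independence of $\mc F$ from the test functions is ensured by the countability of the family.
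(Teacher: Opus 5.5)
Your argument is correct. It uses the same ingredients as the paper: the balance \eqref{eq:kinetictrace} tested with ramp cutoffs in $y=x-\gamma(t)$, Lebesgue points in $h$, and $|\mu_{\eta_\varrho}|(\{0<y<h\}\cap\mathrm{supp}\,\varphi)\to 0$. You assemble them less directly, though.

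The paper inserts into \eqref{eq:kinetictrace} the test function $g(t)\psi^\varepsilon_h(x-\gamma(t))$, with $\psi^\varepsilon_h\equiv 1$ from the curve up to height $h$ and ramping down on $(h,h+\varepsilon)$. Letting $\varepsilon\to 0$ at a Lebesgue point $h$ gives the pairing of $F_h$ with $g\varrho$ on one side and the boundary term with $\bs\Phi^+$ on the other. The only error is the strip contribution $|\mu_{\eta_\varrho}|(\{0<y<h\})+O(h)$, so convergence for individual $h$ follows at once. Your Step 2 instead places the ramp on $(0,b)$. That only yields Ces\`aro averages and forces Step 3, so the ``main obstacle'' is self-inflicted: it disappears if you move the ramp to $(b,b+\varepsilon)$.

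Your route does make explicit two points the paper leaves implicit. First, a single full set $\mc F$ works for all test functions, via your countable family and the uniform $\mathbf L^\infty$ bound; the paper only says ``for a full measure set of $h$''. Second, $h\mapsto\langle F_h,\varphi\varrho\rangle$ agrees a.e. with a BV function.

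For the BV claim to be literally exact, keep $\varphi$ evaluated at $(t,\gamma(t)+h)$, or use product test functions $g(t)\psi(x-\gamma(t))$ as the paper does. Replacing it by $\varphi(t,\gamma(t))$ costs $O(a)+O(b)$. That still gives the Cauchy property as $a,b\to 0^+$, which is all you need.

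One normalization remark: with \eqref{eq:kinetictrace} as written, the outward normal of $\Omega^+$ on the graph is $-\vec n$. Your Step 2 limit is therefore really $-\int\varphi\varrho\,\bs\Phi^+\dif\mathscr H^1$, and the paper's own computation ends with the same minus sign. This is a sign convention for $\bs\Phi^+$ and does not affect the argument.
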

\begin{proof}
    Consider any $p \in  C^\infty_c(\underline w, \overline w)$, and $\varphi$ of the form $\varphi(t,x) := \psi(x-\gamma(t))g(t)$, where $\psi \in  C^\infty_c(\mathbb R)$ and $g \in C^\infty_c(\mathbb R^+)$. Then using Proposition \ref{prop:weaktraces} we deduce
    \begin{equation}\label{eq:transcurve}
        \begin{aligned}
            \iint_{\Omega^{\pm}}  & \Bigg(\int_{\underline w}^{\overline w} \varrho(\xi) \psi^\prime(x-\gamma(t)) g(t) (\bs \chi[\xi](\bs u), \bs \psi[\xi](\bs u)) \cdot (-\dot \gamma, 1) \Bigg) \dif \xi \dif x \dif t & \\
            & = \int_{\mathrm{Graph}(\gamma)} \varphi(t,x)
        \left(
            \int \varrho(\xi)\, \bs\Phi^{+}(t,x,\xi) \dif \xi
        \right)
        \dif \mathscr{H}^1(t,x) \\
        & - \iint_{\Omega^{\pm}} \Bigg( \int \varrho(\xi) g^\prime(t) \psi(x-\gamma(t)) \bs \chi[\xi](\bs u) \dif \xi  \Bigg) \dif x \dif t\\
        & - \iint_{\Omega^{\pm}} \varphi \dif \mu_\eta 
        \end{aligned}
    \end{equation}
    Next, for every $h > 0$, we consider 
    $$
    \psi^\varepsilon_h(y) := \begin{cases}
        0 & \text{if $y \geq h+\varepsilon$},\\
        \frac{1}{\varepsilon} (h+\varepsilon-x) & \text{if $y \in (h, h+\varepsilon)$}\\
         1 &\text{if $y \leq h $}.
    \end{cases}
    $$
    For a full measure set of $h \in \mathbb R$, using $\varphi^\varepsilon_h$ and passing to the limit as $\varepsilon \to 0$ in \eqref{eq:transcurve} yields
    \begin{equation}
    \begin{aligned}
         -\int \int_{\underline w}^{\overline w} & \varrho(\xi) g(t) (\bs \chi[\xi](\bs u(t, \gamma(t)+ h)), \bs \psi[\xi](\bs u(t, \gamma(t)+ h))) \cdot (-\dot \gamma, 1)  \dif \xi \dif t \\
        & = \int \int g(t) \varrho(\xi) \sqrt{1+\dot \gamma^2}\bs \Phi^+(t, \gamma(t), \xi) \dif t \dif \xi + o(1)_h  
    \end{aligned}
    \end{equation}
    where $o(1)_h$ is an error term that goes to zero as $h$ approaches zero. This proves the claim.
\end{proof}

The same holds for the entropies related to the epigraph, $\wt{\bs\chi}$. For later reference we state (without proving) the symmetric result.
\begin{prop}\label{prop:weaktracestilda}
    Let $\Omega \subset \mathbb{R}^2$ be an open set and let $\bs u \in \mathbf{L}^\infty(\Omega, \mc{U})$
    be a finite entropy solution to \eqref{eq:systemi}.
    Consider any Lipschitz curve $\gamma \colon [0,+\infty) \to \mathbb{R}$.

    Then there exist measurable functions
    $\wt{\bs\Phi}^{\pm} \colon \mathrm{Graph}(\gamma) \times (\underline{w}, \overline{w}) \to \mathbb{R}$
    such that for every $\varphi \in C^1_c(\Omega)$ and $\varrho \in \mathbf{L}^\infty(\underline{w}, \overline{w})$
    there holds
    \begin{equation}\label{eq:kinetictracetilda}
    \begin{aligned}
        \iint_{\Omega^{\pm}} \varphi(t,x) \dif \mu_{\wt\eta_\varrho}
        &= -\iint_{\Omega^{\pm}}
            \left(
                \int \varrho(\xi)\,
                \bigl(\wt{\bs\chi}[\xi](\bs u),\, \wt{\bs\psi}[\xi](\bs u)\bigr)
                \cdot \nabla\varphi \dif \xi
            \right) \dif x \dif t
        \\[10pt]
        &\quad
            + \int_{\mathrm{Graph}(\gamma)} \varphi(t,x)
            \left(
                \int \varrho(\xi)\, \wt{\bs\Phi}^{\pm}(t,x,\xi) \dif \xi
            \right)
            \dif \mathscr{H}^1(t,x)
    \end{aligned}
    \end{equation}
    where $\wt{\eta}_\varrho$ is the entropy defined by
    \begin{equation}\label{eq:etarhodeftilda}
        \wt{\eta}_\varrho(\bs u)
        := \int_{\underline{w}}^{\overline{w}} \varrho(\xi)\, \wt{\bs\chi}[\xi](\bs u) \dif \xi,
        \qquad \forall\; \bs u \in \mc{U}
    \end{equation}
    with entropy flux
    \begin{equation}\label{eq:qrhodeftilda}
        \wt{q}_\varrho(\bs u)
        := \int_{\underline{w}}^{\overline{w}} \varrho(\xi)\, \wt{\bs\psi}[\xi](\bs u) \dif \xi,
        \qquad \forall\; \bs u \in \mc{U}
    \end{equation}
\end{prop}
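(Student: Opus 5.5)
The plan is to repeat the proof of Proposition \ref{prop:weaktraces} verbatim, with the hypograph entropies $\bs\chi[\xi],\bs\psi[\xi]$ replaced by the epigraph ones $\wt{\bs\chi}[\xi],\wt{\bs\psi}[\xi]$ from \eqref{e:chir}--\eqref{eq:fluxes}. The only structural inputs are the following.

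\begin{itemize}
\item $(\wt{\bs\chi}[\xi],\wt{\bs\psi}[\xi])$ is a distributional entropy--entropy flux pair by the theorem of \cite{PT00}.
\item These pairs are uniformly bounded in $(\xi,\bs u)$, since $\bs\Theta[\xi]$ and $\bs\Xi[\xi]$ are smooth in $(w,z,\xi)$ on the compact set $\mc W\times[\ubar w,\bar w]$.
\item For $\varrho\in\mathbf L^\infty$, the superposition $(\wt\eta_\varrho,\wt q_\varrho)$ is a Lipschitz entropy pair, by the Remark after \eqref{eq:entsup} applied to the epigraph family. Hence $\mu_{\wt\eta_\varrho}$ is a Radon measure for a finite entropy solution.
\end{itemize}

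The last point needs a little care. Definition \ref{defi:fes} only asks for $C^2$ pairs. To pass to Lipschitz pairs, approximate $\varrho$ by smooth $\varrho_k$ with $\|\varrho_k\|_\infty$ bounded. Then $\mu_{\wt\eta_{\varrho_k}}$ is bounded in total variation on compacts, provided we know the entropy-production map is controlled; alternatively, write $\wt{\bs\chi}[\xi]=\bs\Theta[\xi]-\bs\chi[\xi]$ up to a null set, so that $\wt\eta_\varrho=\int\varrho\,\bs\Theta[\xi]\,\dif\xi-\eta_\varrho$. The first term is a smooth entropy, because $\xi\mapsto\bs\Theta[\xi]$ is smooth and the integral is a smooth entropy pair. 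The second term is already handled in Proposition \ref{prop:weaktraces}. I would use this decomposition to reduce measurability of $\mu_{\wt\eta_\varrho}$ to the hypograph case.

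Now fix $\varphi\in C^1_c(\Omega)$ and work in $\Omega^+$; the case $\Omega^-$ is symmetric.

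\begin{enumerate}
\item Take the cutoff $h^\varepsilon$ of \eqref{eq:cutoff1}--\eqref{eq:cutoff2} and test $\mu_{\wt\eta_\varrho}$ against $h^\varepsilon\varphi$.
\item Expand $\nabla(h^\varepsilon\varphi)$ and insert the superposition formulas \eqref{eq:etarhodeftilda}--\eqref{eq:qrhodeftilda}. Let $\varepsilon\to0$: the terms containing $\nabla\varphi$ converge to the $\Omega^+$ integrals by dominated convergence. Consequently the remaining term
\[
\iint\varphi\int\varrho(\xi)\bigl(\wt{\bs\chi}[\xi](\bs u),\wt{\bs\psi}[\xi](\bs u)\bigr)\cdot\nabla h^\varepsilon\,\dif\xi\,\dif x\,\dif t
\]
has a limit.
\item The density $\wt\alpha^\varepsilon(t,x,\xi)=\bigl(\wt{\bs\chi}[\xi](\bs u),\wt{\bs\psi}[\xi](\bs u)\bigr)\cdot\nabla h^\varepsilon$ is bounded in $\mathbf L^1_{loc}$ uniformly in $\varepsilon$, by the same $C/\varepsilon\cdot\varepsilon$ estimate as before.
\item Extract a weakly converging subsequence $\wt\alpha^{\varepsilon_j}\rightharpoonup\wt\alpha$, with $\wt\alpha$ supported on $\mathrm{Graph}(\gamma)\times(\ubar w,\bar w)$.
\end{enumerate}

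The only delicate step, exactly as in the original proof, is to show that $\wt\alpha$ has an $\mathbf L^\infty$ density $\wt{\bs\Phi}^+$ with respect to $\mathscr H^2\llcorner\mathrm{Graph}(\gamma)\times(\ubar w,\bar w)$. I would argue by domination. Since $|\wt\alpha^\varepsilon|\le C|\nabla h^\varepsilon|\,\mathscr L^3$, and $|\nabla h^\varepsilon|\,\mathscr L^2$ converges locally to a measure bounded by $C\mathscr H^1\llcorner\mathrm{Graph}(\gamma)$ (because $\gamma$ is Lipschitz), we get $|\wt\alpha|\le C\,\mathscr H^1\llcorner\mathrm{Graph}(\gamma)\otimes\mathscr L^1$. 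Radon--Nikodym then gives the bounded density $\wt{\bs\Phi}^+$.

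Testing the limit against $\varphi(t,x)\varrho(\xi)$ yields \eqref{eq:kinetictracetilda}. Two points remain to check. First, the density must not depend on $\varrho$: it is defined once from the limit measure on the product space, so this holds. Second, the density must be independent of the subsequence. This follows because the left-hand side determines $\int\varrho\,\wt{\bs\Phi}^+\,\dif\xi$ for all $\varrho$ and all $\varphi$, which fixes $\wt{\bs\Phi}^+$ almost everywhere.
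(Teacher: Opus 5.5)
Your proposal is correct and follows the route the paper intends. The paper states Proposition~\ref{prop:weaktracestilda} without proof as the symmetric counterpart of Proposition~\ref{prop:weaktraces}, and you rerun that cutoff and weak-compactness argument for $(\wt{\bs\chi}[\xi],\wt{\bs\psi}[\xi])$; your domination bound $|\wt\alpha|\le C\,\mathscr H^1\llcorner\mathrm{Graph}(\gamma)\otimes\mathscr L^1$ justifies the bounded density somewhat more cleanly than the paper does. One caveat: Proposition~\ref{prop:weaktraces} presupposes rather than proves that $\mu_{\eta_\varrho}$ is a Radon measure for merely bounded $\varrho$ (Definition~\ref{defi:fes} only covers $C^2$ pairs), so your identity $\wt\eta_\varrho=\int\varrho(\xi)\,\bs\Theta[\xi]\dif\xi-\eta_\varrho$ carries that same standing assumption over to the epigraph case rather than removing it.
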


Symmetric results hold for the entropies $\wt{\bs \upsilon}$, and we denote by $\wt{\bs \Psi}^{\pm}$ the corresponding kinetic normal traces.

\vspace{0.2cm}
At this stage, we summarize the notation related to kinetic entropies and traces. Since several families of objects have been introduced, we collect the main quantities and their definitions in the following table for the reader’s convenience.
\begin{table}[h]
\centering
\renewcommand{\arraystretch}{2.0}
\setlength{\tabcolsep}{18pt}
\begin{tabular}{cccc}
\toprule
\textbf{Entropy} & \textbf{Entropy Flux} & \textbf{Support} & \textbf{Kinetic Normal Trace} \\
\midrule
$\bs\chi[\xi]$             & $\bs\psi[\xi]$             & $\{w \geq \xi\}$  & $\bs\Phi^{\pm}(\cdot, \cdot, \xi)$             \\
$\bs\upsilon[\zeta]$       & $\bs\varphi[\zeta]$        & $\{z \geq \zeta\}$ & $\bs\Psi^{\pm}(\cdot, \cdot, \zeta)$           \\
$\wt{\bs\chi}[\xi]$        & $\wt{\bs\psi}[\xi]$        & $\{w \leq \xi\}$  & $\wt{\bs\Phi}^{\pm}(\cdot, \cdot, \xi)$        \\
$\wt{\bs\upsilon}[\zeta]$  & $\wt{\bs\varphi}[\zeta]$  & $\{z \leq \zeta\}$ & $\wt{\bs\Psi}^{\pm}(\cdot, \cdot, \zeta)$      \\
\bottomrule
\end{tabular}
\vspace{8pt}
\caption{Summary of notation: kinetic entropies, fluxes, supports, and traces.}
\label{tab:notation}
\end{table}

\subsection{Blow-ups, constant weak traces}
We will show in Proposition \ref{prop:tangentu} that blow-ups along Lipschitz curves converge at $\mathscr{H}^1$-almost every point of the curve to highly rigid limiting objects, which we refer to as \textit{isentropic solutions with constant kinetic traces}.
We therefore introduce the following Definition. 
\begin{defi}\label{defi:ckt}
    Let $\vec{n} = (n_t, n_x) \in \mathbb{S}^1$ with $n_x \neq 0$, and let
    \begin{equation*}
        {\mr H}^\pm := \bigl\{(t,x) \in \mathbb R^2 \mid x \gtrless \gamma_b(t),\ t \in \mathbb{R} \bigr\},
        \qquad
        \gamma_b(t) := -\frac{n_t}{n_x} \cdot t. 
    \end{equation*}
    We say that $\bs u \in \mathbf{L}^\infty({\mr H}^\pm, \mc{U})$ is an \emph{isentropic solution
    in a half-space with constant kinetic traces} if there exist functions
    $\bs\Phi^\pm,\, \wt{\bs\Phi}^\pm : (\underline w, \overline w) \to \mathbb R$ and $\bs\Psi^\pm,\, \wt{\bs\Psi}^\pm: (\underline z, \overline z) \to \mathbb R$
    such that for every $\varphi \in C^1_c(\mathbb R^2)$ and $\varrho \in \mathbf{L}^\infty(\mathbb R)$:
    \begin{equation}\label{eq:normalxi}
    \begin{aligned}
        \iint_{{\mr H}^\pm}
        \left(
            \int \varrho(\xi)\,
            \bigl(\bs\chi[\xi](\bs u),\, \bs\psi[\xi](\bs u)\bigr)
            \cdot \nabla\varphi \dif\xi
        \right) \dif x \dif t
        \\[6pt]
        = \int_{\mathrm{Graph}(\gamma_b)}
        \int_{\underline{w}}^{\overline{w}}
        \varphi(t,x)\, \varrho(\xi)\, \bs\Phi^{\pm}(\xi)
        \dif\xi \dif\mathscr{H}^1(t,x)
    \end{aligned}
    \end{equation}
    \begin{equation}\label{eq:normalxitilde}
    \begin{aligned}
        \iint_{{\mr H}^\pm}
        \left(
            \int \varrho(\xi)\,
            \bigl(\wt{\bs\chi}[\xi](\bs u),\, \wt{\bs\psi}[\xi](\bs u)\bigr)
            \cdot \nabla\varphi \dif\xi
        \right) \dif x \dif t
        \\[6pt]
        = \int_{\mathrm{Graph}(\gamma_b)}
        \int_{\underline{w}}^{\overline{w}}
        \varphi(t,x)\, \varrho(\xi)\, \wt{\bs\Phi}^{\pm}(\xi)
        \dif\xi \dif\mathscr{H}^1(t,x)
    \end{aligned}
    \end{equation}
    \begin{equation}\label{eq:normalzeta}
    \begin{aligned}
        \iint_{{\mr H}^\pm}
        \left(
            \int \varrho(\zeta)\,
            \bigl(\bs\upsilon[\zeta](\bs u),\, \bs\varphi[\zeta](\bs u)\bigr)
            \cdot \nabla\varphi \dif\zeta
        \right) \dif x \dif t
        \\[6pt]
        = \int_{\mathrm{Graph}(\gamma_b)}
        \int_{\underline{z}}^{\overline{z}}
        \varphi(t,x)\, \varrho(\zeta)\, \bs\Psi^{\pm}(\zeta)
        \dif\zeta \dif\mathscr{H}^1(t,x)
    \end{aligned}
    \end{equation}
    \begin{equation}\label{eq:normalzetatilde}
    \begin{aligned}
        \iint_{{\mr H}^\pm}
        \left(
            \int \varrho(\zeta)\,
            \bigl(\wt{\bs\upsilon}[\zeta](\bs u),\, \wt{\bs\varphi}[\zeta](\bs u)\bigr)
            \cdot \nabla\varphi \dif\zeta
        \right) \dif x \dif t
        \\[6pt]
        = \int_{\mathrm{Graph}(\gamma_b)}
        \int_{\underline{z}}^{\overline{z}}
        \varphi(t,x)\, \varrho(\zeta)\, \wt{\bs\Psi}^{\pm}(\zeta)
        \dif\zeta \dif\mathscr{H}^1(t,x)
    \end{aligned}
    \end{equation}
\end{defi}

By Fubini's theorem, it is not difficult to see that it is equivalent
to have a condition pointwise in $\xi, \zeta$:

\begin{coro}\label{coro:isehalf}
    Let $\bs u$ be an isentropic solution in a half space with constant kinetic traces. Then for almost every $\xi$ and for all $\varphi \in C^1_c(\Omega)$ there holds
    \begin{equation}\label{eq:halfcont}
        \iint_{{\mr H}^\pm}
        \bigl(\bs\chi[\xi](\bs u),\, \bs\psi[\xi](\bs u)\bigr)
        \cdot \nabla\varphi \dif x \dif t
        = \int_{\mathrm{Graph}(\gamma_b)} \varphi(t,x)\,
        \bs\Phi^{\pm}(\xi) \dif\mathscr{H}^1(t,x)
    \end{equation}
    \begin{equation}\label{eq:halfconttilde}
        \iint_{{\mr H}^\pm}
        \bigl(\wt{\bs\chi}[\xi](\bs u),\, \wt{\bs\psi}[\xi](\bs u)\bigr)
        \cdot \nabla\varphi \dif x \dif t
        = \int_{\mathrm{Graph}(\gamma_b)} \varphi(t,x)\,
        \wt{\bs\Phi}^{\pm}(\xi) \dif\mathscr{H}^1(t,x)
    \end{equation}
    The same holds for the entropies related to the second Riemann invariant.
\end{coro}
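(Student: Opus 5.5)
The plan is a separability argument: show that the integrated identity \eqref{eq:normalxi}, required for every weight $\varrho$, forces the identity to hold at almost every fixed $\xi$, with an exceptional set that does not depend on $\varphi$. I describe the case of $\bs\chi[\xi]$ on $\mr H^+$; the cases $\wt{\bs\chi}$, $\mr H^-$ and the $\zeta$-entropies are identical.

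\textbf{Step 1: a function of $\xi$ for each $\varphi$.} For fixed $\varphi \in C^1_c(\mathbb R^2)$ define
$$
F_\varphi(\xi) := \iint_{\mr H^+} \bigl(\bs\chi[\xi](\bs u), \bs\psi[\xi](\bs u)\bigr)\cdot\nabla\varphi \dif x\dif t - \bs\Phi^+(\xi)\int_{\mathrm{Graph}(\gamma_b)}\varphi \dif\mathscr H^1 .
$$
The map $(t,x,\xi)\mapsto \bs\chi[\xi](\bs u(t,x))$ is jointly measurable, since $\bs\chi[\xi](w,z) = \bs\Theta[\xi](w,z)\mathbf 1_{\{w\ge\xi\}}$ with $\bs\Theta$ smooth in all three variables; the same holds for $\bs\psi$. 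These functions are bounded, and $\nabla\varphi$ has compact support. Fubini's theorem therefore lets us exchange the $\xi$-integral with the $(t,x)$-integral in \eqref{eq:normalxi}, which gives
$$
\int \varrho(\xi)F_\varphi(\xi)\dif\xi = 0 \qquad \text{for every } \varrho\in\mathbf L^\infty .
$$
Here $F_\varphi \in \mathbf L^1(\underline w,\overline w)$, using that $\bs\Phi^+$ is integrable. Taking $\varrho = \operatorname{sign} F_\varphi$ shows $F_\varphi = 0$ for almost every $\xi$. The exceptional null set $N_\varphi$ depends on $\varphi$.

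\textbf{Step 2: a countable dense family.} Fix a countable family $\{\varphi_k\}\subset C^1_c(\mathbb R^2)$ that is dense in the following sense: for every $\varphi$ there is a subsequence $\varphi_{k_j}$ with supports in a common compact set, $\varphi_{k_j}\to\varphi$ uniformly, and $\nabla\varphi_{k_j}\to\nabla\varphi$ uniformly. Such a family exists because $C^1_c$ is separable in this inductive sense; one can take polynomials with rational coefficients multiplied by a fixed countable family of cutoffs. Set $N := \bigcup_k N_{\varphi_k}$, which is a null set.

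\textbf{Step 3: pass to the limit for $\xi\notin N$.} For such $\xi$ we have $F_{\varphi_{k_j}}(\xi)=0$ for all $j$. Both terms of $F_\varphi(\xi)$ are continuous in $\varphi$ under the convergence above:
\begin{itemize}
\item the first term because the integrand is bounded on a fixed compact set;
\item the second term because $\mathrm{Graph}(\gamma_b)$ intersected with a compact set has finite $\mathscr H^1$ measure.
\end{itemize}
Hence $F_\varphi(\xi)=0$ for all $\varphi$, which is \eqref{eq:halfcont}. The sign convention is inherited directly from \eqref{eq:normalxi}.

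The only delicate point is Step 2: the null set must be made independent of $\varphi$, which requires the countable dense family. Everything else is routine Fubini.
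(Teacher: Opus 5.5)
Your proposal is correct and follows the paper's own argument. Fubini applied to \eqref{eq:normalxi} gives the identity at each fixed $\xi$ outside a $\varphi$-dependent null set; a countable $C^1$-dense family of test functions, together with continuity of both sides under uniform convergence of $\varphi$ and $\nabla\varphi$ with supports in a common compact set, then makes the exceptional set independent of $\varphi$. Your specification of the required density is slightly more precise than the paper's phrasing.
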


\begin{proof}
    Applying Fubini's theorem to \eqref{eq:normalxi}, we deduce that
    for all $\varrho \in \mathbf L^\infty$ and $\varphi \in C^1_c$,
    \begin{equation}\label{eq:normalxi1}
    \begin{aligned}
        \int_{\underline{w}}^{\overline{w}} \varrho(\xi)
        \left(
            \iint_{{\mr H}^\pm}
            \bigl(\bs\chi[\xi](\bs u),\, \bs\psi[\xi](\bs u)\bigr)
            \cdot \nabla\varphi \dif x \dif t
        \right) \dif\xi
        \\[6pt]
        = \int_{\underline{w}}^{\overline{w}} \varrho(\xi)
        \left(
            \int_{\mathrm{Graph}(\gamma_b)}
            \varphi(t,x)\, \bs\Phi^{\pm}(\xi) \dif\mathscr{H}^1(t,x)
        \right) \dif\xi.
    \end{aligned}
    \end{equation}
    Therefore, for a fixed $\varphi \in C^1$, for almost every $\xi$ there holds \eqref{eq:halfcont}.
    Considering a countable dense set of test functions $\varphi \in C^1$
    in the uniform topology, \eqref{eq:halfcont} holds for this dense set
    up to a negligible set of $\xi$.
    Since both sides of \eqref{eq:halfcont} are continuous with respect
    to $C^1$ convergence of $\varphi$, the result extends to all $\varphi \in C^1_c$.
\end{proof}

Given a Lipschitz curve $\gamma:[0,T] \to \mathbb R$ and a finite entropy solution $\bs u$, we define for  $\bar t \in [0,T]$, $r  > 0$, the rescalings
\begin{equation}\label{eq:blowupt}
\bs u^{\bar t}_r(s, y) := \bs u\big(\bar t + sr, \gamma(\bar t) + y r\big)
\end{equation}
In the following we let 
\begin{equation}\label{eq:supentropy}
\bs \nu \doteq \bigvee_{\substack{\eta \in \mc E \\ |\eta|_{C^2} < 1}} |\mu_{\eta}| \; \in \; \msc M(\Omega).
\end{equation}
Here $\bigvee$ denotes the supremum in the sense of measures (see \cite[Definition 1.68]{AFP00}) and $\mc E$ is the set of smooth $C^2$ entropies $\eta : \mc U \to \mathbb R$ , while $\mu_\eta$ is the corresponding dissipation measure \eqref{eq:entrdis}.

\begin{prop}\label{prop:tangentu}
    Let $\bs u \in \mathbf L^\infty(\Omega, \mc U)$ be a finite entropy solution and let $\gamma:[0,T] \to \mathbb R$ be a Lipschitz curve. Then for almost every $\bar t > 0$ the following holds. Assume that for some subsequence $r_j \downarrow 0$ there holds
    $$
    \bs u^{\bar t}_{r_j} \longrightarrow \bs u^{\bar t}_\infty \qquad \text{in $\mathbf L^1_{loc}$ as $j \to +\infty$}
    $$
    for some limit function $\bs u_\infty^{\bar t}$, where $\bs u^{\bar t}_{r_j}$ are as in \eqref{eq:blowupt}. Then $\bs u^{\bar t}_\infty$  restricted to $\Omega_\infty^{\pm}$ is an isentropic solution with constant kinetic traces if in Definition \ref{defi:ckt} we pick
\begin{equation}\label{eq:omegainf}
{\mr H}^- \equiv \Omega^{-}_\infty := \Big\{(t,x) \in \mathbb R^2 \; | \; x < \gamma_\infty(t) \Big\}, \qquad {\mr H}^+ \equiv \Omega^{+}_\infty := \Big\{(t,x) \in \mathbb R^2 \; | \; x > \gamma_\infty(t) \Big\}
\end{equation}
where 
$$
\gamma_b \equiv \gamma_\infty(t) := \dot \gamma(\bar t) t, \quad \forall \; t \in \mathbb R;
$$
and the (constant) kinetic normal traces are given by the kinetic traces of Proposition \ref{prop:weaktraces} computed at the point $(\bar t, \gamma(\bar t))$:
$$
\bs \Phi^{\pm}(\xi) = \bs \Phi^\pm(\bar t, \gamma(\bar t), \xi), \qquad \wt{\bs \Phi}^{\pm}(\xi) = \wt{\bs \Phi}^\pm(\bar t, \gamma(\bar t), \xi)
$$
$$
\bs \Psi^{\pm}(\zeta) = \bs \Psi^\pm(\bar t, \gamma(\bar t), \zeta), \qquad \wt{\bs \Psi}^{\pm}(\zeta) = \wt{\bs \Psi}^\pm(\bar t, \gamma(\bar t), \zeta).
$$
\end{prop}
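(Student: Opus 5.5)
The plan is to select a full-measure set of good times $\bar t$ by three conditions, and then pass to the limit in the kinetic balance \eqref{eq:kinetictrace} written for the rescaled functions. The good times are those $\bar t$ at which:
\begin{enumerate}
\item[(i)] $\gamma$ is differentiable;
\item[(ii)] $\bar t$ is a Lebesgue point of $t\mapsto \bs\Phi^\pm(t,\gamma(t),\cdot)$ (and of $\wt{\bs\Phi}^\pm,\bs\Psi^\pm,\wt{\bs\Psi}^\pm$), viewed as maps with values in $\mathbf L^1(\underline w,\overline w)$, so in the sense $\frac1r\int_{\bar t-r}^{\bar t+r}\int|\bs\Phi^\pm(t,\gamma(t),\xi)-\bs\Phi^\pm(\bar t,\gamma(\bar t),\xi)|\dif\xi\dif t\to0$;
\item[(iii)] the density of the dissipation measure vanishes at the base point: $r^{-1}\bs\nu(B_{Cr}(\bar t,\gamma(\bar t)))\to0$.
\end{enumerate}
Conditions (i) and (ii) hold a.e. by Rademacher and the Lebesgue differentiation theorem for Bochner functions; the traces are bounded measurable, hence in $\mathbf L^1_{loc}(\mathbf L^1)$. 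For (iii), decompose $\bs\nu$ restricted to $\mathrm{Graph}(\gamma)$. The absolutely continuous part $f\,\mathscr H^1$ satisfies $\bs\nu(B_r)\sim 2r f$, which does not vanish. \emph{This is the main obstacle:} one cannot expect $\bs\nu(B_r)=o(r)$ on the graph in general. The fix is to not discard that part but to include it in the boundary term. That is, I would apply \eqref{eq:kinetictrace} on the \emph{open} sets $\Omega^\pm$, which exclude the graph. The relevant quantity is then $\bs\nu\llcorner\Omega^\pm$, and for this measure, by Besicovitch differentiation with respect to $\mathscr H^1\llcorner\mathrm{Graph}(\gamma)$, one has $r^{-1}\bs\nu(\Omega^\pm\cap B_r(\bar t,\gamma(\bar t)))\to0$ for $\mathscr H^1$-a.e. point of the graph, since $\bs\nu\llcorner\Omega^\pm$ gives zero mass to the graph. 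Strictly, Besicovitch gives $\frac{\bs\nu\llcorner\Omega^\pm(B_r)}{\mathscr H^1\llcorner\mathrm{Graph}(B_r)}\to$ the density of the absolutely continuous part, which here is $0$ because the measure is concentrated off the graph. Since $\gamma$ is Lipschitz, $\mathscr H^1$-a.e. point on the graph corresponds to a.e. $\bar t$.

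With these times fixed, take $\varphi\in C^1_c(\mathbb R^2)$ and $\varrho\in\mathbf L^\infty$, and test \eqref{eq:kinetictrace} with $\varphi_r(t,x)=\varphi((t-\bar t)/r,(x-\gamma(\bar t))/r)$. The test function is supported in a ball of radius $Cr$. Changing variables and dividing by $r$ gives
\[
-\iint_{\Omega^\pm_r}\int\varrho(\xi)(\bs\chi[\xi],\bs\psi[\xi])(\bs u^{\bar t}_r)\cdot\nabla\varphi\dif\xi\dif y\dif s
+\frac1r\int_{\mathrm{Graph}(\gamma)}\varphi_r\int\varrho\,\bs\Phi^\pm\dif\xi\dif\mathscr H^1
=\frac1r\iint_{\Omega^\pm}\varphi_r\dif\mu_{\eta_\varrho}.
\]
Here $\Omega^\pm_r=\{y\gtrless\gamma_r(s)\}$ with $\gamma_r(s)=(\gamma(\bar t+rs)-\gamma(\bar t))/r$.

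The right-hand side is bounded by $\|\varphi\|_\infty\, C_\varrho\, r^{-1}\bs\nu(\Omega^\pm\cap B_{Cr})\to0$, using (iii). This requires $|\mu_{\eta_\varrho}|\le C_\varrho\bs\nu$ for the Lipschitz entropy $\eta_\varrho$. I would obtain this by first approximating $\varrho$ with smooth $\varrho_k$: then $\eta_{\varrho_k}$ is $C^2$ with $|\eta_{\varrho_k}|_{C^2}\lesssim\|\varrho_k\|_{C^1}$. The claim for general $\varrho$ only needs the identity for a.e. $\xi$ via Corollary \ref{coro:isehalf}, so it suffices to establish the limiting identity for smooth $\varrho$; the equivalent forms in Definition \ref{defi:ckt} then follow by density in $\varrho$, since both sides are continuous under weak$^\star$ convergence of $\varrho$.

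For the first term, $\gamma_r\to\gamma_\infty$ locally uniformly by (i), so $\mathbf 1_{\Omega^\pm_r}\to\mathbf 1_{\Omega^\pm_\infty}$ in $\mathbf L^1_{loc}$. Along $r_j$, $\bs u^{\bar t}_{r_j}\to\bs u^{\bar t}_\infty$ in $\mathbf L^1_{loc}$, hence a.e. along a further subsequence. The integrand $\int\varrho\,\bs\chi[\xi](\bs u)\dif\xi$ is a Lipschitz function of $\bs u$, so dominated convergence applies. For the boundary term, parametrize the graph by $t=\bar t+rs$, so that it becomes $\int\varphi(s,\gamma_r(s))\int\varrho(\xi)\bs\Phi^\pm(\bar t+rs,\gamma(\bar t+rs),\xi)\dif\xi\sqrt{1+\dot\gamma(\bar t+rs)^2}\dif s$. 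By (ii), the traces may be replaced by their values at $\bar t$ up to an $o(1)$ error. The factor $\sqrt{1+\dot\gamma^2}$ does not converge pointwise, but $\gamma_r\to\gamma_\infty$ uniformly. Moreover the trace constant is independent of $s$ after the replacement, so what remains is $\int\varphi(s,\gamma_r(s))\sqrt{1+\dot\gamma(\bar t+rs)^2}\dif s$, i.e. the integral of $\varphi$ against the arclength measure of the curves $\gamma_r$. To pass to the limit I need the arclength measures of $\gamma_r$ to converge to that of $\gamma_\infty$. This holds at Lebesgue points of $\dot\gamma$: there $\dot\gamma(\bar t+r\cdot)\to\dot\gamma(\bar t)$ in $\mathbf L^1_{loc}$, so I would add this as a fourth a.e. condition. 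The limit is then $\int_{\mathrm{Graph}(\gamma_\infty)}\varphi\int\varrho\,\bs\Phi^\pm(\bar t,\gamma(\bar t),\xi)\dif\xi\dif\mathscr H^1$.

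Putting the three limits together gives \eqref{eq:normalxi} with the stated constant traces, and the same argument applied to the other three families gives \eqref{eq:normalxitilde}–\eqref{eq:normalzetatilde}. Isentropy of $\bs u^{\bar t}_\infty$ in the open half-spaces follows in the same way from the vanishing of the right-hand side, using test functions supported away from the boundary line.
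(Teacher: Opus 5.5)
Your proposal is correct and follows essentially the same route as the paper. It uses the same good times: a Lebesgue point of the kinetic traces along the graph, a Lebesgue point of $\dot\gamma$, and vanishing density $r^{-1}\bs\nu(B_r^\pm(\bar t,\gamma(\bar t)))\to 0$ of the dissipation on the open half-balls, obtained by Besicovitch differentiation as in Proposition~\ref{prop:besi}. It then tests \eqref{eq:kinetictrace} with the rescaled $\varphi_r$ and passes to the limit term by term, as the paper does.

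Your departures are only refinements. You use a strong (Bochner) Lebesgue point in $t$ instead of the weak one of Proposition~\ref{prop:lebp}, and the Lipschitz dependence of $\eta_\varrho,q_\varrho$ on $\bs u$ for the bulk term. You also make explicit the smooth-$\varrho$ approximation that justifies $|\mu_{\eta_\varrho}|\le C_\varrho\bs\nu$, which the paper leaves implicit.
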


\begin{proof}
We prove that $\bs u_\infty^{\bar t}$ satisfies \eqref{eq:normalxi}, the proof of the other three conditions being exactly the same. We show that the result holds for the set of $\bar t \in \mathbb R^+$  satisfying the following three conditions:

\vspace{0.3cm}
\noindent\textbf{1.} $(\bar t, \gamma(\bar t))$ is a Lebesgue point of the map $(t,x) \mapsto \bs \Phi^{\pm}(t,x, \cdot)$ in the weak topology with respect to the measure 
    $$
    \mu_\gamma := \mathscr H^1 \llcorner  \mr{Graph}\, (\gamma)
    $$
    i.e. for all $p \in C^0_c(\mathbb R^2\times \mathbb R; \, \mathbb R)$:
\begin{equation}\label{eq:fluxlebp}
\begin{aligned}
\lim_{r \to 0^+}
\frac{1}{\mu_\gamma\!\left(B_r\!\left(\bar{t},\gamma(\bar{t})\right)\right)}
& \int_{B_r\!\left(\bar{t},\gamma(\bar{t})\right)}
\int
p\!\left(\frac{t-\bar{t}}{r},\;\frac{x-\gamma(\bar{t})}{r},\;\xi\right)
\\
&
\cdot
\Bigl(
    \boldsymbol{\Phi}^{\pm}(t,x,\xi)
    -
    \boldsymbol{\Phi}^{\pm}\!\left(\bar{t},\gamma(\bar{t}),\xi\right)
\Bigr)
\,\mathrm{d}\xi
\,\mathrm{d}\mu_\gamma(t,x)
\;=\; 0.
\end{aligned}
\end{equation}

\vspace{0.3cm}
\noindent\textbf{2.} $\bar t$ is a Lebesgue point of the tangent of $\gamma$, i.e.
    \begin{equation}\label{eq:tanglebp}
       \lim_{r \to 0^+} \frac{1}{r}\int_{\bar t-r}^{\bar t+r}|\dot \gamma(s) - \dot \gamma(\bar t)| \dif s = 0.
    \end{equation}

\vspace{0.3cm}
\noindent\textbf{3.}  $\bar t$ is such that the rescaled dissipation vanishes in half spaces:
    \begin{equation}\label{eq:isehalf}
        \lim_{r \to 0^+}\frac{1}{r}\bs \nu \big( B_r^{\pm}(\bar t, \gamma(\bar t)) \big) = 0
    \end{equation}
    where $B_r^{\pm}$ are as in \eqref{eq:halfball}.

By Proposition \ref{prop:lebp}, \ref{prop:besi} in the appendix and the Lebesgue differentiation theorem, this set of $\bar t$ is of full Lebesgue  measure.

\vspace{0.5cm}
    Let $\bar t$ be a time satisfying 1., 2. and 3. above, and let $\varphi \in C^1_c(\mathbb R^2) $ be fixed, and consider the rescalings 
    $$
    \varphi^r(t, x) := \varphi\big(r^{-1}(t-\bar t), r^{-1}(x-\gamma(\bar t))\big).
    $$
    Using $\varphi^r$ in \eqref{eq:kinetictrace} we deduce 
     \begin{equation}\label{eq:blowupping}
\begin{aligned}
    \iint_{\Omega^{\pm}} \varphi^r(t,x) \dif \mu_{\eta_\rho}
    &= -\iint_{\Omega^{\pm}} 
        \left(
            \int \varrho(\xi)
            \begin{pmatrix}
                \bs\chi[\xi](\bs u) \\[4pt]
                \bs\psi[\xi](\bs u)
            \end{pmatrix}
            \cdot \nabla_{t,x} \varphi^r(t,x) \dif \xi
        \right) \dif x \dif t 
    \\[10pt]
    &\quad
        + \int_{\mathrm{Graph}(\gamma)} \varphi^r(t,x)
        \left(
            \int \varrho(\xi)\, \bs\Phi^{+}(t,x,\xi) \dif \xi
        \right)
        \dif \mathscr{H}^1(t,x)
    \\[14pt]
    &= -r \iint_{\Omega^{\pm}_r}
        \left(
            \int \varrho(\xi)
            \begin{pmatrix}
                \bs\chi[\xi](\bs u^{\bar t}_{r_j}) \\[4pt]
                \bs\psi[\xi](\bs u^{\bar t}_{r_j})
            \end{pmatrix}
            \cdot \nabla_{s,y} \varphi \dif \xi
        \right) \dif y \dif s
    \\[10pt]
    &\quad
        + \int_{\mathrm{Graph}(\gamma)}
        \varphi\!\left(\frac{t - \bar{t}}{r},\, \frac{x - \gamma(\bar{t})}{r}\right)
        \left(
            \int \varrho(\xi)\, \bs\Phi^{\pm}(t,x,\xi) \dif \xi
        \right)
        \dif \mathscr{H}^1(t,x)
\end{aligned}
\end{equation}
where we used in the second line the change of variables 
$$
s = r^{-1}(t- \bar t), \qquad y = r^{-1}(x-\gamma(\bar t))
$$
and 
$$
\Omega_r^{\pm} := \Big\{(s, y) \; | \; (\bar t + r s, \gamma(\bar t) + r y) \in \Omega^\pm \Big\}.
$$
By \eqref{eq:isehalf} we deduce
\begin{equation}\label{eq:muetaphizero}
    \lim_{r \to 0^+} \left|\frac{1}{r} \iint_{\Omega^{\pm}}\varphi^r(t,x) \dif \mu_{\eta}  \right| = \mc O(1)  \frac{1}{r} \lim_{r \to 0^+} |\bs \nu|\big(B^{\pm}_{Kr}(\bar t, \gamma(\bar t))\big) = 0
\end{equation}
where $K> 0$ is big enough so that $\mathrm{supp} \, \varphi \subset B_K(0)$.
Notice that 
\begin{equation}\label{eq:blowest}
    \begin{aligned}
          \Bigg|\Bigg(\iint_{\Omega^{\pm}_r}-&\iint_{\Omega^{\pm}_{\infty}} \Bigg) \Big(\int \varrho(\xi)\begin{pmatrix}
    \bs \chi[\xi](\bs u^{\bar t}_{r}) \\[4pt]
    \bs \psi[\xi](\bs u^{\bar t}_{r})
\end{pmatrix}\cdot \nabla_{s,y} \varphi \dif \xi \Big) \dif y \dif s \Bigg| \\[10pt]
         & \leq \Big|(\Omega^{\pm}_r\Delta \Omega^\pm_\infty) \cap B_{K}(0)\Big| \Big(\sup_{\bs u} |(\bs \chi, \bs \psi|\Big) \|\varphi\|_{C^1}\|\varrho\|_{C^0}  = \mc O(1) \cdot r
    \end{aligned}
\end{equation}
where the last equality follows from the tangency property \eqref{eq:tanglebp}, which readily implies the bound on the area of the symmetric difference in a ball:
$$
\Big|(\Omega^{\pm}_r\Delta \Omega^\pm_\infty) \cap B_{K}(0)\Big| = \mc O(1) \cdot r.
$$
Finally we have that by \eqref{eq:fluxlebp}, setting 
$$
p(t, x, z) := \varphi(t,x) \varrho(\xi)
$$
we obtain, setting 
$$
C = \lim_{r \to 0^+} \frac{\mu_\gamma(B_{Kr}(\bar t, \gamma(\bar t))}{r}
$$
\begin{equation}\label{eq:blowest1}
\begin{aligned}
&\lim_{r \to 0^+} \frac{1}{r}
  \int_{\mathrm{Graph}(\gamma)}
  \varphi\!\left(\frac{t-\bar{t}}{r},\, \frac{x-\gamma(\bar{t})}{r}\right)
  \left(\int_{\underline w}^{\overline w} \varrho(\xi)\,\boldsymbol{\Phi}^{\pm}(t,x,\xi)\,\mathrm{d}\xi\right)
  \mathrm{d}\mathscr{H}^{1}(t,x) \\[6pt]
&\quad= \lim_{r \to 0^+} \frac{\mu_\gamma(B_{Kr})}{r}
  \cdot \frac{1}{\mu_\gamma(B_{Kr})}
  \int_{B_{Kr}}
  \varphi\!\left(\frac{t-\bar{t}}{r},\, \frac{x-\gamma(\bar{t})}{r}\right)
  \left(\int_{\underline w}^{\overline w}  \varrho(\xi)\,\boldsymbol{\Phi}^{\pm}(t,x,\xi)\,\mathrm{d}\xi\right)
  \mathrm{d}\mu_\gamma(t,x) \\[6pt]
&\quad= C \lim_{r \to 0^+}
  \frac{1}{\mu_\gamma(B_{Kr})}
  \int_{B_{kr}}
  \varphi\!\left(\frac{t-\bar{t}}{r},\, \frac{x-\gamma(\bar{t})}{r}\right)
  \left(\int_{\underline w}^{\overline w}  \varrho(\xi)\,\boldsymbol{\Phi}^{\pm}\bigl(\bar{t},\gamma(\bar{t}),\xi\bigr)\,\mathrm{d}\xi\right)
  \mathrm{d}\mu_\gamma(t,x) \\[6pt]
&\quad= \lim_{r \to 0^+}
  \left(\int \varphi(s,y)\,\mathrm{d}\!\left[\tfrac{1}{r}\,\alpha^{r,(\bar{t},\gamma(\bar{t}))}_\sharp\,\mu_\gamma\right](s,y)\right)
  \cdot
  \left(\int_{\underline w}^{\overline w}  \varrho(\xi)\,\boldsymbol{\Phi}^{+}\!\bigl((\bar{t},\gamma(\bar{t})),\xi\bigr)\,\mathrm{d}\xi\right) \\[6pt]
&\quad= \left(\int_{\mathrm{Graph}(\gamma_\infty)} \varphi(s,y)\,\mathrm{d}\mathscr{H}^{1}(s,y)\right)
  \cdot
  \left(\int_{\underline w}^{\overline w}  \varrho(\xi)\,\boldsymbol{\Phi}^{+}\!\bigl((\bar{t},\gamma(\bar{t})),\xi\bigr)\,\mathrm{d}\xi\right).
\end{aligned}
\end{equation}
where we have used \eqref{eq:fluxlebp} in the second equality, and the fact that by \eqref{eq:tanglebp}, there holds that 
$$
\frac{1}{r} \alpha^{r, (\bar t, \gamma(\bar t))}_\sharp \, \mathscr H^1 \llcorner \mr{Graph} \, \gamma \longrightarrow \mathscr H^1 \llcorner \mr{Graph} \, \gamma_{\infty} \quad \text{weakly in $\mathscr M_{t,x}$ as $r \to 0^+$}
$$
where  the pushforward is through the map
$$
\alpha^{r, (\bar t, \gamma(\bar t))}(t, x) := (r^{-1}(t- \bar t), r^{-1}(x-\gamma(\bar t)).
$$
Therefore we deduce from \eqref{eq:blowupping}, using \eqref{eq:muetaphizero}, \eqref{eq:blowest} and \eqref{eq:blowest1}:
\begin{equation}
\begin{aligned}
0 &= \lim_{ j \to +\infty} \frac{1}{r_j}
    \Bigg[
        -\iint_{\Omega^{\pm}}
        \left(
            \int \varrho(\xi)
            \begin{pmatrix} \boldsymbol{\chi}[\xi](\boldsymbol{u}) \\ \boldsymbol{\psi}[\xi](\boldsymbol{u}) \end{pmatrix}
            \cdot \nabla_{t,x}\varphi^{r_j} \,\mathrm{d}\xi
        \right)
        \mathrm{d}x\,\mathrm{d}t \\
    &\qquad\qquad
        +\int_{\mathrm{Graph}(\gamma)}
        \varphi^{r_j}(t,x)
        \left(
            \int \varrho(\xi)\,\boldsymbol{\Phi}^{\pm}(t,x,\xi)\,\mathrm{d}\xi
        \right)
        \mathrm{d}\mathscr{H}^{1}(t,x)
    \Bigg] \\[10pt]
    &= \lim_{j \to +\infty} 
        -\iint_{\Omega_{r_j}^{\pm}}
        \left(
            \int \varrho(\xi)
            \begin{pmatrix} \boldsymbol{\chi}[\xi](\boldsymbol{u}^{\bar t}_{r_j}) \\ \boldsymbol{\psi}[\xi](\boldsymbol{u}^{\bar t}_{r_j}) \end{pmatrix}
            \cdot \nabla_{s,y}\varphi \,\mathrm{d}\xi
        \right)
        \mathrm{d}y\,\mathrm{d}s \\
    &\qquad\qquad
        +\lim_{j \to +\infty} \frac{1}{{r_j}}\Bigg[\int_{\mathrm{Graph}(\gamma)}
        \varphi^{r_j}(t,x)
        \left(
            \int \varrho(\xi)\,\boldsymbol{\Phi}^{\pm}(t,x,\xi)\,\mathrm{d}\xi
        \right)
        \mathrm{d}\mathscr{H}^{1}(t,x)
    \Bigg] \\[10pt]
&= -\iint_{\Omega^{\pm}_\infty}
    \left(
        \int \varrho(\xi)
        \begin{pmatrix} \boldsymbol{\chi}[\xi](\boldsymbol{u}^{\bar t}_\infty) \\ \boldsymbol{\psi}[\xi](\boldsymbol{u}^{\bar t}_\infty) \end{pmatrix}
        \cdot \nabla_{s,y}\varphi \,\mathrm{d}\xi
    \right)
    \mathrm{d}y\,\mathrm{d}s \\
&\qquad
    +\int_{\mathrm{Graph}(\gamma_\infty)}
    \varphi(t,x)
    \left(
        \int \varrho(\xi)\,\boldsymbol{\Phi}^{\pm}\!\bigl((\bar{t},\gamma(\bar{t})),\xi\bigr)
        \,\mathrm{d}\xi
    \right)
    \mathrm{d}\mathscr{H}^{1}(t,x).
\end{aligned}
\end{equation}
as wanted. To obtain the last line we used that for almost every $\xi$, $\bs \chi[\xi](\bs u_{r_j}^{\bar t})$ converges in $\mathbf L^1_{loc}$ to $\boldsymbol{\chi}[\xi](\boldsymbol{u}^{\bar t}_\infty)$ if $\bs u_{r_j}^{\bar t}$ converges to $\boldsymbol{u}^{\bar t}_\infty$ in $\mathbf L^1_{loc}$.
\end{proof}

\section{Lagrangian tools for isentropic solutions with boundary}\label{sec:hamlag}
In the rest of the paper we will use some tools related to the Lagrangian representation, already introduced in setting of conservation laws in \cite{AMT25, BBM17, Mar19}, inspired by the superposition principle for the continuity equation (see e.g. \cite{AMB08}). Since we have no available regularity on entropy/finite entropy solutions $\bs u$ of \eqref{eq:systemi}, the Lagrangian representation is the main tool at our disposal, and we use it to perform computations at the kinetic level of the kinetic entropies $\bs \chi[\xi], \wt{\bs \chi}[\xi]$.

In this paper, we only need to consider isentropic solutions. Therefore, in order to make the presentation completely elementary and self contained, we first demonstrate how in this case  the Lagrangian representation can be easily constructed starting from the level sets of Lipschitz Hamiltonians, which also allow to easily deduce some additional properties which are needed for the following.

\vspace{0.5cm}
 We recall below the following easy Proposition (see \cite[Theorem 1.3]{AMT25}, for a more general version).

\begin{prop}\label{prop:kiniso}
A function $\bs u: \Omega \to \mc U$ is an isentropic solution to \eqref{eq:systemi} in an open set $\Omega \subset \mathbb R^2$, i.e. for every smooth entropy-entropy flux pair $(\eta, q)$ there holds
$$
\eta(\bs u)_t + q(\bs u)_x = \mu_\eta = 0 \qquad \text{in $\mathscr D^\prime_{t,x}$},
$$
if and only if for almost every $\xi \in (\underline w, \overline w)$ there holds (recalling the discontinuous entropies of Section \ref{sec:entropies})
     \begin{equation}\label{eq:kin22}
        \; \; \partial_t \bs \chi[\xi](\bs u(t,x)) + \partial_x \bs \psi[\xi](\bs u(t,x)) =0  \qquad \text{in $\msc D^\prime$}\big(\Omega\big)
    \end{equation}
     \begin{equation}\label{eq:kin221}
        \; \; \partial_t \wt{\bs \chi}[\xi](\bs u(t,x)) + \partial_x \wt{ \bs \psi}[\xi](\bs u(t,x)) =0  \qquad \text{in $\msc D^\prime$}\big(\Omega\big)
    \end{equation}
   and for almost every $\zeta \in (\underline z, \overline z)$ there holds
     \begin{equation}\label{eq:kin22z}
        \; \; \partial_t \bs \upsilon[\zeta](\bs u(t,x)) + \partial_x \bs \varphi[\zeta](\bs u(t,x)) =0  \qquad \text{in $\msc D^\prime$}\big(\Omega\big)
    \end{equation}
     \begin{equation}\label{eq:kin221z}
        \; \; \partial_t \wt{\bs  \upsilon}[\zeta](\bs u(t,x)) + \partial_x \wt{ \bs \varphi}[\zeta](\bs u(t,x)) =0  \qquad \text{in $\msc D^\prime$}\big(\Omega\big)
    \end{equation}
\end{prop}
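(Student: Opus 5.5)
The plan is to prove both implications through the superposition formulas of the structure theorem of \cite{PT00} recalled in Section \ref{sec:entropies}, combined with a Fubini-type argument identical to the one in Corollary \ref{coro:isehalf}.

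\emph{Kinetic equations imply isentropy.} Let $(\eta,q)$ be a smooth entropy pair. Subtracting constants (which does not change $\mu_\eta$, since constants have zero distributional derivative), we may assume $\eta(\ubar w,\ubar z)=q(\ubar w,\ubar z)=0$. By \eqref{eq:entsup}, $\eta$ and $q$ are superpositions of $\bs\chi[\xi]$, $\bs\upsilon[\zeta]$ and $\bs\psi[\xi]$, $\bs\varphi[\zeta]$ with bounded weights $\varrho_1,\varrho_2$. For $\varphi\in C^1_c(\Omega)$, Fubini (all integrands are bounded and measurable in $(t,x,\xi)$) gives
\[
-\iint (\eta(\bs u)\varphi_t+q(\bs u)\varphi_x)=-\int \varrho_1(\xi)\iint(\bs\chi[\xi](\bs u)\varphi_t+\bs\psi[\xi](\bs u)\varphi_x)\dif\xi -\int\varrho_2(\zeta)\iint(\dots)\dif\zeta .
\]
The inner integrals vanish for a.e.\ $\xi,\zeta$ by \eqref{eq:kin22} and \eqref{eq:kin22z}, so $\mu_\eta=0$. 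Only the hypograph families are needed in this direction.

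\emph{Isentropy implies the kinetic equations.} For $\varrho\in C^\infty_c(\ubar w,\bar w)$, the converse part of the \cite{PT00} theorem shows that $\eta_\varrho,q_\varrho$ from \eqref{eq:etarhodef}--\eqref{eq:qrhodef} form a smooth entropy pair, since $\bs\Theta$ is smooth in all three variables. Isentropy therefore gives
\[
\int\varrho(\xi)\,F_\varphi(\xi)\dif\xi=0,\qquad F_\varphi(\xi):=\iint(\bs\chi[\xi](\bs u)\varphi_t+\bs\psi[\xi](\bs u)\varphi_x)\dif x\dif t ,
\]
for every $\varphi\in C^1_c(\Omega)$. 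Since $F_\varphi\in\mathbf L^\infty$ and $\varrho$ ranges over a dense class, $F_\varphi(\xi)=0$ for a.e.\ $\xi$. To make the exceptional set independent of $\varphi$, I would take a countable set of test functions dense in $C^1_c$ and argue exactly as in Corollary \ref{coro:isehalf}, using that $F_\varphi$ depends continuously on $\varphi$ in $C^1$, uniformly in $\xi$. This gives \eqref{eq:kin22}. The same argument applies to $\wt{\bs\chi}$ and to the $z$-families.

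\emph{Expected main obstacle.} The obstacle I expect is justifying that $\wt\eta_\varrho$ is a smooth entropy, since the theorem of \cite{PT00} is stated only for the hypograph entropies. The way around it is to observe that $\wt{\bs\chi}[\xi]=\bs\Theta[\xi]-\bs\chi[\xi]$ for a.e.\ $\bs u$. This holds because the line $\{w=\xi\}$ is negligible once $\xi$ is integrated against $\varrho$. Moreover, $\bs\Theta[\xi]$ is a smooth entropy, so $\xi\mapsto\int\varrho(\xi)\bs\Theta[\xi]\dif\xi$ is a smooth entropy pair by differentiation under the integral. Hence $\wt\eta_\varrho=\int\varrho\,\bs\Theta[\xi]\dif\xi-\eta_\varrho$ is smooth, and the argument above applies to it.
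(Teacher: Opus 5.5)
Your proposal is correct. The paper gives no proof of its own here, only a pointer to \cite[Theorem 1.3]{AMT25}, and your argument is the natural route behind that reference: superposition through \eqref{eq:entsup} for the ``if'' direction, and for the ``only if'' direction testing isentropy on $\eta_\varrho$, then removing the $\varphi$-dependence of the null set by the same Fubini/countable-density device as in Corollary \ref{coro:isehalf}.

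Your treatment of the epigraph family via $\wt{\bs\chi}[\xi]=\bs\Theta[\xi]-\bs\chi[\xi]$ off $\{w=\xi\}$ is also sound. It admits a small shortcut: each $(\bs\Theta[\xi],\bs\Xi[\xi])$ is itself a smooth entropy pair, so you can subtract \eqref{eq:kin22} from its equation directly for every $\xi$ outside the at most countable set where $\mathscr L^2(\{w=\xi\})>0$, without averaging against $\varrho$.
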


By Proposition \eqref{prop:tangentu}, blow ups $\bs u_\infty$ are isentropic solutions in the open sets $\Omega^\pm$.

\begin{coro}\label{prop:kinH}
    Let $\bs u: \Omega \to \mc U$ be an isentropic solution to \eqref{eq:systemi} in an open set $\Omega \subset \mathbb R^2$. Then for almost every $\xi \in (\underline w, \overline w)$ there exists an Hamiltonian $\br H[\xi](t,x) \in \mathrm{Lip}(\Omega)$ such that 
    $$
    \nabla_{t,x} \, \br H[\xi](t,x) = \begin{pmatrix}
        - \bs \psi[\xi](\bs u(t,x)) \\[6pt]
        \bs \chi[\xi](\bs u(t,x))
    \end{pmatrix} \qquad \text{for a.e. $(t,x) \in \Omega$}.
    $$
Similarly for the epigraph entropies, for almost every $\xi \in (\underline w, \overline w)$ there exists an Hamiltonian $\wt{\br H}[\xi](t,x) \in \mathrm{Lip}(\Omega)$ such that 
    $$
    \nabla_{t,x} \, \wt{\br H}[\xi](t,x) = \begin{pmatrix}
        - \wt{\bs \psi}[\xi](\bs u(t,x)) \\[6pt]
        \wt{\bs \chi}[\xi](\bs u(t,x))
    \end{pmatrix} \qquad \text{for a.e. $(t,x) \in \Omega$}.
    $$
     The same holds for the entropies $\bs \upsilon[\zeta], \wt{\bs \upsilon}[\zeta]$ related to the second Riemann invariant, for some other Hamiltonians $\br G[\zeta], \wt{\br G}[\zeta]$.
\end{coro}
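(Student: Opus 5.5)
The plan is to combine Proposition \ref{prop:kiniso} with the Poincaré lemma for divergence-free vector fields in the plane. By Proposition \ref{prop:kiniso}, for almost every $\xi \in (\underline w, \overline w)$ the bounded vector field
$$
X_\xi(t,x) := \bigl(\bs\chi[\xi](\bs u(t,x)),\, \bs\psi[\xi](\bs u(t,x))\bigr) \in \mathbf L^\infty(\Omega;\mathbb R^2)
$$
satisfies $\mathrm{div}_{t,x} X_\xi = 0$ in $\msc D'(\Omega)$, by \eqref{eq:kin22}. Equivalently, the rotated field $X_\xi^\perp := (-\bs\psi[\xi](\bs u), \bs\chi[\xi](\bs u))$ is curl-free in the distributional sense. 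Indeed, $\partial_x(-\bs\psi) - \partial_t(\bs\chi) = -\mathrm{div}\, X_\xi = 0$. A curl-free $\mathbf L^\infty$ field is locally a gradient, and the plan is to produce the potential $\br H[\xi]$ explicitly.

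The first step is to fix a good $\xi$ and mollify. On any convex (or simply connected) subdomain $\Omega' \Subset \Omega$, set $X^\varepsilon := X_\xi^\perp * \rho_\varepsilon$. This field is smooth and curl-free on $\Omega'$ for small $\varepsilon$, and bounded uniformly by $\sup|(\bs\chi,\bs\psi)|$. The classical Poincaré lemma gives $H^\varepsilon$ with $\nabla H^\varepsilon = X^\varepsilon$, normalized by $H^\varepsilon(p_0)=0$ at a fixed base point. These potentials are uniformly Lipschitz, so Arzelà--Ascoli yields a Lipschitz limit $H$. Its gradient is the weak$^\star$ limit of $X^\varepsilon$, which equals $X_\xi^\perp$ almost everywhere, since $X^\varepsilon \to X^\perp_\xi$ in $\mathbf L^1_{loc}$.

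The step I expect to be the main obstacle is the passage from local to global, because $\Omega$ is only assumed open. In the applications, $\Omega$ is one of the half-planes $\mr H^\pm$ (or $\Omega^\pm_\infty$), which are convex. For these the construction above works directly on an exhaustion by convex sets: each potential is unique up to a constant, so fixing the value at $p_0$ makes the potentials agree on overlaps, and $H$ is globally defined with Lipschitz constant $\sup|(\bs\chi,\bs\psi)|$. For a general open $\Omega$ I would do the same on each connected component, provided it is simply connected. If it is not, I would either restrict the statement to simply connected $\Omega$, which is the only case needed later, or build $\br H[\xi]$ locally on balls and use only those local versions.

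The second-family statements follow the same way. For $\wt{\br H}[\xi]$ I would apply the identical argument to \eqref{eq:kin221}, and for $\br G[\zeta]$ and $\wt{\br G}[\zeta]$ to \eqref{eq:kin22z} and \eqref{eq:kin221z}. The exceptional null sets of $\xi$ and $\zeta$ are exactly those coming from Proposition \ref{prop:kiniso}.
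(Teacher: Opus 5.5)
Your argument is correct and is the reasoning the paper leaves implicit when it calls this a corollary of Proposition \ref{prop:kiniso}. For a.e.\ $\xi$ the field $(\bs\chi[\xi](\bs u),\bs\psi[\xi](\bs u))$ is bounded and divergence-free, so its rotation is the gradient of a Lipschitz stream function $\br H[\xi]$. Your topological caveat is also well placed: on a non-simply connected $\Omega$ a global $\br H[\xi]$ can fail to exist, since a hole can act as a net source of flux. The paper only uses the statement on half-planes and on regions between two ordered Lipschitz curves, which are simply connected.
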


\subsection{Connection with the Lagrangian representation}
Let $\bar r$ be as in Proposition \ref{prop:localspeed}. For $\xi \in (\overline w - \bar  r, \overline w)$, consider the Hamiltonian $\br H[\xi]$, and consider the level sets
$$
\Gamma^\xi_\alpha := \Big\{(t, x) \in \Omega \; | \; \br H[\xi](t,x) = \alpha  \Big\}.
$$
Since, by $\xi \in (\overline w - \bar  r, \overline w)$ and by Proposition \ref{prop:localspeed}, there holds 
\begin{equation}\label{eq:xincr}
\partial_x \br H[\xi](t,x) = \bs \chi[\xi](\bs u(t,x)) > c \quad \text{for a.e. $(t,x)$}
\end{equation}
the level sets $\Gamma^\xi_\alpha$ are graphs of Lipschitz curves  $\gamma_\alpha^\xi$ parametrized by a closed subset of the timeline $\mathbb R$:
$$
\Gamma^{\xi}_\alpha  = \Big\{(t, \gamma_\alpha^\xi(t)) \; | \; t \in I \subset \mathbb R \Big\}.
$$
The same holds for $\wt {\br H}[\xi]$ for $\xi \in (\underline w, \underline w + \bar r)$. 
Define $\Gamma$ as the space of Lipschitz curves
$$
\Gamma = \Big\{(\gamma, I_\gamma)\; | \; I_\gamma \subset \mathbb R\; \text{is an closed set,} \quad \gamma: I_\gamma \to \mathbb R\;  \text{Lipschitz curve with $(t, \gamma(t)) \in \Omega$}  \Big\}
$$
endowed with the Hausdorff metric on graphs $\mr{dist}_{\mc H}$.
Consider the map $\mathcal R: \mathbb R \to \Gamma$ defined by
$$
\alpha \mapsto \mathcal R(\alpha) := \gamma^\xi_\alpha \in \Gamma
$$
sending each $\alpha \in \mathbb R$ to the curve corresponding to the level set $\Gamma_\alpha^\xi$.
Consider the finite, positive measure $\bs \omega_\xi \in \mathscr M(\Gamma)$ defined by 
\begin{equation}\label{eq:lagradef}
\bs \omega_\xi := \mathcal R_\sharp \mathscr L^1\llcorner A_\xi, \qquad A_\xi:= \mathrm{Im} \br H[\xi].
\end{equation}
We recall that here $\mathcal R_\sharp$ denotes the pushforward, and therefore $\bs \omega_\xi$ is defined by the property 
$$
\bs \omega_\xi(E) = \mathscr L^1\Big(\big\{\alpha \in A_\xi \; | \; \gamma_\alpha^\xi \in E \big\} \Big).
$$

\begin{defi}\label{def:lagradef}
   Let $\bs u$ be an isentropic solution to \eqref{eq:systemi} in an open set $\Omega \subset \mathbb R^2$, and let $\xi \in (\overline w - \bar r, \overline w)$, where $\overline w := \esssup_\Omega w$. We say that a measure $\bs \omega \in \msc M(\Gamma)$ is a Lagrangian representation for $\bs \chi[\xi](\bs u)$ if 
\begin{enumerate}
    \item For $\bs \omega$-almost every $\gamma$, $(t, \gamma(t))$ is a Lebesgue point of $\bs u$ for a.e. $t \in I_\gamma$, and  $\gamma$ is characteristic for $\bs \lambda_1[\xi](\bs u)$:
    \begin{equation}\label{eq:charomega}
    \dot \gamma(t) = \bs \lambda_1[\xi](\bs u(t,\gamma(t)) \qquad \text{for a.e. $t \in I_\gamma$}.
        \end{equation}
    \item Up to redefining $\bs \chi[\xi](\bs u)$  on a set of times of measure zero, we can recover it by superposition of the curves:
    \begin{equation}\label{eq:chisuper}
\bs \chi[\xi](\bs u(t, \cdot))\cdot \msc L^1 = (e_t)_\sharp \bs \omega\llcorner \Gamma_{t} \qquad \text{for all $t \in \mathbb R$}.
        \end{equation}
    where $e_t : \Gamma_t \to \mathbb R$ is the evaluation map $e_t(\gamma) = \gamma(t)$, defined on the set of curves $$\Gamma_t := \{\gamma \in \Gamma \; |\; t \in I_\gamma\}.$$
    \item For $\bs \omega$-almost every $\gamma$, there holds
    $$
    w(t,\gamma(t)) \geq \xi \quad \text{for a.e. $t \in I_\gamma$}.
    $$
\end{enumerate}
We call $\Gamma[\xi]$ the set of curves satisfying (1), (2), (3) where $\bs \omega$ is concentrated.
\end{defi}

We claim that $\bs \omega_\xi$ defined in \eqref{eq:lagradef} is a Lagrangian representation of $\bs \chi[\xi](\bs u)$.

\begin{prop}\label{prop:lagraprop}
   Let $\bs u$ be an isentropic solution to \eqref{eq:systemi} in an open set $\Omega \subset \mathbb R^2$, and let $\xi \in (\overline w - \bar r, \overline w)$, where $\overline w := \esssup_\Omega w$. Then the measure $\bs \omega_\xi$ defined in \eqref{eq:lagradef} is a Lagrangian representation of $\bs \chi[\xi](\bs u)$ in the sense of Definition \ref{def:lagradef}.
\end{prop}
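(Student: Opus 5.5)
The proof uses the Hamiltonian $\br H[\xi]$ of Corollary \ref{prop:kinH} and the coarea formula. It is convenient to first check the superposition property (2), and then the characteristic property (1) and the support property (3), both of which hold for $\mathscr L^1$-a.e. level $\alpha$.

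\emph{Superposition.} Fix $t$ such that $x\mapsto \br H[\xi](t,x)$ is Lipschitz with derivative $\bs\chi[\xi](\bs u(t,x))$ for a.e. $x$. By Fubini this holds for a.e. $t$, and we redefine $\bs\chi[\xi](\bs u)$ on the remaining null set of times. By \eqref{eq:xincr}, the map $x\mapsto \br H[\xi](t,x)$ is strictly increasing on each connected component of the slice $\Omega_t$. For $\alpha\in A_\xi$, the point $\gamma^\xi_\alpha(t)$ is then the unique $x$ with $\br H[\xi](t,x)=\alpha$, so $e_t\circ\mathcal R$ is the inverse of $\br H[\xi](t,\cdot)$ on its image. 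For any Borel $B\subset\mathbb R$ this gives
\[
(e_t)_\sharp\bs\omega_\xi\llcorner\Gamma_t(B)=\mathscr L^1\big(\br H[\xi](t,B\cap\Omega_t)\big)=\int_{B}\bs\chi[\xi](\bs u(t,x))\dif x,
\]
by the one-dimensional change of variables for monotone Lipschitz maps. This is \eqref{eq:chisuper}. Along the way one should check that $\mathcal R$ is Borel into $(\Gamma,\mathrm{dist}_{\mc H})$. This should follow from the continuity of $\br H[\xi]$, which gives upper semicontinuity of $\alpha\mapsto\Gamma^\xi_\alpha$ in the Hausdorff sense.

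\emph{Characteristic equation.} Since $\partial_x\br H\ge c$, the implicit function relation $\br H[\xi](t,\gamma_\alpha(t))=\alpha$ makes $\gamma_\alpha$ Lipschitz, with constant bounded by $\|\bs\psi\|_\infty/c$. Where things are differentiable we get $\dot\gamma_\alpha=-\partial_t\br H/\partial_x\br H=\bs\psi[\xi]/\bs\chi[\xi]=\bs\lambda_1[\xi](\bs u)$. The difficulty is that $\br H$ is only Lipschitz, and the formula must hold along a.e. level curve. I would handle this with the coarea formula. Let $N$ be the null set of points where $\br H[\xi]$ is not differentiable, where $\nabla\br H$ differs from $(-\bs\psi,\bs\chi)(\bs u)$, or which are not Lebesgue points of $\bs u$. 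Then
\[
\int_{\mathbb R}\mathscr H^1(N\cap\Gamma^\xi_\alpha)\dif\alpha=\int_N|\nabla\br H|=0,
\]
so for a.e. $\alpha$ the set $N$ meets $\Gamma^\xi_\alpha$ in an $\mathscr H^1$-null set. Projecting onto the $t$-axis, which is Lipschitz, this means that for a.e. $t\in I_\gamma$ the point $(t,\gamma_\alpha(t))$ lies outside $N$. At such points, and at times where $\gamma_\alpha$ is differentiable, differentiating the identity $\br H(t,\gamma_\alpha(t))=\alpha$ is legitimate and yields \eqref{eq:charomega}, together with the Lebesgue point property in (1).

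\emph{Support property.} At the good points just found we have $\partial_x\br H=\bs\chi[\xi](\bs u)\ge c>0$ there. Since $\bs\chi[\xi]$ vanishes on $\{w<\xi\}$, this forces $w(t,\gamma_\alpha(t))\ge\xi$, which is (3). Finally, $\bs\omega_\xi$ is concentrated on curves satisfying (1)–(3), because the bad levels $\alpha$ form an $\mathscr L^1$-null set.

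\textbf{Main obstacle.} I expect the main obstacle to be the coarea step: one must show that "a.e. $t$ along the curve" is controlled by $\mathscr H^1$ on the graph, and that the Lebesgue-point property holds there. A secondary issue is measurability of $\mathcal R$ together with the closedness of the domains $I_\gamma$ when $\Omega$ has boundary. There, level sets may exit $\Omega$, and $I_\gamma$ is the set of $t$ with $\alpha\in\br H[\xi](t,\Omega_t)$, which is relatively closed only inside the projection of $\Omega$. I would handle this by working in the half-spaces $\mr H^\pm$, where the slices $\Omega_t$ are half-lines.
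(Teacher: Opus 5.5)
Your proof of (2) is the same as the paper's Step 1. Your coarea argument for the Lebesgue-point property and for $\nabla\br H[\xi]\cdot(1,\dot\gamma_\alpha)=0$ along a.e.\ level curve is a legitimate substitute for the paper's Steps 2--3. The paper instead transfers Lebesgue-null sets to a.e.\ curve through (2) and Tonelli, using $\mathscr L^1\otimes(e_t)_\sharp\bs\omega_\xi\llcorner\Gamma_t=\bs\chi[\xi](\bs u)\,\mathscr L^2\ll\mathscr L^2$. The two are really the same identity: along level curves $\dif t=(\partial_x\br H[\xi]/|\nabla\br H[\xi]|)\dif\mathscr H^1$, so Lemma \ref{lemma:fubinis} is the coarea formula applied to $f\,\partial_x\br H[\xi]/|\nabla\br H[\xi]|$.

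The genuine gap is in (3). You deduce $w(t,\gamma_\alpha(t))\ge\xi$ from $\partial_x\br H[\xi]=\bs\chi[\xi](\bs u)\ge c$ at the good points, but this lower bound is not independent of the conclusion. Proposition \ref{prop:localspeed} gives $\bs\chi[\xi](w,z)\ge c$ only for $\xi\le w\le\xi+\bar r$. On $\{w<\xi\}$ one has $\bs\chi[\xi](\bs u)=\bs\psi[\xi](\bs u)=0$, hence $\nabla\br H[\xi]=0$. So ``$\bs\chi[\xi](\bs u)\ge c$ at $(t,\gamma_\alpha(t))$'' is just a restatement of ``$w(t,\gamma_\alpha(t))\ge\xi$'', and the argument is circular.

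The paper does write \eqref{eq:xincr} as an a.e.\ statement on $\Omega$. Read literally, however, it would force $w\ge\xi$ a.e.\ in $\Omega$ and make (3) empty. Yet (3) is used with real content later (e.g.\ in Propositions \ref{lemma:highcurves} and \ref{prop:lowertracepre}), where $w$ does fall below $\xi$ on parts of the domain. The same issue affects your division by $\partial_x\br H[\xi]$ when solving for $\dot\gamma_\alpha$.

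The paper's proof of (3) uses no positivity at all. By (2), for every $t$
\[
\bs\omega_\xi\bigl(\{\gamma\in\Gamma_t : w(t,\gamma(t))<\xi\}\bigr)=\int_{\mathbb R}\bs\chi[\xi](\bs u(t,x))\,\mathbf 1_{\{w(t,x)<\xi\}}\dif x=0,
\]
and Tonelli in $t$ gives $w(t,\gamma(t))\ge\xi$ for a.e.\ $t\in I_\gamma$ and $\bs\omega_\xi$-a.e.\ $\gamma$.

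Within your framework the repair is equally short: apply coarea to $Z:=N\cup\{\nabla\br H[\xi]=0\}$ rather than to $N$. Since $\int_Z|\nabla\br H[\xi]|=0$, for a.e.\ $\alpha$ you get $\mathscr H^1(Z\cap\Gamma^\xi_\alpha)=0$. Hence for a.e.\ $t\in I_{\gamma_\alpha}$ the vector $(-\bs\psi[\xi],\bs\chi[\xi])(\bs u(t,\gamma_\alpha(t)))=\nabla\br H[\xi](t,\gamma_\alpha(t))$ is nonzero. This forces $w(t,\gamma_\alpha(t))\ge\xi$, and then $\bs\chi[\xi]\ge c$ there by Proposition \ref{prop:localspeed}, which also justifies the division. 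The point is that a.e.\ level set misses the critical set $\{\nabla\br H[\xi]=0\}$, which in general has positive Lebesgue measure and so is not contained in your $N$.

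This critical set is also where two things you take from \eqref{eq:xincr} (as the paper does) are not justified as stated: the strict monotonicity of $\br H[\xi](t,\cdot)$ and the Lipschitz bound $\|\bs\psi\|_\infty/c$. That caveat is common to both proofs. For the computation in (2) it is harmless, since plateaus of the nondecreasing map $\br H[\xi](t,\cdot)$ affect only countably many levels $\alpha$ for each fixed $t$.

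Finally, handling the closedness of $I_\gamma$ by restricting to half-spaces proves less than stated. It would not cover the later use of the representation in regions such as $\hat\Omega$ in Proposition \ref{prop:lowertracepre}.
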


\begin{proof}

\textbf{1.} We start by verifying (2), which amounts to notice that for every measurable set $E \subset \mathbb R$ and for almost every $t > 0$ there holds
    \begin{equation}
\begin{aligned}
(e_t)_\sharp\,\boldsymbol{\omega}_\xi \llcorner \Gamma_t(E)
&= \boldsymbol{\omega}_\xi\!\Big(\{\gamma \in \Gamma_t \mid \gamma(t) \in E\}\Big) \\[4pt]
&= \mathcal{R}_\sharp\,\mathscr{L}^1 \llcorner A_\xi
    \Big(\{\gamma \in \Gamma_t \mid \gamma(t) \in E\}\Big) \\[4pt]
&= \mathscr{L}^1 \llcorner A_\xi
    \big(\{\alpha \mid t \in I_{\gamma_\alpha^\xi},\; \gamma_\alpha^\xi(t) \in E\}\big) \\[4pt]
&= \mathscr{L}^1\!\left(\mathbf{H}[\xi](t, E)\right) \\[4pt]
&= \int_E \boldsymbol{\chi}[\xi](t, x)\,\mathrm{d}x.
\end{aligned}
\end{equation}
    where we denote in the penultimate line $\br H[\xi](t, E)$ the image of the set $E$ through the map $\br H[\xi](t, \cdot)$, and the last equality follows from the fact that $\bs \chi[\xi](t,x) = \partial_x \br H[\xi](t,  x)$.
   This proves (2).

\vspace{0.5cm}
\textbf{2.} We now show that for $\bs \omega_\xi$-almost every $\gamma$, $(t, \gamma(t))$ is a Lebesgue point of $\bs u$ for a.e. $t \in I_\gamma$.  Let $S \subset \mathbb R^2$ be any set of zero Lebesgue measure (in particular, it can be the set of non-Lebesgue points of $\bs u$ by the Lebesgue differentiation theorem). From \eqref{eq:chisuper}, which was proved in step \textbf{1.} above, we deduce that for almost every $t \in \mathbb R$ it holds $(e_t)_\sharp\bs \omega_\xi \llcorner \Gamma_t \ll \msc L^1$, therefore
    $$
    \msc L^1 \otimes(e_t)_\sharp\bs \omega_\xi \llcorner \Gamma_t  \ll \msc L^2.
    $$
    Tonelli's theorem gives
    \begin{equation}\label{eq:Szero}
    \begin{aligned}
        \int_{\Gamma} \msc L^1\Big(\big\{t \in I_\gamma \; | \; (t,\gamma(t)) \in S\big\} \Big) \dif \bs \omega_\xi(\gamma) & = \int_{\mathbb R} \bs \omega_\xi\Big( \big\{\gamma \in \Gamma_t \; | \; (t, \gamma(t)) \in S\big\} \Big) \dif t\\
        & = \left(\msc L^1 \otimes (e_t)_\sharp \bs \omega_\xi\llcorner \Gamma_t \right)(S) = 0
    \end{aligned}
        \end{equation}
and this proves the claim.

\vspace{0.5cm}
    \textbf{3.}
    We now show that $\bs \omega_\xi$ is concentrated on characteristic curves, that is, it satisfies the second part of point (1) of Definition \ref{def:lagradef}. Since $\br H[\xi]$ is Lipschitz, its set of differentiability points has full measure in $\Omega$. Therefore, by the same argument used in Step \textbf{2.}, we can show that $\bs \omega_\xi$-almost every $\gamma$ is such that $(t, \gamma(t))$ is a differentiability point of $\br H[\xi]$ for almost every $t \in I_\gamma$ (and Lebesgue point for $\bs u$). Moreover, $\bs \omega_\xi$ is concentrated on curves which are level sets of $\br H[\xi]$. Therefore, for $\bs \omega_\xi$-almost every $\gamma$, there holds for almost every $t \in I_\gamma$ that
    $$
    \br H[\xi](t+ \delta, \gamma(t+\delta)) - \br H[\xi](t, \gamma(t)) = \nabla \br H[\xi](t, \gamma(t)) \cdot (1, \dot \gamma(t))\delta  + o(\delta)
    $$
    where $o(\delta)$ is a quantity approaching zero faster then $\delta$.
    By dividing by $\delta$ and taking the limit for $\delta \to 0$, and using the definition of $\bs \lambda_1$ in \eqref{eq:lambdadef}, we deduce 
    $$
    \begin{aligned}
    0 & =  \nabla \br H[\xi](t, \gamma(t)) \cdot (1, \dot \gamma(t)) =   \begin{pmatrix}
        - \bs \psi[\xi](\bs u(t,\gamma(t))) \\[6pt]
        \bs \chi[\xi](\bs u(t,\gamma(t))
    \end{pmatrix} \cdot (1, \dot \gamma(t)) \\[10pt]
    & = \bs \chi[\xi](\bs u(t, \gamma(t)))  \begin{pmatrix}
        -\bs \lambda_1[\xi](\bs u(t,\gamma(t))) \\[6pt]
        1
    \end{pmatrix} \cdot (1, \dot \gamma(t)) 
      \end{aligned}
    $$
    
But this implies that for $\bs \omega_\xi$-almost every curve, and for almost every $t \in I_\gamma$, using that by the assumption $\xi \in (\overline w-\bar  r, \overline w)$ there holds $\bs \chi[\xi](\bs u(t,\gamma(t)) \geq c > 0$ for a.e. $t$, there holds $\dot \gamma(t) = \bs \lambda_1[\xi](\bs u(t, \gamma(t)))$.

\textbf{4.}
To prove (3), we first observe that for every $t \in \mathbb R$, 
$$
\begin{aligned}
\bs \omega_\xi & \left(\big\{ \gamma \in \Gamma_t \; | \; \xi > w(t, \gamma(t)\big)\big\} \right) \\[10pt]
 =\,&  (e_t)_{\sharp} \bs \omega_\xi\llcorner\Gamma_t \Big(\big\{x \;  | \; \xi > w(t,x) \big\} \Big) \\[10pt]
 = \, & \int_{\mathbb R} \bs \chi[\xi](\bs u(t,x)) \mathbf 1_{\{x \;  | \; \xi > w(t,x)\}}(t,x) \dif x = 0
\end{aligned}
$$
by definition of $\bs \chi[\xi](\bs u)$, and then we proceed as in \eqref{eq:Szero} using Tonelli's theorem to deduce
$$
\begin{aligned}
 & \int_{\Gamma} \msc L^1 \Big(\big\{t \in I_\gamma \; | \;\xi > w(t, \gamma(t)\big\} \Big) \dif \bs \omega_\xi(\gamma) \\
 =\, &  \int_{\mathbb R} \bs \omega_\xi\Big( \big\{\gamma \in \Gamma_t \; | \; \xi > w(t, \gamma(t)\big) \big\} \Big) \dif t= 0.
        \end{aligned}
$$

\end{proof}

\begin{remark}\label{rem:oderderd}
    Recall that $\bs \omega_\xi = \mathcal{R}_\sharp \mathscr L^1 \llcorner A_\xi$ is concentrated on curves which are level sets of the Lipschitz function $\br H[\xi](t,x)$. Therefore, by \eqref{eq:xincr}, we can additionally require that the set $\Gamma[\xi]$ of curves where $\bs \omega_\xi$ is concentrated satisfies the following property. Let $\gamma_1, \gamma_2 \in \Gamma[\xi]$, and let  $t \in I_{\gamma_1} \cap I_{\gamma_2}$ for some $t \in \mathbb R$ and $\gamma_1(t) < \gamma_2(t)$. Then there holds
    $$
\gamma_1(s) < \gamma_2(s) \quad \forall \; s \in I_{\gamma_1}\cap I_{\gamma_2}
    $$
\end{remark}

\begin{remark}\label{rem:levellagr}
For $\xi \notin (\overline w - \bar r, \overline w)$, the condition $\bs \chi[\xi](\bs u(t,x)) > 0$ may fail to hold for some, or even all, $(t,x)$. This does not, in principle, prevent us from obtaining a Lagrangian representation in this regime, still by considering the level sets of the Hamiltonian $\br H[\xi]$. However, one may then expect pathological (in our setting) behaviors to arise, such as curves that reverse their time orientation or the formation of cycles (see, e.g., \cite{BG22}).

\vspace{0.5cm}
Let $I_\gamma = \cup (a_i, b_i)$ be the union of the maximal connected components of $I_\gamma$. Since $\gamma$ is of the form $\gamma = \gamma_\alpha^\xi$ for some $\alpha$ (i.e. it is a level set of $\br H[\xi]$) it must hold that 
$$
(a_i, \gamma(a_i+)), \;  (b_i, \gamma(b_i-)) \in \partial \Omega
$$
i.e. it can end only at points where it touches to boundary of $\Omega$.
\end{remark}

The same can be done for the entropies $\wt{\bs \chi}[\xi]$, for $\xi \in (\underline w, \underline w + \bar r)$. The definition will be entirely symmetric. 
\begin{defi}\label{def:lagradefepi}
  Let $\bs u$ be an isentropic solution in an open set $\Omega \subset \mathbb R^2$ and let $\xi \in (\underline w, \underline w + \bar r)$, where $\underline w := \essinf_\Omega w$. We  say that $\wt{\bs \omega}$ is a Lagrangian representation for $\wt{\bs \chi}[\xi](\bs u)$ if
\begin{enumerate}
    \item For $\wt{\bs \omega}$-almost every $\gamma$, $(t, \gamma(t))$ is a Lebesgue point of $\bs u$ for a.e. $t \in I_\gamma$, and  $\gamma$ is characteristic for $\bs \lambda_1[\xi](\bs u)$:
    \begin{equation}\label{eq:charomega1}
    \dot \gamma(t) = \bs \lambda_1[\xi](\bs u(t,\gamma(t))) \qquad \text{for a.e. $t \in I_\gamma$}.
        \end{equation}
    \item Up to redefining $\wt{\bs \chi}[\xi](\bs u)$  on a set of times of measure zero, we can recover it by superposition of the curves:
    \begin{equation}\label{eq:chisuper1}
\wt{\bs \chi}[\xi](\bs u)(t, \cdot)\cdot \msc L^1 = (e_t)_\sharp \wt{\bs \omega} \llcorner \Gamma_t  \qquad \text{for all $t \in \mathbb R$}.
        \end{equation}
     \item For $\wt{\bs \omega}$-almost every $\gamma$, there holds
    $$
    w(t,\gamma(t)) \leq \xi \quad \text{for a.e. $t \in I_\gamma$}.
    $$
\end{enumerate}
We call $\wt \Gamma[\xi]$ the set of curves satisfying (1), (2) where $\wt{\bs \omega}$ is concentrated.
\end{defi}

The existence of a representation $\wt{\bs \omega}$ can clearly be obtained in the same way as $\bs \omega$.
In particular, again we consider the finite, positive measure $\wt{\bs \omega}_\xi \in \mathscr M(\Gamma)$ defined by 
\begin{equation}\label{eq:lagradef}
\bs \omega_\xi := \mathcal R_\sharp \mathscr L^1\llcorner \wt{A}_\xi, \qquad \wt A_\xi:= \mathrm{Im} \wt{\br H}[\xi].
\end{equation}
We recall the following standard fact. In the same way one verifies that $\wt{\bs \omega}$ is a Lagrangian representation of $\wt{\bs \chi}[\xi]$.

\begin{lemma}\label{lemma:fubinis}
    Let $\xi \in (\overline w-\bar r, \overline w)$. For every measurable $f \in \mathbf L^1(\Omega)$ and every measurable subset $A \subset \Omega$, there holds
\begin{equation}
    \int_A \bs \chi[\xi](\bs u(t,x)) f(t,x) \dif x \dif t  = \int_{\Gamma} \Bigg(\int_{\mathbb R} f(t, \gamma(t)) \mathbf 1_{\{t \in I_\gamma \, : \, \gamma(t) \in A_t\}}  \dif t \Bigg) \dif \bs \omega_\xi(\gamma).
\end{equation}
where $A_t = \{x \; | \; (t,x) \in A\}$ is the time section of $A$.
\end{lemma}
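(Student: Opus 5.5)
The plan is to reduce the identity to the superposition property \eqref{eq:chisuper} of Proposition \ref{prop:lagraprop} and then apply Tonelli's theorem on $\mathbb R\times\Gamma$.

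First assume $f\ge 0$ is Borel (a Lebesgue-measurable $f$ can be modified on a null set, and this does not change either side; see below). For a fixed time $t$, \eqref{eq:chisuper} says that the measure $\bs\chi[\xi](\bs u(t,\cdot))\,\mathscr L^1$ is the pushforward of $\bs\omega_\xi\llcorner\Gamma_t$ under $e_t$. Testing this equality with the nonnegative Borel function $x\mapsto f(t,x)\mathbf 1_{A_t}(x)$ gives
$$
\int_{A_t}\bs\chi[\xi](\bs u(t,x))f(t,x)\dif x=\int_{\Gamma_t} f(t,\gamma(t))\,\mathbf 1_{\{\gamma(t)\in A_t\}}\dif\bs\omega_\xi(\gamma).
$$
This holds for every $t$, after the redefinition of $\bs\chi[\xi](\bs u)$ on a null set of times, which does not affect the left-hand side of the lemma.

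Next I integrate this identity in $t$. On the left, Fubini in $(t,x)$ gives $\int_A\bs\chi[\xi](\bs u)f$. On the right, I want to exchange $\dif t$ and $\dif\bs\omega_\xi$, which is Tonelli's theorem applied to the function
$$
(t,\gamma)\mapsto \mathbf 1_{\{t\in I_\gamma\}}\,\mathbf 1_A(t,\gamma(t))\,f(t,\gamma(t))
$$
on $\mathbb R\times\Gamma$. This produces exactly $\int_\Gamma\big(\int_{\mathbb R} f(t,\gamma(t))\mathbf 1_{\{t\in I_\gamma,\ \gamma(t)\in A_t\}}\dif t\big)\dif\bs\omega_\xi$. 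For $f\in\mathbf L^1$ of arbitrary sign, I split $f=f^+-f^-$. Finiteness of the right side follows from the left side, since $\bs\chi\le C$, so the two parts can be subtracted.

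The main obstacle is joint measurability of the set $\{(t,\gamma): t\in I_\gamma\}$ and of the evaluation map $(t,\gamma)\mapsto\gamma(t)$ on it, with $\Gamma$ carrying the Hausdorff metric on graphs. I would handle this by noting the following. The set $\{(t,\gamma):t\in I_\gamma\}$ equals $\{(t,\gamma): \mathrm{dist}((t,\cdot),\mathrm{Graph}\,\gamma)\text{ attained on }\{t\}\times\mathbb R\}$, which is closed in the product topology because graphs are closed and vary continuously in Hausdorff distance. Also, the evaluation is continuous there, thanks to the uniform Lipschitz bound on the curves, which comes from $|\bs\lambda_1[\xi]|\le C$.

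Alternatively, I would avoid this issue by working in the parameter $\alpha$. Here $\bs\omega_\xi=\mathcal R_\sharp\mathscr L^1\llcorner A_\xi$, and $(t,\alpha)\mapsto\gamma^\xi_\alpha(t)$ is the inverse of the map $x\mapsto\br H[\xi](t,x)$. That map is strictly increasing in $x$ by \eqref{eq:xincr}, so the inverse is Borel.

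Finally, I handle Lebesgue-null modifications of $f$ or $A$. If $S$ is a null set, the computation \eqref{eq:Szero} shows that $\bs\omega_\xi$-a.e. curve meets $S$ only on a $\mathscr L^1$-null set of times. Hence both sides are unchanged, and the identity extends from Borel data to all measurable $f$ and $A$.
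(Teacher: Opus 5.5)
Your proposal is correct and follows the same route as the paper. You test the fixed-time superposition identity \eqref{eq:chisuper} against $x\mapsto f(t,x)\mathbf 1_{A_t}(x)$, integrate in $t$, and exchange the integrals by Fubini--Tonelli. The difference is that you apply \eqref{eq:chisuper} directly to nonnegative Borel functions and split $f=f^+-f^-$, whereas the paper first treats continuous $f$ and then approximates in $\mathbf L^1$ with a dominated-convergence argument; your version is cleaner, and your $\alpha$-parametrization remark settles the joint measurability and null-set issues that the paper leaves implicit.
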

Clearly, the same result holds for $\wt{\bs \chi}[\xi]$ for $\xi \in (\underline w, \underline w + \bar r)$.
\begin{proof}
The definition of pushforward of point (2) of the Definition \ref{def:lagradef} is equivalent to 
$$
\int_E \bs \chi[\xi](\bs u(t,x)) \varphi(x) \dif x = \int_\Gamma \varphi(\gamma(t)) \dif \bs \omega_\xi(\gamma) \qquad \forall \; \varphi \in C^0, \quad \text{$E$ measurable}.
$$
    Therefore assume first that $f \in C^0$, then in this case the result is just an application of Fubini's theorem
    \begin{equation}
        \begin{aligned}
             \iint_A  \bs \chi[\xi](\bs u(t,x)) f(t,x) \dif x \dif t  & = \int_{\mathbb R} \int_{A_t} \bs \chi[\xi](\bs u(t,x)) f(t,x) \dif x \dif t \\[6pt]
             & = \int_{\mathbb R} \int_{\Gamma} \mathbf{1}_{\{\gamma(t) \in A_t\}} f(t,\gamma(t)) \dif  \bs \omega_\xi(\gamma) \dif t \\[6pt]
             &=\int_{\Gamma} \Bigg(\int_{\mathbb R} f(t, \gamma(t)) \mathbf 1_{\{\gamma(t) \in A_t\}}  \dif t \Bigg) \dif \bs \omega_\xi(\gamma).
        \end{aligned}
    \end{equation}
Next, for $f \in \mathbf L^1$, we consider any smooth approximation $f^\varepsilon$ of $f$ converging in $\mathbf L^1$ to $f$, and we notice that by using the smooth case we have, using  the dominated convergence theorem to bring the limits inside the integrals 
 \begin{equation}
        \begin{aligned}
        \iint_A  \bs \chi[\xi](\bs u(t,x)) f(t,x) \dif x \dif t  
           & =  \lim_{\varepsilon \to 0} \int_A  \bs \chi[\xi](\bs u(t,x)) f^\varepsilon(t,x) \dif x \dif t \\[6pt]
            & = \int_{\mathbb R} \int_{\Gamma}\mathbf 1_{\{\gamma(t) \in A_t\}}  \lim_{\varepsilon \to 0} f^\varepsilon(t,\gamma(t)) \dif  \bs \omega_\xi(\gamma) \dif t\\[6pt]
            &  = \int_{\mathbb R} \int_{\Gamma}\mathbf 1_{\{\gamma(t) \in A_t\}}f(t,\gamma(t)) \dif  \bs \omega_\xi(\gamma) \dif t
        \end{aligned}
    \end{equation}
    where we used that $f^\varepsilon(t, \cdot)$ converges pointwisely almost everywhere to $f(t, \cdot)$ for almost every $t$, and therefore since $(e_t)\sharp \bs \omega_\xi\ll \mathscr L^1$ we have that $f^\varepsilon(t,\gamma(t))$ converges to $f(t,\gamma(t))$ for $\bs \omega_\xi$-almost every $\gamma$ for almost every $t$.
\end{proof}

\vspace{0.3cm}
It is clear that the analogous representations can be obtained for the entropies $\bs \upsilon[\zeta], \wt{\bs \upsilon}[\zeta]$ related to the second Riemann invariant, and we call them $\bs \eta_\zeta$, $\wt{\bs \eta}_\zeta$.

\vspace{0.3cm}
The following standard Lemma is needed for later (see e.g \cite[Lemma 5]{Mar22}).
\begin{lemma}\label{lemma:traces}
Assume that $\gamma:(t_1,t_2)\subset \mathbb R\to \R$ is a Lipschitz curve and that for $\msc L^1$-a.e. $t \in (t_1,t_2)$ the point $(t, \gamma(t))$ is a Lebesgue point of $\bs u \in \mathbf L^\infty(\Omega; \,\mathbb R^2)$. Then
\begin{equation}\label{E_trace}
\lim_{\delta \to 0} \int_{t_1}^{t_2} \frac{1}{\delta}\int_{\gamma(t)-\delta}^{\gamma(t)+\delta}|\bs u(t,x)-\bs u(t,\gamma(t))|\dif x \dif t = 0.
\end{equation}
\end{lemma}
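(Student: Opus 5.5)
The plan is to reduce \eqref{E_trace} to a density statement, using the Lebesgue point hypothesis at almost every time together with a uniform bound. For a.e. $t\in(t_1,t_2)$ we set
\[
F_\delta(t) := \frac{1}{\delta}\int_{\gamma(t)-\delta}^{\gamma(t)+\delta}|\bs u(t,x)-\bs u(t,\gamma(t))|\dif x .
\]
Here $\bs u(t,\gamma(t))$ means the Lebesgue value of $\bs u$ at that point. Since $\bs u\in\mathbf L^\infty$, we have $0\le F_\delta(t)\le 4\|\bs u\|_\infty$ uniformly. Once we know $F_\delta\to 0$ in $\mathbf L^1(t_1,t_2)$, dominated convergence closes the argument. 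The difficulty is that being a Lebesgue point in $\R^2$ says nothing directly about the single time slice $\{t\}\times(\gamma(t)-\delta,\gamma(t)+\delta)$. The estimate therefore has to be obtained after integrating in time, not pointwise in $t$.

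The key step is an averaging argument. We integrate $F_\delta$ over a short time window $(s,s+\delta)$ and change variables along the curve. Because $\gamma$ is Lipschitz with constant $L$, the set $\{(t,x): t\in(s,s+\delta),\ |x-\gamma(t)|<\delta\}$ is contained in the ball $B_{(L+2)\delta}(s,\gamma(s))$. This gives
\[
\frac1\delta\int_s^{s+\delta} F_\delta(t)\dif t \le \frac{C}{\delta^2}\iint_{B_{(L+2)\delta}(s,\gamma(s))}|\bs u(t,x)-\bs u(s,\gamma(s))|\dif x\dif t + \frac1\delta\int_s^{s+\delta}2\delta\,\frac{|\bs u(s,\gamma(s))-\bs u(t,\gamma(t))|}{\delta}\dif t .
\]
The first term tends to $0$ for a.e. $s$ by the Lebesgue point hypothesis. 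It is bounded by $C\|\bs u\|_\infty$, so its integral over $s$ tends to $0$ by dominated convergence. The second term involves $g(t):=\bs u(t,\gamma(t))$, which is a bounded measurable function of one variable. After integrating in $s$ it becomes $\int\frac1\delta\int_s^{s+\delta}|g(s)-g(t)|\dif t\dif s$, and this tends to $0$ by the one-dimensional Lebesgue differentiation theorem, or equivalently by continuity of translations in $\mathbf L^1$. Finally, Fubini gives
\[
\int_{t_1}^{t_2}F_\delta(t)\dif t \le \int \frac1\delta\int_s^{s+\delta}F_\delta(t)\dif t\dif s + \mathcal O(\delta),
\]
where the $\mathcal O(\delta)$ accounts for boundary effects near $t_1$ and $t_2$, which are controlled by the uniform bound. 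Combining the three estimates yields \eqref{E_trace}.

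The main obstacle is the comparison between $\bs u(t,x)$ and the Lebesgue value at the nearby point $(s,\gamma(s))$ instead of at $(t,\gamma(t))$. This is what forces the splitting through the one-dimensional function $g$, and it requires care about measurability. For $g$, we use that the precise representative of $\bs u$ is Borel on the set of Lebesgue points, so $g$ is measurable. For $F_\delta$, we first work with a Borel representative of $\bs u$, so that $F_\delta$ is measurable by Fubini, and then check that the result does not depend on the representative. This holds because $\bs u$ is only evaluated inside integrals in $x$ for a.e. $t$.
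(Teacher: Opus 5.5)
Your proof is correct. The paper does not prove this lemma; it quotes it as standard and refers to \cite[Lemma 5]{Mar22}. So your argument supplies a self-contained proof rather than reproducing one from the text.

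You identify the real obstruction correctly: a Lebesgue point in $\mathbb R^2$ gives no control on a single slice $\{t\}\times(\gamma(t)-\delta,\gamma(t)+\delta)$. Averaging over the window $(s,s+\delta)$ resolves it cleanly:
\begin{itemize}
\item The inclusion of the resulting region in $B_{(L+2)\delta}(s,\gamma(s))$ turns the slice integrals into a two-dimensional ball average at the good point $(s,\gamma(s))$.
\item The cost of replacing $g(t)$ by $g(s)$ is paid by one-dimensional Lebesgue differentiation, or continuity of translations, for the bounded measurable function $g$.
\item Dominated convergence in $s$ together with the Fubini identity
\[
\int_{t_1}^{t_2}\frac1\delta\int_s^{s+\delta}F_\delta(t)\,\mathbf 1_{(t_1,t_2)}(t)\dif t\dif s=\int_{t_1}^{t_2}F_\delta(t)\,\frac{|(t-\delta,t)\cap(t_1,t_2)|}{\delta}\dif t
\]
closes the argument. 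The only loss is at most $4\|\bs u\|_{\infty}\delta$, coming from $t\in(t_1,t_1+\delta)$.
\end{itemize}
An alternative route would use Lusin's and Egorov's theorems to select a large compact set of times on which $g$ is continuous and the Lebesgue-point convergence is uniform, and then cover it by intervals of length $\delta$. Your continuous averaging in $s$ makes that uniformity unnecessary.

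Some cosmetic points:
\begin{itemize}
\item The remark that dominated convergence closes the argument once $F_\delta\to0$ in $\mathbf L^1(t_1,t_2)$ is circular, since that convergence is exactly the claim. You can delete it.
\item The constant $C$ in front of the ball integral can be taken equal to $1$, because you only enlarge the domain of integration.
\item For $s$ near $t_2$, extend $F_\delta$ by zero outside $(t_1,t_2)$, as in the identity above.
\item The balls $B_{(L+2)\delta}(s,\gamma(s))$ only need to lie in $\Omega$ for $\delta$ small depending on $s$. This is automatic, since a Lebesgue point is an interior point of the open set $\Omega$. Any bounded extension of $\bs u$ outside $\Omega$ keeps the uniform bound needed for dominated convergence.
\end{itemize}
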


\section{Half space Liouville-type theorem}\label{sec:HSL}

In this section we prove the following half-space Liouville-type theorem, which is the main technical result of the paper.

\begin{thm}\label{thm:Liuv1}
    Let $\bs u\in \mathbf L^\infty(\mr{H}^{\pm}, \mc U)$ be an isentropic solution of \eqref{eq:systemi} in an half space $\mr{H}^{\pm}$, i.e. for some $\vec n = (n_t, n_x) \in \mathbf S^1$ with $n_x \neq 0$:
    \begin{equation}\label{eq:equa}
\mr{H}^{\pm}  := \Big\{(t,x) \, | \, x \gtrless \gamma_b(t), \quad t \in \mathbb R \Big\}, \qquad \gamma_b(t) := -\frac{n_t}{n_x} \cdot t
 \end{equation}
    Assume that $\bs u$ has constant  kinetic normal traces $\bs \Phi^\pm(\xi), \bs \Psi^\pm(\zeta)$, $\wt{\bs \Phi}^\pm(\xi), \wt{\bs \Psi}^\pm(\zeta)$ i.e. it satisfies \eqref{eq:normalxi}, \eqref{eq:normalxitilde}, \eqref{eq:normalzeta}, \eqref{eq:normalzetatilde}. Then $\bs u \equiv (w_c^\pm, z_c^\pm)$ is a constant function, with 
    $$
   w_c := \esssup \, \big\{ \xi \in \mathbb R\; | \;  \bs \Phi(\xi )  \neq 0\Big\} , \qquad  z_c := \esssup \, \big\{ \zeta \in \mathbb R\; | \;  \bs \Psi(\zeta )  \neq 0\Big\}.
    $$
\end{thm}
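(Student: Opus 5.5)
We work in $\mr H^+$; the case of $\mr H^-$ is symmetric. The plan is first to show that $w$ is constant, then to identify the constant through the trace, and to treat $z$ by symmetry. Set $\bar w := \esssup_{\mr H^+} w$ and $\ubar w := \essinf_{\mr H^+} w$.

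\textbf{Step 1: characteristics for $\xi$ near $\bar w$.} For $\xi \in (\bar w - \bar r, \bar w)$ we use the Lagrangian representation $\bs\omega_\xi$ of Proposition \ref{prop:lagraprop}, built from the level sets of $\br H[\xi]$ (Corollary \ref{prop:kinH}).

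By Remark \ref{rem:levellagr}, each curve either lives for all times or starts and ends on $\mathrm{Graph}(\gamma_b)$.

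The constant kinetic trace \eqref{eq:halfcont} says that $\br H[\xi]$, restricted to the boundary line, is affine in $t$ with slope proportional to $\bs\Phi^+(\xi)$. This follows by integrating the divergence identity against test functions concentrating on the line.

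Hence, along the boundary, the flux of characteristics entering the domain is uniform in time. There are three cases:
\begin{itemize}
\item if $\bs\Phi^+(\xi)$ has one sign, curves enter (or exit) at a constant rate;
\item if $\bs\Phi^+(\xi) = 0$, no curve crosses the boundary, so every curve is defined for all $t \in \mathbb R$.
\end{itemize}

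\textbf{Step 2: a Liouville argument in the style of \cite{AMT25}, adapted to the boundary.} By genuine nonlinearity (Proposition \ref{prop:localspeed}), $\xi \mapsto \bs\lambda_1[\xi]$ is strictly increasing. Characteristics of $\bs\chi[\xi']$ with $\xi' > \xi$ are therefore strictly faster than those of $\bs\chi[\xi]$ at the same point.

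We combine this with the ordering of Remark \ref{rem:oderderd} and with the inclusion of supports: curves of $\bs\omega_{\xi'}$ lie in $\{w \ge \xi'\} \subset \{w \ge \xi\}$. This gives a monotone "slope gap" between the two families.

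Over long times, the curves of $\bs\omega_{\xi'}$ would have to cross the ordered foliation of $\bs\omega_\xi$. Since the density $\bs\chi[\xi]\ge c$ is bounded, they could do so only by exiting through the boundary.

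Counting curves in large rescaled boxes $[-T,T]\times$(strip), the boundary flux balance is linear in $T$ with rates given by the constants $\bs\Phi^+(\xi)$ and $\bs\Phi^+(\xi')$. The interior mass defect, on the other hand, would be quadratic in $T$. Comparing the two forces $\{w \ge \xi'\}$ to be negligible for every $\xi' < \bar w$ unless $w \ge \xi'$ a.e. Consequently $w \equiv \bar w$ a.e., or $\{w>\xi\}$ is empty.

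Applying the same argument to $\wt{\bs\chi}[\xi]$ for $\xi$ near $\ubar w$ shows that $\ubar w = \bar w$, so $w$ is constant. The symmetric argument with $\bs\upsilon, \wt{\bs\upsilon}$ shows that $z$ is constant.

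\textbf{Step 3: identification of the constant.} Once $\bs u \equiv (w_0,z_0)$, the left-hand side of \eqref{eq:halfcont} can be computed explicitly by the divergence theorem. It equals $\bs\chi[\xi](w_0,z_0)\,(1,\bs\lambda_1[\xi](w_0,z_0))\cdot\vec n$ times the boundary measure, so
$$\bs\Phi^+(\xi)=\mathbf 1_{\{\xi\le w_0\}}\,\bs\Theta[\xi](w_0,z_0)\big(n_t+n_x\bs\lambda_1[\xi](w_0,z_0)\big),$$
up to the sign convention of the outward normal.

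This vanishes for $\xi > w_0$. For $\xi$ slightly below $w_0$, we have $\bs\Theta \ge c$ by Proposition \ref{prop:localspeed}. Moreover $\bs\lambda_1[\xi]$ is strictly monotone in $\xi$, so the factor $n_t + n_x\bs\lambda_1[\xi]$ vanishes for at most one $\xi$. Hence $\bs\Phi^+ \ne 0$ on a set of positive measure arbitrarily close to $w_0$, which gives $w_0 = \esssup\{\bs\Phi^+\ne0\}$. The same computation identifies $z_0$.

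\textbf{Main obstacle.} The main difficulty is Step 2. The whole-plane Liouville proof must be replaced by one where characteristics may be absorbed or emitted at the boundary.

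The key quantitative input is that constant traces make the boundary flux grow only linearly in $T$. The rigidity then has to come from an interior crossing count that grows faster. Making this rigorous near the boundary line, especially when $\bs\lambda_1$ is close to the boundary slope $-n_t/n_x$, is where I expect the real work to lie.
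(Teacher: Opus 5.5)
\textbf{Verdict: there is a genuine gap.} Step~2, which carries the whole theorem, does not use the constant-trace hypothesis in an essential way. The lower end also needs a separate cross-link that your plan omits.

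\textbf{What is correct.} Step~3 is correct. Your Step~1 observation is also right: $\br H[\xi]$ restricted to $\gamma_b$ is affine with slope proportional to $\bs\Phi^+(\xi)$. This gives a clean proof that when $\bs\Phi^+(\xi)=0$, almost every level curve is global and never meets the boundary, which is the content of Proposition~\ref{lemma:highcurves}.

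\textbf{The gap in Step 2.} Your comparison ``boundary flux linear in $T$ versus interior defect quadratic in $T$'' uses the trace only through the fact that the boundary flux over a segment of length $\sim T$ is $O(T)$. That holds for any bounded trace, constant or not. So if the argument worked, every isentropic solution in $\mr H^+$ would be constant, and this is false:
\begin{itemize}
\item Take $\bs u_L,\bs u_R$ on a $1$-rarefaction curve with $-n_t/n_x<\lambda_1(\bs u_L)<\lambda_1(\bs u_R)$.
\item Set $\bs u=\bs u_R$ for $t<0$. For $t>0$, let $\bs u$ be the centered $1$-rarefaction from $\bs u_L$ (adjacent to $\gamma_b$) to $\bs u_R$.
\item This function is continuous and piecewise smooth in the open half-plane, hence isentropic. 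Its $w$ is non-constant, and its traces are $\bs u_R$ and $\bs u_L$ on the two halves of $\gamma_b$.
\end{itemize}
The claimed quadratic growth of the defect is also unsupported: $\{w\ge\xi'\}$ has positive measure but need not have positive density at large scales.

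\textbf{The missing idea for the upper bound.} You should work only at levels where the trace vanishes on the whole line. Assuming $\overline w:=\esssup_{\mr H^+}w>w^\star$, the paper does the following:
\begin{itemize}
\item It takes $w^\star<\xi<\hat\xi<\overline w$ close to $\overline w$, and picks one global $\xi$-curve $\gamma$ (this is exactly where your Step~1 observation applies).
\item It balances $\bs\chi[\hat\xi]$ in the region between $\gamma_b$ and $\gamma$. The boundary term vanishes because $\bs\Phi^+(\hat\xi)=0$.
\item The speed gap $\bs\lambda_1[\hat\xi]-\bs\lambda_1[\xi]\ge c(\hat\xi-\xi)$ makes the outflow through $\gamma$ grow linearly in $t_2-t_1$, while the enclosed mass stays bounded. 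This gives $\esssup w\le w^\star$.
\end{itemize}
Together with the lower bound below, this identifies the constant directly, so your a posteriori Step~3 is not needed.

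\textbf{The gap at the lower end.} This end is not symmetric. Running the same argument with $\wt{\bs\chi}[\xi]$ near $\underline w$ only yields $\essinf w\ge\wt w_\star:=\essinf\{\xi \,|\, \wt{\bs\Phi}^+(\xi)\neq0\}$. Nothing in your plan relates $\wt w_\star$ to $w^\star$. For a solution without a strong trace one cannot a priori exclude $\wt w_\star<w^\star$, and then the two one-sided bounds do not meet.

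The paper supplies the link in Proposition~\ref{prop:lowertracepre} and Corollary~\ref{prop:lowertrace}:
\begin{itemize}
\item For $\xi<\min\{w^\star,\underline w+\bar r\}$, almost no $\wt{\bs\chi}[\xi]$-curve reaches $\gamma_b$.
\item Otherwise, take two nearby such curves that hit the boundary; one shows $w\le\xi+\varepsilon$ in the region they enclose.
\item Then $\bs\Phi^+(\hat\xi)$ vanishes on a boundary segment for all $\hat\xi>\xi+\varepsilon$. By constancy of the trace it vanishes on the whole line, contradicting $\xi<w^\star$.
\item Consequently $\wt{\bs\Phi}^+(\xi)=0$ for these $\xi$, and the lower-end balance argument gives $\essinf w\ge w^\star$.
\end{itemize}
This is a second, essential use of the constancy of the traces, and it is absent from your proposal.
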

\begin{remark}\label{rem:essinf}
  With the symmetric proof, one can show also that 
 $$
       w_c^\pm  = \essinf \, \big\{ \xi \in \mathbb R\; | \;  \wt{\bs \Phi}^\pm(\xi )  \neq 0\Big\} , \qquad  z_c^\pm  = \essinf\, \big\{ \zeta \in \mathbb R\; | \;  \wt{\bs \Psi}^\pm(\zeta )  \neq 0\Big\}.
    $$
\end{remark}

We will prove in this section that $\phi_1(\bs u) \equiv w_c^\pm$, i.e. the first Riemann invariant is a constant function, in the regions $\mr{H}^{\pm}$ respectively. In order to prove the claim, we only need the kinetic traces corresponding to $w$, namely $\boldsymbol{\Phi}^\pm$ and $\widetilde{\boldsymbol{\Phi}}^\pm$. The identities $\phi_2(\boldsymbol{u}) \equiv z_c^\pm$ in $\mr{H}^\pm$ can be established in a completely analogous way. Therefore, in the following we state the arguments only for the first Riemann invariant, with the understanding that the symmetric result holds for the second one.

Also, it is clearly the same to prove the result only in $\mr{H}^{+}$: 
$$
\mr{H}^{+} := \Big\{(t,x) \, | \, x > \gamma_b(t), \quad t \in \mathbb R \Big\}, \qquad \gamma_b(t) := -\frac{n_t}{n_x} \cdot t
$$
where here $\vec n^\perp$ plays the role of $\dot \gamma_\infty/|\dot \gamma_\infty|$, therefore from now on we will restrict to this case.

Let $\bar r > 0$ be as in Proposition \ref{prop:localspeed}, so that in particular for all $(t,x) \in \mr{H}^{+}$ there holds
\begin{equation}\label{eq:chipos}
\bs \chi[\xi](\bs u(t,x))  > 0 \quad \text{for all $\xi \in (\overline w- \bar r, \overline w)$},
\end{equation}
\begin{equation}\label{eq:chitildepos}
\wt{\bs \chi}[\xi](\bs u(t,x))  > 0 \quad \text{for all $\xi \in (\underline w, \underline w + \bar r)$},
\end{equation}
where again 
$$
\overline w = \esssup_{{\mr H}^+} w, \qquad \underline w = \essinf_{{\mr H}^+} w.
$$

\vspace{0.5cm}
We provide a sketch of the proof of Theorem \ref{thm:Liuv1} and a summary of the results of this Section. 

\vspace{0.3cm}
\textbf{1.} Proposition \ref{prop:tracesleb} and Proposition \ref{prop:balance} are technical tools needed to compute balances in regions delimited by curves with good properties. These properties are  fulfilled  by almost every curve selected by the Lagrangian representations of Section \ref{sec:hamlag}.

Let
$$
w^\star := \esssup\Big\{ \xi \; | \; \bs \Phi(\xi) \neq 0\Big\} 
$$
be the essential supremum of the support of $\bs \Phi$.

\vspace{0.3cm}
\textbf{2.} In Proposition \ref{lemma:highcurves} we show the easy fact that curves belonging to $\Gamma[\xi]$, as defined in Definition \ref{def:lagradef} in the set $\Omega \equiv \mr{H}^+$, representing $\bs \chi[\xi]$, with $\xi > w^\star$, cannot reach the boundary curve $\gamma_b$. The intuition is clear in that for the range of $\xi > w^\star$ one has $\bs \Phi(\xi) = 0$, therefore there is no boundary flux in the continuity equation \eqref{eq:halfcont}, in particular curves are not allowed to flow through the boundary curve $\gamma_b$.

\vspace{0.3cm}
\textbf{3.} It is a bit more difficult to show that also curves $\wt\gamma \in \wt \Gamma[\xi]$ as defined in Definition \ref{def:lagradefepi} in the set $\Omega \equiv \mr{H}^+$, cannot reach the boundary if $\xi < w^\sharp := \min\{w^\star, \underline w + \bar r\}$, and is the content of Proposition \ref{prop:lowertracepre}. The intuition is slightly different here and goes as follows. If a curve $\wt \gamma \in \wt \Gamma[\xi]$ with $\xi < w^\sharp$ reaches the boundary, by (3) of Definition \ref{def:lagradefepi} (very informally) one has $w < \xi$ at that point of the boundary. By definition of $\bs \chi$, one has $\bs \chi[\hat \xi](\bs u) = 0$ if $\hat \xi > w = \phi_1(\bs u)$. Therefore if a curve in $\wt \Gamma[\xi]$ reaches the boundary, at that point we would have $\bs \chi[\hat \xi](\bs u) = 0$ for all $\hat \xi > w$, so in particular for all $\hat \xi > \xi$. But this, again very informally, would imply that $\bs \Phi(\hat \xi) = 0$ for all $\hat   \xi > \xi$ (i.e. $\bs \chi[\hat \xi]$ does not have flux at that point in the boundary, being zero at that point), and therefore  $w^\star  \leq \xi$, a contradiction.

\vspace{0.3cm}
\textbf{4.} In Corollary \ref{prop:lowertrace}, we show the quite standard fact that since curves in $\wt \Gamma[\xi]$ cannot reach the boundary for $\xi \leq w^\sharp$, the corresponding weak normal trace must be zero, i.e. we show that 
$$
\wt {\bs \Phi}(\xi) = 0 \quad \text{for all $\xi \leq w^\sharp$}.
$$

\vspace{0.3cm}
\textbf{5.} Finally, we prove Theorem \ref{thm:Liuv1}. By the above steps, we have that for $\xi > w^\star$, $\bs \chi[\xi]$ is not affected by the boundary, and for $\xi \leq w^\sharp$, $\wt {\bs \chi}[\xi]$ is also not affected by the boundary. Therefore we are in a position to prove a decay-dispersive type estimate using the genuine nonlinearity of the eigenvalue, in the region ${\mr H}^+$, which allows to conclude the proof of Theorem \ref{thm:Liuv1}.

\vspace{0.3cm}
\textbf{6.} The Section is concluded with the proof of Theorem \ref{thm:main}, which at this point is a standard consequence of point \textbf{5.}

\vspace{0.5cm}
We start by proving the following technical result. If $\bs\chi[\xi](\bs u), \bs \psi[\xi](\bs u)$ were Lipschitz functions of their arguments, the next Proposition would follow immediately from Lemma \ref{lemma:traces}, but they have a jump along the curve $\{\phi_1(\bs u) = \xi\}$. Therefore a small additional technical argument is needed, using the assumption \eqref{eq:gammanodeg}.
\begin{prop}\label{prop:tracesleb}
Let $\xi \in (\underline w, \overline w)$  be fixed, and let $\gamma : (t_1, t_2) \to \mathbb R$ be a Lipschitz curve such that $(t, \gamma(t))$ is a Lebesgue point of $\bs u$ for almost every $t$, and satisfying
\begin{equation}\label{eq:gammanodeg}
 w(t, \gamma(t)) \neq \xi \qquad \text{ for a.e. $t \in (t_1, t_2)$}
\end{equation}
Then there holds
\begin{equation}\label{eq:tracesleb}
\begin{aligned}
\lim_{\delta \to 0^+} \int_{I_\gamma}  \fint_{\gamma(t)-\delta}^{\gamma(t)+\delta} & \big( \bs \chi[\xi](\bs u(t, x)), \bs \psi[\xi](\bs u(t, x)) \big) \cdot (-\dot \gamma(t), 1) \dif x \dif t \\
& = \int_{t_1}^{t_2}    \big( \bs \chi[\xi](\bs u(t, \gamma(t))), \bs \psi[\xi](\bs u(t, \gamma(t))) \big) \cdot (-\dot \gamma(t), 1)  \dif x\dif t.
\end{aligned}
\end{equation}
and 
\begin{equation}\label{eq:tracesleb1}
\begin{aligned}
\lim_{\delta \to 0^+} \int_{I_\gamma}  \fint_{\gamma(t)-\delta}^{\gamma(t)+\delta} & \big( \wt{\bs \chi}[\xi](\bs u(t, x)), \wt{\bs \psi}[\xi](\bs u(t, x)) \big) \cdot (-\dot \gamma(t), 1)  \dif x\dif t \\
& = \int_{t_1}^{t_2}    \big( \wt{\bs \chi}[\xi](\bs u(t, \gamma(t))), \wt{\bs \psi}[\xi](\bs u(t, \gamma(t))) \big) \cdot (-\dot \gamma(t), 1) \dif x \dif t.
\end{aligned}
\end{equation}
where for a set $A$ we denote the mean operator on a set $A$ as $\fint_A := |A|^{-1} \int$ .
\end{prop}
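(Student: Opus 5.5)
The plan is to reduce \eqref{eq:tracesleb} to a pointwise-in-$t$ statement and conclude by dominated convergence. Since $(\bs\chi,\bs\psi)[\xi]$ is bounded on $\mc U$ and $\dot\gamma$ is bounded, the integrands are uniformly bounded on $(t_1,t_2)$. It therefore suffices to show that for a.e. $t\in(t_1,t_2)$
\begin{equation*}
\lim_{\delta\to0^+}\fint_{\gamma(t)-\delta}^{\gamma(t)+\delta}\big|\bs\chi[\xi](\bs u(t,x))-\bs\chi[\xi](\bs u(t,\gamma(t)))\big|\dif x=0,
\end{equation*}
together with the analogous statement for $\bs\psi[\xi]$. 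The factor $(-\dot\gamma(t),1)$ does not depend on $x$, so it passes through the inner mean.

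The difficulty is that $\bs\chi[\xi]=\bs\Theta[\xi]\mathbf 1_{\{w\ge\xi\}}$ is only piecewise smooth. For this reason I would not use Lemma \ref{lemma:traces} directly on the composition. Instead, I would apply Lemma \ref{lemma:traces} to $\bs u$ itself. This gives, along a subsequence $\delta_k\to0$ and then for a.e. $t$ (by Fatou or a diagonal/Borel–Cantelli extraction from the $\mathbf L^1(\dif t)$ convergence),
\begin{equation*}
\fint_{\gamma(t)-\delta}^{\gamma(t)+\delta}|\bs u(t,x)-\bs u(t,\gamma(t))|\dif x\to0 .
\end{equation*}
A cleaner route is to argue with the $\mathbf L^1(\dif t)$ quantity directly, splitting into sets of $t$ as follows.

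Fix $\varepsilon>0$ and let $E_\varepsilon:=\{t:\ |w(t,\gamma(t))-\xi|>\varepsilon\}$. By \eqref{eq:gammanodeg}, $|(t_1,t_2)\setminus E_\varepsilon|\to0$ as $\varepsilon\to0$. The contribution of the complement is bounded by $C|(t_1,t_2)\setminus E_\varepsilon|$, uniformly in $\delta$.

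For $t\in E_\varepsilon$, the points $x$ with $|\bs u(t,x)-\bs u(t,\gamma(t))|\le c\varepsilon$ (with $c$ depending on the Lipschitz constant of $\phi_1$) satisfy $\mathbf 1_{\{w(t,x)\ge\xi\}}=\mathbf 1_{\{w(t,\gamma(t))\ge\xi\}}$. On that set the smooth Lipschitz bound for $\bs\Theta[\xi]$ gives $|\bs\chi[\xi](\bs u(t,x))-\bs\chi[\xi](\bs u(t,\gamma(t)))|\le L|\bs u(t,x)-\bs u(t,\gamma(t))|$. At the remaining points the difference is bounded by $2\sup|\bs\chi|\le C$, while $1\le |\bs u(t,x)-\bs u(t,\gamma(t))|/(c\varepsilon)$. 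Hence on $E_\varepsilon$,
\begin{equation*}
|\bs\chi[\xi](\bs u(t,x))-\bs\chi[\xi](\bs u(t,\gamma(t)))|\le \Big(L+\frac{C}{c\varepsilon}\Big)|\bs u(t,x)-\bs u(t,\gamma(t))| .
\end{equation*}
Integrating over $E_\varepsilon$ and the $x$-mean, Lemma \ref{lemma:traces} makes this contribution tend to $0$ as $\delta\to0$ for fixed $\varepsilon$. Letting $\delta\to0$ and then $\varepsilon\to0$ gives \eqref{eq:tracesleb}. The same argument for $\bs\psi[\xi]=\bs\Xi[\xi]\mathbf 1_{\{w\ge\xi\}}$, and for the tilde versions with $\mathbf 1_{\{w\le\xi\}}$, gives \eqref{eq:tracesleb1}.

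The only delicate point is the discontinuity across $\{w=\xi\}$. The hypothesis \eqref{eq:gammanodeg} and the quantitative splitting through $E_\varepsilon$ are designed exactly to handle it, by trading the jump for a factor $1/\varepsilon$ multiplying a quantity that vanishes by Lemma \ref{lemma:traces}. Everything else is routine bookkeeping with bounded integrands.
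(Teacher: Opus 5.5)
Your argument is correct and is essentially the paper's proof. The paper also splits $(t_1,t_2)$ into the sets where $w(t,\gamma(t))>\xi+\kappa^{-1}$, where $w(t,\gamma(t))<\xi-\kappa^{-1}$, and where $|w(t,\gamma(t))-\xi|\le\kappa^{-1}$. On the first two it bounds the jump of the indicator by a Chebyshev-type factor times $|\bs u(t,x)-\bs u(t,\gamma(t))|$, so that Lemma~\ref{lemma:traces} applies, and it makes the last set small via \eqref{eq:gammanodeg}, exactly as your $E_\varepsilon$ splitting does (your factor $1/(c\varepsilon)$ is the right one; the $\kappa^{-1}$ displayed in the paper should read $\kappa$).

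You should drop the opening dominated-convergence reduction to a pointwise-in-$t$ limit. Lemma~\ref{lemma:traces} gives only $\mathbf L^1(\dif t)$ convergence, hence a.e.\ convergence only along subsequences, so it does not supply that pointwise limit. You rightly abandon it in favour of the direct $\mathbf L^1(\dif t)$ estimate, which is all that is needed.
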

\begin{proof}
We prove the statement for \eqref{eq:tracesleb}, since \eqref{eq:tracesleb1} is entirely symmetric.

    \vspace{0.5cm}
It is clearly enough to prove separately 
    \begin{equation}\label{eq:jumpleb}
    \lim_{\delta \to 0^+} \int_{t_1}^{t_2}  \fint_{\gamma(t)-\delta}^{\gamma(t)+\delta} |\dot \gamma(t)| \Big| \bs \chi[\xi](\bs u(t,x)) - \bs \chi[\xi](\bs u(t, \gamma(t)))\Big| \dif x \dif t = 0
    \end{equation}
 \begin{equation}\label{eq:jumpleb1}
    \lim_{\delta \to 0^+} \int_{t_1}^{t_2}  \fint_{\gamma(t)-\delta}^{\gamma(t)+\delta}\Big| \bs \psi[\xi](\bs u(t,x)) - \bs \psi[\xi](\bs u(t, \gamma(t)))\Big| \dif x \dif t = 0.
    \end{equation}
    We prove \eqref{eq:jumpleb}, since \eqref{eq:jumpleb1} can be proved in exactly the same way. We split the integral 
\begin{align}\label{eq:intsplit}
     \int_{I_\gamma}   & \fint_{\gamma(t)-\delta}^{\gamma(t)+\delta}  \Big| \bs \chi[\xi](\bs u(t,x)) - \bs \chi[\xi](\bs u(t, \gamma(t)))\Big| \dif x \dif t  \\[6pt]\notag
     & = \int_{\left\{\substack{t \in I_\gamma \, : \\ w(t, \gamma(t)) > \xi+\kappa^{-1}}\right\}}  \fint_{\gamma(t)-\delta}^{\gamma(t)+\delta}  \Big| \bs \chi[\xi](\bs u(t,x)) - \bs \chi[\xi](\bs u(t, \gamma(t)))\Big| \dif x \dif t\\[6pt]\notag
    & \qquad+ \int_{\left\{\substack{t \in I_\gamma \, : \\ w(t, \gamma(t)) < \xi-\kappa^{-1}}\right\}}  \fint_{\gamma(t)-\delta}^{\gamma(t)+\delta}  \Big| \bs \chi[\xi](\bs u(t,x)) - \bs \chi[\xi](\bs u(t, \gamma(t)))\Big| \dif x  \dif t\\[6pt]
    & \qquad +\int_{\left\{\substack{t \in I_\gamma \, : \\ \xi-\kappa^{-1} \leq  w(t, \gamma(t)) \leq \xi+\kappa^{-1}}\right\}}  \fint_{\gamma(t)-\delta}^{\gamma(t)+\delta}  \Big| \bs \chi[\xi](\bs u(t,x)) - \bs \chi[\xi](\bs u(t, \gamma(t)))\Big| \dif x  \dif t.
\end{align}
We start by estimating the first term in the right hand side by 
   \begin{align}\label{eq:int1}
&\int_{\left\{\substack{t \in I_\gamma \,:\\ w(t,\gamma(t)) > \xi+\kappa^{-1}}\right\}}
\fint_{\gamma(t)-\delta}^{\gamma(t)+\delta}
\Bigl|\boldsymbol{\chi}[\xi](\boldsymbol{u}(t,x))
- \boldsymbol{\chi}[\xi](\boldsymbol{u}(t,\gamma(t)))\Bigr|
\,\mathrm{d}x\,\mathrm{d}t \notag \\[6pt]
&\quad= \int_{\left\{\substack{t \in I_\gamma \,:\\ w(t,\gamma(t)) > \xi+\kappa^{-1}}\right\}}
\fint_{\gamma(t)-\delta}^{\gamma(t)+\delta}
\mathbf{1}_{\{w(t,x)>\xi\}}
\Bigl|\boldsymbol{\chi}[\xi](\boldsymbol{u}(t,x))
- \boldsymbol{\chi}[\xi](\boldsymbol{u}(t,\gamma(t)))\Bigr|
\,\mathrm{d}x\,\mathrm{d}t \notag \\
&\qquad+ \int_{\left\{\substack{t \in I_\gamma \,:\\ w(t,\gamma(t)) > \xi+\kappa^{-1}}\right\}}
\fint_{\gamma(t)-\delta}^{\gamma(t)+\delta}
\mathbf{1}_{\{w(t,x)<\xi\}}
\Bigl|\boldsymbol{\chi}[\xi](\boldsymbol{u}(t,x))
- \boldsymbol{\chi}[\xi](\boldsymbol{u}(t,\gamma(t)))\Bigr|
\,\mathrm{d}x\,\mathrm{d}t \notag \\[6pt]
&\quad\leq \int_{\left\{\substack{t \in I_\gamma \,:\\ w(t,\gamma(t)) > \xi+\kappa^{-1}}\right\}}
\fint_{\gamma(t)-\delta}^{\gamma(t)+\delta}
\mathrm{Lip}(\boldsymbol{\Theta}[\xi])
\,\bigl|\boldsymbol{u}(t,x) - \boldsymbol{u}(t,\gamma(t))\bigr|
\,\mathrm{d}x\,\mathrm{d}t \notag \\
&\qquad+ 2\sup|\boldsymbol{\chi}[\xi]|
\int_{\left\{\substack{t \in I_\gamma \,:\\ w(t,\gamma(t)) > \xi+\kappa^{-1}}\right\}}
\fint_{\gamma(t)-\delta}^{\gamma(t)+\delta}
\mathbf{1}_{\{w(t,x)<\xi\}}
\,\mathrm{d}x\,\mathrm{d}t \notag \\[6pt]
&\quad\leq \mathrm{Lip}(\boldsymbol{\Theta}[\xi])
\int_{\left\{\substack{t \in I_\gamma \,:\\ w(t,\gamma(t)) > \xi+\kappa^{-1}}\right\}}
\fint_{\gamma(t)-\delta}^{\gamma(t)+\delta}
\bigl|\boldsymbol{u}(t,x) - \boldsymbol{u}(t,\gamma(t))\bigr|
\,\mathrm{d}x\,\mathrm{d}t \notag \\
&\qquad+ 2\kappa^{-1}\sup|\boldsymbol{\chi}[\xi]|
\int_{\left\{\substack{t \in I_\gamma \,:\\ w(t,\gamma(t)) > \xi+\kappa^{-1}}\right\}}
\fint_{\gamma(t)-\delta}^{\gamma(t)+\delta}
\mathbf{1}_{\{w(t,x)<\xi\}}
\,\bigl|w(t,x)-w(t,\gamma(t))\bigr|
\,\mathrm{d}x\,\mathrm{d}t \notag \\[6pt]
&\quad\leq
\Bigl(
    \mathrm{Lip}(\boldsymbol{\Theta}[\xi])
    + 2\kappa^{-1}\sup|\boldsymbol{\chi}[\xi]|\,\mathrm{Lip}(\phi_1)
\Bigr)
\int_{I_\gamma}
\fint_{\gamma(t)-\delta}^{\gamma(t)+\delta}
\bigl|\boldsymbol{u}(t,x) - \boldsymbol{u}(t,\gamma(t))\bigr|
\,\mathrm{d}x\,\mathrm{d}t
\end{align}
For the second integral in \eqref{eq:intsplit} we estimate instead
    \begin{align}\label{eq:int2}
        & \int_{\left\{\substack{t \in I_\gamma \, : \\ w(t, \gamma(t)) < \xi-\kappa^{-1}}\right\}}   \fint_{\gamma(t)-\delta}^{\gamma(t)+\delta}  \Big| \bs \chi[\xi](\bs u(t,x)) - \bs \chi[\xi](\bs u(t, \gamma(t)))\Big| \dif x  \dif t \notag \\[6pt]
        & \quad = \int_{\left\{\substack{t \in I_\gamma \, : \\ w(t, \gamma(t)) < \xi-\kappa^{-1}}\right\}}   \fint_{\gamma(t)-\delta}^{\gamma(t)+\delta}  \mathbf 1_{\{ w(t,x) > \xi \}}(t,x) \Big| \bs \chi[\xi](\bs u(t,x)) - \bs \chi[\xi](\bs u(t, \gamma(t)))\Big| \dif x \dif t \notag \\[6pt]
        & \qquad+ \int_{\left\{\substack{t \in I_\gamma \, : \\ w(t, \gamma(t)) < \xi-\kappa^{-1}}\right\}}   \fint_{\gamma(t)-\delta}^{\gamma(t)+\delta}  \mathbf 1_{\{w(t,x) < \xi \}}(t,x) \Big| \bs \chi[\xi](\bs u(t,x)) - \bs \chi[\xi](\bs u(t, \gamma(t)))\Big| \dif x \dif t \notag \\[6pt]
        & \quad\leq 2\sup |\bs \chi[\xi] \int_{\left\{\substack{t \in I_\gamma \, : \\ w(t, \gamma(t)) < \xi-\kappa^{-1}}\right\}}   \fint_{\gamma(t)-\delta}^{\gamma(t)+\delta}  \mathbf 1_{\{ w(t,x) > \xi \}}(t,x)  \dif x \dif t \notag\\[6pt]
        & \quad \leq 2\sup |\bs \chi[\xi] \int_{\left\{\substack{t \in I_\gamma \, : \\ w(t, \gamma(t)) < \xi-\kappa^{-1}}\right\}}   \fint_{\gamma(t)-\delta}^{\gamma(t)+\delta}  \kappa^{-1} \, \mathbf 1_{\{ w(t,x) > \xi \}}(t,x) |w(t,x) - w(t, \gamma(t))|  \dif x \dif t  \notag \\[6pt]
        & \quad \leq 2 \kappa^{-1} \sup |\bs \chi[\xi]| \, \mr{Lip}(\bs \Theta[\xi](\cdot)) \, \mr{Lip} (\phi_1)  \int_{I_\gamma}   \fint_{\gamma(t)-\delta}^{\gamma(t)+\delta}  |\bs u(t,x) - \bs u(t, \gamma(t))|  \dif x \dif t 
    \end{align}
    Notice, for later, that the right hand sides of \eqref{eq:int1}, \eqref{eq:int2} approach zero as $\delta \to 0^+$ for every fixed $\kappa > 0$, due to Lemma \ref{lemma:traces}.
For the third integral in \eqref{eq:intsplit} we just estimate
\begin{equation}\label{eq:int3}
\begin{aligned}
&\int_{\left\{\substack{t \in I_\gamma \,:\\ \xi-\kappa^{-1} \leq w(t,\gamma(t)) \leq \xi+\kappa^{-1}}\right\}}
\fint_{\gamma(t)-\delta}^{\gamma(t)+\delta}
\Bigl|\boldsymbol{\chi}[\xi](\boldsymbol{u}(t,x))
- \boldsymbol{\chi}[\xi](\boldsymbol{u}(t,\gamma(t)))\Bigr|
\,\mathrm{d}x\,\mathrm{d}t \\[6pt]
&\qquad\leq
2\sup|\boldsymbol{\chi}[\xi]|\;
\mathscr{L}^1\!\Bigl(
    \bigl\{t \in I_\gamma \mid \xi-\kappa^{-1} \leq w(t,\gamma(t)) \leq \xi+\kappa^{-1}\bigr\}
\Bigr).
\end{aligned}
\end{equation}
By \eqref{eq:gammanodeg} therefore for every $\varepsilon > 0$ we can choose $\kappa >0$ big enough so that 
\begin{equation}\label{eq:int31}
     \int_{\left\{\substack{t \in I_\gamma \, : \\ \xi-\kappa^{-1} \leq w(t, \gamma(t)) \leq \xi+\kappa^{-1}}\right\}}   \fint_{\gamma(t)-\delta}^{\gamma(t)+\delta}  \Big| \bs \chi[\xi](\bs u(t,x)) - \bs \chi[\xi](\bs u(t, \gamma(t)))\Big| \dif x  \dif t \leq \varepsilon.
\end{equation}
Therefore combining \eqref{eq:int1}, \eqref{eq:int2}, \eqref{eq:int31}, by \eqref{eq:intsplit} we deduce, taking the limit as $\delta \to 0^+$,
\begin{equation}
    \begin{aligned}
        \lim_{\delta \to 0^+} \int_{I_\gamma}  \fint_{\gamma(t)-\delta}^{\gamma(t)+\delta} & \Big| \bs \chi[\xi](\bs u(t,x)) - \bs \chi[\xi](\bs u(t, \gamma(t)))\Big| \dif x \dif t  \leq 2 \sup |\bs \chi[\xi]| \varepsilon.
    \end{aligned}
\end{equation}
Since $\varepsilon > 0$ was arbitrary, the result is proved.
\end{proof}

As it can be expected, Proposition \ref{prop:tracesleb} is useful in that it can be used to compute explicitly the balance in some regions delimited by curves belonging to $\Gamma[\xi]$ or $\wt \Gamma[\xi]$ in terms of the Lebesgue values of $\bs u$ on the curves.

\begin{prop}\label{prop:balance}
   Let  $\gamma^\ell, \gamma^r$ be two curves such that for a.e. $t \in I_{\gamma^\ell}, I_{\gamma^r}$ $(t,\gamma^\ell(t))$ and $(t, \gamma^r(t))$, respectively, are Lebesgue points of $\bs u$, and that they both satisfy \eqref{eq:gammanodeg}.

    \vspace{0.3cm}
    \textbf{1.} Assume that for some $t^r > t^\ell \geq \bar t$ there holds $[\bar t, t^\ell) \subset I_{\gamma^\ell}$ and $[\bar t, t^r) \subset I_{\gamma^r}$ and that 
    \begin{equation}\label{eq:curv1}
    \gamma^\ell : [\bar t, t^\ell] \to \mathbb R, \quad \gamma^r : [\bar t, t^r] \to \mathbb R
    \end{equation}
such that 
\begin{equation}\label{eq:curv2}
\gamma_b(t) < \gamma^\ell(t) < \gamma^r(t) \quad \forall \; t \in (\bar t, t^\ell), \qquad \gamma^\ell(t^\ell) = \gamma_b(t^\ell)
    \end{equation}
$$
\gamma^r(t) > \gamma_b(t) \quad \forall \; t \in (\bar t, t^r), \qquad \gamma^r(t^r) = \gamma_b(t^r)
$$
where $\gamma_b$ is as in Definition \eqref{eq:equa}
Then there holds the balances
 \begin{equation}\label{eq:balanceprop}
          \begin{aligned}
              \int_{t^\ell}^{t^r} &    \bs \Phi^+(\xi) \sqrt{1+ \dot \gamma_b^2}\dif  t   - \int_{\gamma^\ell(\bar t)}^{\gamma^r(\bar t)} \bs \chi[\xi](\bs u(\bar t,x)) \dif x \\
              &  = \int_{\bar t}^{t^\ell}  (\bs \chi[\xi](\bs u(t, \gamma^\ell(t))), \bs \psi[\xi](\bs u(t, \gamma^\ell(t)))) \cdot (-\dot \gamma^\ell(t), 1)  \dif t\\
   & \qquad -\int_{\bar t}^{t^r}  (\bs \chi[\xi](\bs u(t, \gamma^r(t))), \bs \psi[\xi](\bs u(t, \gamma^r(t)))) \cdot (-\dot \gamma^r(t), 1)  \dif t 
          \end{aligned}
      \end{equation}
      and
       \begin{equation}\label{eq:balanceprop1}
          \begin{aligned}
              \int_{t^\ell}^{t^r} &    \wt{\bs \Phi}^+(\xi) \sqrt{1+ \dot \gamma_b^2}\dif  t   - \int_{\gamma^\ell(\bar t)}^{\gamma^r(\bar t)} \wt{\bs \chi}[\xi](\bs u(\bar t,x)) \dif x \\
              &  = \int_{\bar t}^{t^\ell}  (\wt{\bs \chi}[\xi](\bs u(t, \gamma^\ell(t))), \wt{\bs \psi}[\xi](\bs u(t, \gamma^\ell(t)))) \cdot (-\dot \gamma^\ell(t), 1)  \dif t\\
   & \qquad -\int_{\bar t}^{t^r}  (\wt{\bs \chi}[\xi](\bs u(t, \gamma^r(t))), \wt{\bs \psi}[\xi](\bs u(t, \gamma^r(t)))) \cdot (-\dot \gamma^r(t), 1)  \dif t 
          \end{aligned}
      \end{equation}

      \vspace{0.3cm}
    \textbf{2.} Assume that for some $t^f > \bar t$ there holds $[\bar t, t^f) \subset I_{\gamma^\ell} \cap I_{\gamma^r}$ and that $\gamma^\ell(t) < \gamma^r(t)$ for all $t \in [\bar t, t^f)$. Then there holds the balance
    \begin{equation}\label{eq:balancepropvar}
          \begin{aligned}
               \int_{\gamma^r( t^f)}^{\gamma^\ell(t^f)} & \bs \chi[\xi](\bs u(t^f,x)) \dif x - \int_{\gamma^\ell(\bar t)}^{\gamma^r(\bar t)} \bs \chi[\xi](\bs u(\bar t,x)) \dif x \\
              &  = \int_{\bar t}^{t^f}  (\bs \chi[\xi](\bs u(t, \gamma^\ell(t))), \bs \psi[\xi](\bs u(t, \gamma^\ell(t)))) \cdot (-\dot \gamma^\ell(t), 1)  \dif t\\
   & \qquad -\int_{\bar t}^{t^f}  (\bs \chi[\xi](\bs u(t, \gamma^r(t))), \bs \psi[\xi](\bs u(t, \gamma^r(t)))) \cdot (-\dot \gamma^r(t), 1)  \dif t 
          \end{aligned}
      \end{equation}
      and
       \begin{equation}\label{eq:balanceprop1var}
          \begin{aligned}
               \int_{\gamma^\ell( t^f)}^{\gamma^r(t^f)} & \wt{\bs \chi}[\xi](\bs u(t^f,x)) \dif x - \int_{\gamma^\ell(\bar t)}^{\gamma^r(\bar t)} \wt{\bs \chi}[\xi](\bs u(\bar t,x)) \dif x \\
              &  = \int_{\bar t}^{t^f}  (\wt{\bs \chi}[\xi](\bs u(t, \gamma^\ell(t))), \wt{\bs \psi}[\xi](\bs u(t, \gamma^\ell(t)))) \cdot (-\dot \gamma^\ell(t), 1)  \dif t\\
   & \qquad -\int_{\bar t}^{t^f}  (\wt{\bs \chi}[\xi](\bs u(t, \gamma^r(t))), \wt{\bs \psi}[\xi](\bs u(t, \gamma^r(t)))) \cdot (-\dot \gamma^r(t), 1)  \dif t 
          \end{aligned}
      \end{equation}

\end{prop}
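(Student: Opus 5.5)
We test the kinetic balance with cutoffs adapted to the region bounded by the two curves. In case \textbf{1.} this region is the curvilinear domain $D\subset \mr H^+$ with boundary made of the segment $\{\bar t\}\times(\gamma^\ell(\bar t),\gamma^r(\bar t))$, the graphs of $\gamma^\ell$ on $[\bar t,t^\ell]$ and of $\gamma^r$ on $[\bar t,t^r]$, and the portion of $\mathrm{Graph}(\gamma_b)$ over $[t^\ell,t^r]$. In case \textbf{2.} it is bounded by the two curves and the time slices $\bar t, t^f$.

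The starting point is Corollary \ref{coro:isehalf}. For a.e. $\xi$, and hence for the fixed $\xi$ under consideration after discarding a null set, \eqref{eq:halfcont} holds for all $\varphi\in C^1_c$, and by density for Lipschitz compactly supported $\varphi$. Away from $\mathrm{Graph}(\gamma_b)$, the isentropy together with Proposition \ref{prop:kiniso} gives $\partial_t\bs\chi[\xi](\bs u)+\partial_x\bs\psi[\xi](\bs u)=0$ in $\mr H^+$.

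We take the Lipschitz test function $\varphi_{\delta,\varepsilon}(t,x)=a_\varepsilon(t)\,\ell_\delta(x-\gamma^\ell(t))\,(1-\ell_\delta(x-\gamma^r(t)))$. Here $\ell_\delta$ is the piecewise linear ramp from $0$ on $(-\infty,-\delta]$ to $1$ on $[\delta,\infty)$, and $a_\varepsilon$ is a linear ramp in time that switches on near $\bar t$ (and, in case \textbf{2.}, off near $t^f$). In case \textbf{1.} we extend $\gamma^\ell,\gamma^r$ by $\gamma_b$ beyond $t^\ell$, $t^r$, so that the support of the test function meets $\mathrm{Graph}(\gamma_b)$ exactly on $[t^\ell,t^r]$. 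If the test function must vanish before the boundary in time, we add a late cutoff and let it go to infinity; this is harmless since the region closes up at $t^r$.

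Plugging into \eqref{eq:halfcont} produces three contributions. The first is $\tfrac{1}{2\delta}$-averages of $(\bs\chi,\bs\psi)\cdot(-\dot\gamma^{\ell,r},1)$ in strips around $\gamma^\ell$ and $\gamma^r$. The second is time-slice averages $\tfrac1\varepsilon\int_{\bar t}^{\bar t+\varepsilon}\int\bs\chi[\xi](\bs u)\,dx\,dt$. The third is the boundary term $\int_{t^\ell}^{t^r}\bs\Phi^+(\xi)\sqrt{1+\dot\gamma_b^2}\,dt$, with the measure $\mathscr H^1\llcorner\mathrm{Graph}(\gamma_b)$ written in the $t$ parametrization.

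The limit $\delta\to0^+$ of the strip terms is exactly Proposition \ref{prop:tracesleb}. Its hypotheses are the Lebesgue point property and \eqref{eq:gammanodeg}, both assumed, and it yields the curve integrals on the right of \eqref{eq:balanceprop}. For the time-slice terms we choose $\bar t$ (and $t^f$) to be Lebesgue points of $t\mapsto\int\bs\chi[\xi](\bs u(t,x))\mathbf 1_{(\gamma^\ell(t),\gamma^r(t))}dx$. Alternatively we use the representative fixed in Definition \ref{def:lagradef}(2), under which this map is weakly continuous in time by the divergence-free structure. With either choice, $\varepsilon\to0$ gives the slice integrals. The same computation with $\wt{\bs\chi},\wt{\bs\psi},\wt{\bs\Phi}^+$ gives \eqref{eq:balanceprop1} and \eqref{eq:balanceprop1var}. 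In case \textbf{2.} the boundary term is absent because the support stays inside $\mr H^+$.

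The main obstacle is the corners at $(t^\ell,\gamma_b(t^\ell))$ and $(t^r,\gamma_b(t^r))$, where a curve touches $\gamma_b$. Near them the $\delta$-strip around $\gamma^\ell$ leaves $\mr H^+$, and it overlaps the region where the $\gamma_b$-trace acts. We handle this by a domination argument: all integrands are bounded, and the overlap set has $t$-measure $O(\delta)$ times the Lipschitz constants. Its contribution is therefore $O(\delta)$, so it vanishes in the limit. We also need that on $[t^\ell,t^r]$ the support of the test function meets $\gamma_b$ in a set whose measure converges to $t^r-t^\ell$, which holds because $\gamma^\ell<\gamma^r$ on $(\bar t,t^\ell)$ and the extended curves coincide with $\gamma_b$ on the remaining interval.
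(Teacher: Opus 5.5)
Your strategy is the paper's. You test \eqref{eq:halfcont} with a Lipschitz approximation of the indicator of the region. The curve integrals come from Proposition \ref{prop:tracesleb}, and the slice terms from the weak$^\ast$ continuity in time of $\bs\chi[\xi](\bs u(t,\cdot))$. Case \textbf{2.} works as you describe it, provided you let $\delta\to0$ before $\varepsilon\to0$ so that the support stays in $\mr H^+$.

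Case \textbf{1.}, however, is not handled correctly along $\mathrm{Graph}(\gamma_b)$. Your ramp is symmetric, so $\ell_\delta(0)=\tfrac12$, and you extend both curves by $\gamma_b$. This gives $\varphi_{\delta,\varepsilon}(t,\gamma_b(t))=\tfrac12 a_\varepsilon(t)$ on $(t^\ell,t^r)$ wherever $\gamma^r-\gamma_b\geq\delta$, and $\varphi_{\delta,\varepsilon}(t,\gamma_b(t))=\tfrac14 a_\varepsilon(t)$ for all $t>t^r$ up to your late cutoff. Several consequences follow:
\begin{itemize}
\item The support does not meet $\mathrm{Graph}(\gamma_b)$ only over $[t^\ell,t^r]$.
\item The test function does not close up at $t^r$: a tent of height $\tfrac14$ survives along $\gamma_b$.
\item The right-hand side of \eqref{eq:halfcont} is not $\int_{t^\ell}^{t^r}\bs\Phi^+(\xi)\sqrt{1+\dot\gamma_b^2}\,\dif t$.
\item For every $t>t^\ell$, half of the ramp lies inside $\mr H^+$, in $\{\gamma_b(t)<x<\gamma_b(t)+\delta\}$. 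It produces terms of order one, such as $\tfrac{1}{2\delta}\int_{t^\ell}^{t^r}\int_{\gamma_b(t)}^{\gamma_b(t)+\delta}(\bs\chi[\xi](\bs u),\bs\psi[\xi](\bs u))\cdot(-\dot\gamma_b,1)\,\dif x\,\dif t$.
\end{itemize}
Proposition \ref{prop:tracesleb} does not cover these terms, because $\bs u$ lives on one side of $\gamma_b$ only and has no Lebesgue points on it. Nor are they corner errors, since they live on time intervals of fixed length.

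The identity you write down is nevertheless the right one, but only because the omitted pieces cancel. Use the analogue of Proposition \ref{prop:normallimit} for \eqref{eq:halfcont}, namely $\fint_{\gamma_b}^{\gamma_b+h}(\bs\chi[\xi](\bs u),\bs\psi[\xi](\bs u))\cdot(-\dot\gamma_b,1)\,\dif x\rightharpoonup-\bs\Phi^+(\xi)\sqrt{1+\dot\gamma_b^2}$. Then the half-strip on $(t^\ell,t^r)$ tends to $-\tfrac12\int_{t^\ell}^{t^r}\bs\Phi^+\sqrt{1+\dot\gamma_b^2}\,\dif t$, which supplies the missing half of the boundary term. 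On $(t^r,\infty)$ the weight $\partial_x\bigl(\ell_\delta(1-\ell_\delta)\bigr)$ has mass $-\tfrac14$, and the resulting term cancels the $\tfrac14$ on the right-hand side.

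So you must either do this weak-trace bookkeeping explicitly, or do what the paper intends:
\begin{itemize}
\item Put the ramp along the left boundary $\bar\gamma^\ell$ (equal to $\gamma^\ell$ on $(\bar t,t^\ell)$ and to $\gamma_b$ on $(t^\ell,t^r)$) on the outer side. Then $\varphi\equiv1$ in $\mr H^+$ next to the $\gamma_b$ portion and $\varphi=1$ on $\mathrm{Graph}(\gamma_b)$ over $(t^\ell,t^r)$, so that portion enters only through the right-hand side of \eqref{eq:halfcont}. Proposition \ref{prop:tracesleb} holds for one-sided averages with the same proof.
\item Switch $\varphi$ off at $t^r$ instead of extending the curves. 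This costs only $O(\delta)$, since near $t^r$ the support is a layer of width about $\delta$ next to $\gamma_b(t^r)$.
\end{itemize}

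Two smaller points. First, the corner sets $\{t<t^\ell:\ \gamma^\ell(t)-\gamma_b(t)<\delta\}$ and $\{t<t^r:\ \gamma^r(t)-\gamma_b(t)<2\delta\}$ need not have measure $O(\delta)$; tangential contact gives $\sqrt\delta$. Their measure is only $o(1)$, by continuity of measure, which suffices because the integrands are uniformly bounded. Second, of your two options for the slice terms, only the weak$^\ast$-continuity one proves the statement for the given $\bar t$ and $t^f$.
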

\begin{remark}
    Notice in particular that, in point \textbf{1.}, by taking $t^\ell = \bar t$ one obtains the balance in the region between the curve $\gamma^r$ and the boundary curve $\gamma_b$.
\end{remark}
\begin{proof}
We prove \textbf{1.}, \eqref{eq:balanceprop}, the proof of \textbf{2.} being entirely similar. 
We start from the equality of Corollary \ref{coro:isehalf}, which yields for all Lipschitz $\varphi : \mathbb R^2 \to \mathbb R$:
  \begin{equation}\label{eq:normaltphi11}
   \begin{aligned}
   \iint_{{\mr H}^+}  (\bs \chi[\xi](\bs u), \bs \psi[\xi](\bs u)) \cdot \nabla \varphi \dif x \dif t  = \int_{\mathrm{Graph}\, (\gamma_b)} \varphi(t,x)   \bs \Phi(\xi) \dif \mathscr H^1(t,x)
   \end{aligned}
      \end{equation}
     Notice that by choosing a time cutoff 
      $$
      h^\varepsilon(t) : = \begin{cases}
          0 & \text{if $t \leq \bar t- \varepsilon$}\\
          \frac{1}{\varepsilon}(t-\bar t+\varepsilon) & \text{if $\bar t- \varepsilon \leq t \leq \bar t$}\\
          1 & \text{if $t > \bar t$}
      \end{cases}
      $$
      and using $\varphi h^\varepsilon$ as a test function in   \eqref{eq:normaltphi11}, we have that 
\begin{equation}\label{eq:normaltphi1}
\begin{aligned}
&\int_{\mathrm{Graph}(\gamma_b)}
\mathbf{1}_{t \geq \bar{t}}(t,x)\,\varphi(t,x)\,
\boldsymbol{\Phi}^+(\xi)
\,\mathrm{d}\mathscr{H}^1(t,x) \\[6pt]
&\lim_{\varepsilon \to 0^+}\int_{\mathrm{Graph}(\gamma_b)}
\mathbf{1}_{t \geq \bar{t}}(t,x)\,\varphi^\varepsilon(t,x)\,
\boldsymbol{\Phi}^+(\xi)
\,\mathrm{d}\mathscr{H}^1(t,x) \\[6pt]
&\quad= \lim_{\varepsilon \to 0^+}
\iint_{{\mr H}^+}
\bigl(\boldsymbol{\chi}[\xi](\boldsymbol{u}),\,\boldsymbol{\psi}[\xi](\boldsymbol{u})\bigr)
\cdot \nabla(h^\varepsilon \varphi)
\,\mathrm{d}x\,\mathrm{d}t \\[6pt]
&\quad=
\iint_{{\mr H}^+}
\mathbf{1}_{t \geq \bar{t}}(t)\,
\bigl(\boldsymbol{\chi}[\xi](\boldsymbol{u}),\,\boldsymbol{\psi}[\xi](\boldsymbol{u})\bigr)
\cdot \nabla\varphi
\,\mathrm{d}x\,\mathrm{d}t \\
&\qquad+
\lim_{\varepsilon \to 0^+} \frac{1}{\varepsilon}
\iint_{{\mr H}^+}
\mathbf{1}_{\bar{t}-\varepsilon \leq t \leq \bar{t}}\,
\boldsymbol{\chi}[\xi](\boldsymbol{u})\,\varphi
\,\mathrm{d}x\,\mathrm{d}t \\[6pt]
&\quad=
\iint_{{\mr H}^+}
\mathbf{1}_{t \geq \bar{t}}(t)\,
\bigl(\boldsymbol{\chi}[\xi](\boldsymbol{u}),\,\boldsymbol{\psi}[\xi](\boldsymbol{u})\bigr)
\cdot \nabla\varphi
\,\mathrm{d}x\,\mathrm{d}t \\
&\qquad+
\int_{\mathbb{R}}
\boldsymbol{\chi}[\xi](\boldsymbol{u}(\bar{t},x))\,\varphi(\bar{t},x)
\,\mathrm{d}x.
\end{aligned}
\end{equation}
where the last equality follows from the fact that $t \mapsto \bs \chi[\xi](\bs u(t, \cdot))$ is weakly$^\ast$ continuous from $\mathbb R^+$ to $\mathbf L^1_{loc}(\mathbb R)$, since it satisfies the equation in divergence form \eqref{eq:kin22}.

      We next choose $\varphi$ as an approximation of the characteristic function of the region 
      $$
      \begin{aligned}
      \Delta & := \Big\{(s,x) \in {\mr H}^+ \; | \; \gamma^\ell(s) \leq x \leq \gamma^r(s), \quad  s \in (t, t^\ell) \Big\} \\
      & \qquad\cup \Big\{(s, x) \in {\mr H}^+   \; | \; \gamma_b(s) \leq x \leq \gamma^r(s), s \in (t^\ell, t^r) \Big\}  
            \end{aligned}
      $$
      in the following way.
Let 
$$
\bar \gamma^\ell(t) := \begin{cases}
    \gamma^\ell(t) & \text{if $t \in (\bar t, t^\ell)$}\\
    \gamma_b(t) & \text{if $t \in (t^\ell, t^r)$}
\end{cases}
$$
be the left boundary of $\Delta$. We define for $t \in (\bar t, t^r)$:
$$
\varphi^\varepsilon(t,x) := \begin{cases}
    0 & \text{if $ x \leq \bar \gamma^\ell(t)$}\\
    \frac{1}{\varepsilon}(x-\bar \gamma^\ell(t)) & \text{if $ x \in (\bar \gamma^\ell(t), \bar \gamma^\ell(t) +\varepsilon)$} \\
    1 & \text{if $x \in (\bar \gamma^\ell(t), \gamma^r(t))$},\\
    1- \frac{1}{\varepsilon}(x-\gamma^r(t)+\varepsilon) & \text{if $x \in (\gamma^r(t)-\varepsilon, \gamma^r(t))$},\\
    0 & \text{if $x > \gamma^r(t)$}.
\end{cases}
$$
and we extended it to zero for $t \geq t^r$, while we extended it in an arbitrary Lipschitz way for $t \leq \bar t$ (say, compactly supported). 
Then we have, using \eqref{eq:tracesleb},
\begin{equation}\label{eq:normaltphi2}
\begin{aligned}
&\lim_{\varepsilon \to 0^+}
\iint_{{\mr H}^+}
\mathbf{1}_{t > \bar{t}}(t)\,
\bigl(\boldsymbol{\chi}[\xi](\boldsymbol{u}),\,\boldsymbol{\psi}[\xi](\boldsymbol{u})\bigr)
\cdot \nabla\varphi^\varepsilon
\,\mathrm{d}x\,\mathrm{d}t \\[6pt]
&\quad= \lim_{\varepsilon \to 0^+}
\Biggl(
    \frac{1}{\varepsilon}
    \int_{\bar{t}}^{t^\ell}\int_{\gamma^\ell(t)-\varepsilon}^{\gamma^\ell(t)}
    \bigl(\boldsymbol{\chi}[\xi](\boldsymbol{u}),\,\boldsymbol{\psi}[\xi](\boldsymbol{u})\bigr)
    \cdot \bigl(-\dot{\gamma}^\ell(t),\,1\bigr)
    \,\mathrm{d}x\,\mathrm{d}t \\
&\qquad\quad-
    \frac{1}{\varepsilon}
    \int_{\bar{t}}^{t^r}\int_{\gamma^r(t)}^{\gamma^r(t)+\varepsilon}
    \bigl(\boldsymbol{\chi}[\xi](\boldsymbol{u}),\,\boldsymbol{\psi}[\xi](\boldsymbol{u})\bigr)
    \cdot \bigl(-\dot{\gamma}^r(t),\,1\bigr)
    \,\mathrm{d}x\,\mathrm{d}t
\Biggr) \\[6pt]
&\quad=
\int_{\bar{t}}^{t^\ell}
\bigl(\boldsymbol{\chi}[\xi](\boldsymbol{u}(t,\gamma^\ell(t))),\,
      \boldsymbol{\psi}[\xi](\boldsymbol{u}(t,\gamma^\ell(t)))\bigr)
\cdot \bigl(-\dot{\gamma}^\ell(t),\,1\bigr)
\,\mathrm{d}t \\
&\qquad-
\int_{\bar{t}}^{t^r}
\bigl(\boldsymbol{\chi}[\xi](\boldsymbol{u}(t,\gamma^r(t))),\,
      \boldsymbol{\psi}[\xi](\boldsymbol{u}(t,\gamma^r(t)))\bigr)
\cdot \bigl(-\dot{\gamma}^r(t),\,1\bigr)
\,\mathrm{d}t.
\end{aligned}
\end{equation}
as $\varepsilon \to 0^+$, where we used Proposition \ref{prop:tracesleb} to deduce the last equality. Then we conclude using \eqref{eq:normaltphi1}, \eqref{eq:normaltphi2} that
\begin{equation}
\begin{aligned}
&\int_{t^\ell}^{t^r}
\boldsymbol{\Phi}^+(\xi)\,\sqrt{1+\dot{\gamma}_b^2}
\,\mathrm{d}t \\[6pt]
&\quad=
\int_{\mathrm{Graph}(\gamma_b)}
\mathbf{1}_{t \in  (t^\ell, t^r)}(t,x)\,\boldsymbol{\Phi}^+(\xi)
\,\mathrm{d}\mathscr{H}^1(t,x) \\[6pt]
&\quad=
\lim_{\varepsilon \to 0} \int_{\mathrm{Graph}(\gamma_b)}
\varphi^\varepsilon(t,x)\,\boldsymbol{\Phi}^+(\xi)
\,\mathrm{d}\mathscr{H}^1(t,x) \\[6pt]
&\quad= \lim_{\varepsilon \to 0^+}
\Biggl(
    \iint_{{\mr H}^+}
    \mathbf{1}_{t \geq \bar{t}}(t)\,
    \bigl(\boldsymbol{\chi}[\xi](\boldsymbol{u}),\,\boldsymbol{\psi}[\xi](\boldsymbol{u})\bigr)
    \cdot \nabla\varphi^\varepsilon
    \,\mathrm{d}x\,\mathrm{d}t
    +
    \int_{\mathbb{R}}
    \boldsymbol{\chi}[\xi](\boldsymbol{u}(\bar{t},x))\,\varphi^\varepsilon(\bar{t},x)
    \,\mathrm{d}x
\Biggr) \\[6pt]
&\quad=
\int_{\bar{t}}^{t^\ell}
\bigl(\boldsymbol{\chi}[\xi](\boldsymbol{u}(t,\gamma^\ell(t))),\,
      \boldsymbol{\psi}[\xi](\boldsymbol{u}(t,\gamma^\ell(t)))\bigr)
\cdot\bigl(-\dot{\gamma}^\ell(t),\,1\bigr)
\,\mathrm{d}t \\
&\qquad-
\int_{\bar{t}}^{t^r}
\bigl(\boldsymbol{\chi}[\xi](\boldsymbol{u}(t,\gamma^r(t))),\,
      \boldsymbol{\psi}[\xi](\boldsymbol{u}(t,\gamma^r(t)))\bigr)
\cdot\bigl(-\dot{\gamma}^r(t),\,1\bigr)
\,\mathrm{d}t \\
&\qquad+
\int_{\gamma^\ell(\bar t)}^{\gamma^r(\bar t)}
\boldsymbol{\chi}[\xi](\boldsymbol{u}(\bar{t},x))
\,\mathrm{d}x.
\end{aligned}
\end{equation}

\end{proof}

\begin{prop}\label{lemma:highcurves}
    Let $\xi > \max\{ w^\star, \overline w- \bar r\}$ and $\xi < \overline w$, satisfy 
    \begin{equation}\label{eq:xinondeg}
        \mathscr L^2\Big(\big\{ (t,x) \;  |\; w(t,x) = \xi \big\}\Big) = 0
    \end{equation}
    where we recall
    $$
w^\star := \esssup\Big\{ \xi \; | \; \bs \Phi^+(\xi) \neq 0\Big\}.
$$
    Then  $\bs \omega_\xi$-a.e. $\gamma \in \Gamma[\xi]$  satisfies $I_\gamma = \mathbb R$ and 
    $$
    \gamma(t) > \gamma_b(t) \qquad \forall \; t \in \mathbb R.
    $$
\end{prop}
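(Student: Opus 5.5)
The plan is to argue by contradiction with the balance of Proposition \ref{prop:balance}, point \textbf{1.}, applied to level-set curves of $\br H[\xi]$. Two facts will be used throughout. First, since $\xi > w^\star$ we have $\bs \Phi^+(\xi) = 0$, so no flux enters through the boundary. Second, along any $\gamma\in\Gamma[\xi]$ the integrand $(\bs\chi[\xi],\bs\psi[\xi])\cdot(-\dot\gamma,1)$ vanishes for a.e.\ $t$, by the characteristic property \eqref{eq:charomega}. Condition \eqref{eq:xinondeg}, combined with the Tonelli argument of Step 2 in the proof of Proposition \ref{prop:lagraprop} (with $S=\{w=\xi\}$), shows that $\bs\omega_\xi$-a.e.\ curve satisfies \eqref{eq:gammanodeg}. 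Those curves also go through Lebesgue points a.e., so the balance of Proposition \ref{prop:balance} is available for them.

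By Remark \ref{rem:levellagr}, each maximal component $(a_i,b_i)$ of $I_\gamma$ ends on $\partial\mr H^+=\mathrm{Graph}(\gamma_b)$, or extends to infinity. So it suffices to show that the set of curves touching $\gamma_b$ has $\bs\omega_\xi$-measure zero.

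Suppose instead that a positive-measure set of curves reaches the boundary. Since the curves are ordered (Remark \ref{rem:oderderd}), they are parametrized monotonically by the level $\alpha$. I will first treat the curves that hit the boundary forward in time. Choose two good levels $\alpha_1<\alpha_2$ and a common time $\bar t$ at which both curves are defined, with $\gamma^\ell=\gamma_{\alpha_1}^\xi$ and $\gamma^r=\gamma_{\alpha_2}^\xi$ hitting $\gamma_b$ at times $t^\ell<t^r$. Because $\partial_x\br H[\xi]>0$, the left curve exits first: between the two curves $\br H$ is pinched, and near the boundary the left curve is closer. Apply \eqref{eq:balanceprop}. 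The boundary term is zero and both curve terms vanish, which gives
\[
\int_{\gamma^\ell(\bar t)}^{\gamma^r(\bar t)}\bs\chi[\xi](\bs u(\bar t,x))\dif x=0.
\]
This left side equals $\alpha_2-\alpha_1>0$, using $\bs\chi[\xi]=\partial_x\br H[\xi]\ge c>0$ from \eqref{eq:chipos}. This is a contradiction. Curves that exit backward in time are handled by the time-reversed balance, obtained from Proposition \ref{prop:balance} with the time cutoff placed at the final time; the same argument applies. A degenerate case remains: only one curve of positive measure, or curves touching at a single instant. It is excluded because $\bs\omega_\xi$ is a pushforward of Lebesgue measure on levels, so a positive-measure set contains two distinct levels whose curves both reach the boundary.

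The main obstacle is the geometric bookkeeping needed to fit the hypotheses \eqref{eq:curv1}--\eqref{eq:curv2} exactly. We must exhibit a common starting time $\bar t$ for both curves and check that the region $\Delta$ between them contains no other boundary contact. We must also show that curves cannot touch $\gamma_b$ tangentially and then re-enter. To handle this, I would take $\bar t$ slightly before the first exit time, at a time in the intersection of the good sets of both curves. I would then use that the level sets of $\br H$ are nested, which follows from the strict monotonicity in $x$. From this, once a curve leaves through $\gamma_b$, every curve to its left at that time has already left. This yields the ordering $t^\ell\le t^r$. With that in place, the balance forces $\alpha_2=\alpha_1$, so only a null set of levels can touch $\gamma_b$, and $\bs\omega_\xi$-a.e.\ curve has $I_\gamma=\mathbb R$ and stays strictly above $\gamma_b$.
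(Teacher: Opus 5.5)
Your proposal is correct and follows essentially the same route as the paper: assume a good curve of $\Gamma[\xi]$ reaches $\gamma_b$, take a second good curve to its left, which by the ordering of level sets reaches the boundary earlier, and apply the balance \eqref{eq:balanceprop}. Its boundary term vanishes since $\xi>w^\star$ and its lateral terms vanish by the characteristic property, which contradicts the positivity of $\int_{\gamma^\ell(\bar t)}^{\gamma^r(\bar t)}\bs\chi[\xi](\bs u(\bar t,x))\dif x$. The only minor difference is that you read this positivity off as $\alpha_2-\alpha_1$ from the Hamiltonian levels, whereas the paper obtains it via Lebesgue points and \eqref{eq:positivechidelta}; your detour through a positive-measure set of exiting curves is unnecessary, since one good exiting curve already gives the contradiction.
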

\begin{proof}
    Notice that by Definition \ref{def:lagradef} for $\bs \omega_\xi$-a.e. $\gamma \in \Gamma[\xi]$, the following holds
    \begin{enumerate}
        \item $(t,\gamma(t))$ is a Lebesgue point of $\bs u$ for almost every $t$;
        \item $w(t,\gamma(t)) \neq \xi$ for almost every $t$. This second fact follows using the same argument of step \textbf{2.} of Proposition \ref{prop:lagraprop}, by choosing there $S = \big\{ (t,x) \;  |\; w(t,x) = \xi \big\}$.
    \end{enumerate}
    Consider the set of curves $\Gamma[\xi]$, of the Lagrangian representation \ref{def:lagradef} representing $\bs \chi[\xi]$ in the region $\mr{H}^+$. By contradiction, assume that there is a curve $\gamma^r \in \Gamma[\xi]$ satisfying (1), (2) such that for some $\bar t$  there holds $\gamma^r(\bar t) > \gamma_b(\bar t)$ and that there is a first $t^r >\bar t$ such that $\gamma^r(t^r) = \gamma_b(t^r)$ (if instead $t^r < \bar t$, the proof is entirely symmetric). Since for almost every $t \in I_{\gamma^r}$, $(t, \gamma^r(t))$ is a Lebesgue point of $\bs u$, and therefore of $w$, and $w(t, \gamma^r(t)) > \xi$ by Point (3) of Definition \ref{def:lagradef}, relying on point 1 of Proposition \ref{prop:localspeed}, there holds for arbitrarily small $\delta$ and possibly a slightly different $\bar t$ that 
\begin{equation}\label{eq:positivechidelta}
\int_{\gamma^r(\bar t)-\delta}^{\gamma^r(\bar t)} \bs \chi[\xi](\bs u(\bar t, x)) \dif x > 0.
\end{equation}
In particular, by (2) of Definition \ref{def:lagradef}, there exists another curve $\gamma^\ell \in \Gamma[\xi]$ with $\gamma^\ell(\bar t) < \gamma^r(\bar t)$. Moreover, since the curves are ordered (see Remark \ref{rem:oderderd}), being level sets of Lipschitz functions, $\gamma^\ell$ is forced to reach the boundary before $\gamma^r$ at some $t^\ell$:
$$
\gamma^\ell(t^\ell) = \gamma_b(t^\ell), \qquad t^\ell < t_r
$$
We can now use Proposition \ref{prop:balance} to deduce that 
      \begin{equation}\label{eq:balance1}
          \begin{aligned}
            \int_{t^\ell}^{t^r} &   \bs \Phi^+(\xi) \sqrt{1+ \dot \gamma_b^2}\dif  t   - \int_{\gamma^\ell(\bar t)}^{\gamma^r(\bar t)} \bs \chi[\xi](\bs u(\bar t,x)) \dif x \\
              &  = \int_{\bar t}^{t^\ell}  (\bs \chi[\xi](\bs u(t, \gamma^\ell(t))), \bs \psi[\xi](\bs u(t, \gamma^\ell(t)))) \cdot (-\dot \gamma^\ell(t), 1)  \dif t\\
   & \qquad -\int_{\bar t}^{t^r}  (\bs \chi[\xi](\bs u(t, \gamma^r(t))), \bs \psi[\xi](\bs u(t, \gamma^r(t)))) \cdot (-\dot \gamma^r(t), 1)  \dif t. 
          \end{aligned}
      \end{equation}
      Since 
$$
(\bs \chi[\xi](\bs u), \bs \psi[\xi](\bs u)) = \bs \chi[\xi](\bs u) ( 1, \bs \lambda_1[\xi](\bs u))
$$
is parallel to $( 1, \bs \lambda_1[\xi](\bs u))$, 
we deduce that the right hand side of \eqref{eq:balance1} is zero, because $\gamma^\ell, \gamma^r$ are characteristic curves satisfying (1) of Definition \ref{def:lagradef}:
$$
\dot \gamma^r(t) = \bs \lambda_1[\xi](\bs u(t, \gamma^r(t)), \qquad \dot \gamma^\ell(t) = \bs \lambda_1[\xi](\bs u(t, \gamma^\ell(t)) \quad \text{for a.e. $t$}.
$$
Therefore proceeding from \eqref{eq:balance1},  we deduce that 
$$
\begin{aligned}
0 =  \int_{t^\ell}^{t^r} &   \bs \Phi^+(\xi) \sqrt{1+ \dot \gamma_b^2}\dif  t   &  = \int_{\gamma^\ell(\bar t)}^{\gamma^r(\bar t)} \bs \chi[\xi](\bs u(\bar t, x)) \dif x  > 0
\end{aligned}
$$
where we recall that the first equality holds because $\xi$ is greater then the supremum of the support of $ \bs \Phi$, and the last inequality is due to \eqref{eq:positivechidelta}. This is a contradiction, therefore the claim is proved.
\end{proof}

\begin{prop}\label{prop:lowertracepre}
     Let $\xi < w^\sharp :=\min\{w^\star, \underline w + \bar r\}$, $\xi > \underline w$, satisfy \eqref{eq:xinondeg}. Then $\wt{\bs \omega}_\xi$-a.e. $\wt \gamma \in \wt \Gamma[\xi]$ satisfies $I_\gamma = \mathbb R$, and
    $$
    \wt{\gamma}(t) > \gamma_b(t) \qquad \forall \; t \in \mathbb R.
    $$
\end{prop}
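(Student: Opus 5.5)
We argue by contradiction, following the scheme of Proposition \ref{lemma:highcurves} but now with the epigraph entropies, and with the contradiction coming from the hypograph traces $\bs \Phi^+$ rather than from $\wt{\bs \Phi}^+$.

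\textbf{Step 1: reduction.} As in Proposition \ref{lemma:highcurves}, for $\wt{\bs\omega}_\xi$-a.e. $\wt\gamma\in\wt\Gamma[\xi]$ we may assume three things. First, $(t,\wt\gamma(t))$ is a Lebesgue point of $\bs u$ for a.e.\ $t$. Second, $w(t,\wt\gamma(t))\neq\xi$ for a.e.\ $t$, by \eqref{eq:xinondeg} and Step \textbf{2.} of Proposition \ref{prop:lagraprop}. Third, $w(t,\wt\gamma(t))\le\xi$, hence $<\xi$, a.e. Suppose such a curve exists with $\wt\gamma(\bar t)>\gamma_b(\bar t)$, and let $t^r>\bar t$ be a first time with $\wt\gamma(t^r)=\gamma_b(t^r)$; the case $t^r<\bar t$ is symmetric. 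By Remark \ref{rem:levellagr}, maximal components of $I_{\wt\gamma}$ can only end on $\partial\mr H^+=\mathrm{Graph}(\gamma_b)$, so ruling out boundary contact gives $I_{\wt\gamma}=\mathbb R$.

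\textbf{Step 2: localize the contact.} I first try to show that the contact forces $w\le\xi$ near $(t^r,\gamma_b(t^r))$ in a quantitative sense. Take the family of curves of $\wt\Gamma[\xi]$ lying between $\wt\gamma$ and $\gamma_b$ at time $\bar t$. By ordering (Remark \ref{rem:oderderd}) they all hit $\gamma_b$ before $t^r$. Moreover, by \eqref{eq:chitildepos} and the superposition \eqref{eq:chisuper1}, they fill the region $\Delta$ between $\gamma_b$ and $\wt\gamma$ on $[\bar t,t^r]$, up to a $\wt{\bs\chi}$-null set.

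Applying Lemma \ref{lemma:fubinis} (the $\wt{\bs\chi}$ version) with $f=\mathbf 1_{\{w>\xi\}}$ on $\Delta$ gives
$$\iint_\Delta \wt{\bs\chi}[\xi](\bs u)\mathbf 1_{\{w>\xi\}}=0.$$
This holds because $\wt{\bs\omega}_\xi$-a.e.\ curve satisfies $w\le\xi$ along it. Since $\wt{\bs\chi}[\xi](\bs u)>0$ wherever $w\le\xi$ and $\wt{\bs\chi}[\xi]=0$ where $w>\xi$, this identity is not informative by itself. The real content is that the region $\Delta'$ swept by the curves through a segment of positive $\wt{\bs\chi}$-mass is a set of positive measure on which $w\le\xi$ a.e.

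\textbf{Step 3: compute $\bs\Phi^+$ on the contact interval.} Take $\hat\xi\in(\xi,w^\star)$ with $\bs\Phi^+(\hat\xi)\neq0$; such $\hat\xi$ can be taken in a positive-measure set, by the definition of $w^\star$ and since $\xi<w^\star$. Apply a balance of the type \eqref{eq:balanceprop} to $\bs\chi[\hat\xi]$ in the region bounded by $\gamma_b$ and two curves $\gamma^\ell,\gamma^r\in\wt\Gamma[\xi]$ reaching the boundary at $t^\ell<t^r$. These curves satisfy \eqref{eq:gammanodeg} for $\hat\xi$, since $w<\xi<\hat\xi$ along them. On them $\bs\chi[\hat\xi](\bs u)=\bs\psi[\hat\xi](\bs u)=0$, since $w<\hat\xi$, so the lateral terms vanish. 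Likewise, the initial term $\int\bs\chi[\hat\xi](\bs u(\bar t,x))\dif x$ vanishes if the region at time $\bar t$ lies in $\{w\le\xi\}$.

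This yields $(t^r-t^\ell)\bs\Phi^+(\hat\xi)\sqrt{1+\dot\gamma_b^2}=0$, so $\bs\Phi^+(\hat\xi)=0$ for a.e.\ $\hat\xi>\xi$. That says $w^\star\le\xi$, a contradiction. Note that the extension of Proposition \ref{prop:balance} to curves of $\wt\Gamma[\xi]$ tested against $\bs\chi[\hat\xi]$ is legitimate: its proof only uses Lebesgue points and \eqref{eq:gammanodeg}.

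\textbf{Main obstacle.} The hard part is the initial term: controlling $w$ on the whole segment $[\gamma^\ell(\bar t),\gamma^r(\bar t)]$, not just on the curves. I would handle it by choosing $\bar t$ as close to $t^r$ as needed and shrinking the region. Alternatively, I would let the left curve be $\gamma_b$ itself, taking $t^\ell=\bar t$ as in the Remark after Proposition \ref{prop:balance}, and then let $\bar t\uparrow t^r$.

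The initial term is bounded by $C|\{x\in(\gamma_b(\bar t),\wt\gamma(\bar t)):w>\xi\}|$, which is at most $C(\wt\gamma(\bar t)-\gamma_b(\bar t))$. The boundary term is $\bs\Phi^+(\hat\xi)\cdot(t^r-\bar t)$, while $\wt\gamma(\bar t)-\gamma_b(\bar t)\le L(t^r-\bar t)$. So the crude bound is not enough, and I need the density of $\{w>\xi\}$ in these thin triangles to go to zero. I would get this from Step 2: the curves of $\wt\Gamma[\xi]$ foliate the triangle with $\wt{\bs\chi}$-density bounded below, and carry $w\le\xi$ a.e. Varying $\hat\xi$ in a positive-measure set then also lets me avoid null sets.
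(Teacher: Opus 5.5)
Your architecture matches the paper's. You argue by contradiction and use ordering to make the curves left of a contacting curve also hit $\gamma_b$. You then balance $\bs\chi[\hat\xi]$, $\hat\xi>\xi$, between two curves of $\wt\Gamma[\xi]$ and $\gamma_b$, with zero lateral flux because $w<\xi$ along the curves; this is the paper's Step 3.

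\textbf{The gap is the initial term, which you flag but do not close.}
\begin{itemize}
\item \emph{The foliation claim fails.} Your Step 2 says the curves of $\wt\Gamma[\xi]$ fill the triangle and force $w\le\xi$ a.e.\ there. But $\wt{\bs\omega}_\xi$ only charges $\{w\le\xi\}$, where $\wt{\bs\chi}[\xi](\bs u)>0$; the whole set $\{w>\xi\}$ is $\wt{\bs\chi}[\xi]$-null. On that set $\nabla\wt{\br H}[\xi]=0$, so a region with $w>\xi$ can sit between two curves of $\wt\Gamma[\xi]$ as a hole bounded by characteristics carrying $w<\xi$. The density lower bound \eqref{eq:chitildepos} holds only on $\{w\le\xi\}$, so the argument is circular.
\item \emph{The shrinking-triangle variant is also circular.} Take $\gamma^\ell=\gamma_b$. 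Since the flux through $\wt\gamma$ vanishes, the balance gives, for every $\bar t<t^r$, the exact identity $\int_{\gamma_b(\bar t)}^{\wt\gamma(\bar t)}\bs\chi[\hat\xi](\bs u(\bar t,x))\dif x=(t^r-\bar t)\sqrt{1+\dot\gamma_b^2}\,\bs\Phi^+(\hat\xi)$. So making the initial term $o(t^r-\bar t)$ is literally equivalent to $\bs\Phi^+(\hat\xi)=0$, which is what you want to prove. Letting $\bar t\uparrow t^r$ gives no information.
\end{itemize}

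\textbf{The missing idea is a dynamical decay estimate from genuine nonlinearity, which your proposal never uses.} The paper does not shrink toward the contact point. It proceeds as follows.
\begin{itemize}
\item Fix $\bar t$ with $\wt\gamma^r(\bar t)$ at distance $d>0$ from $\gamma_b$, at a Lebesgue point. Take $\wt\gamma^\ell\in\wt\Gamma[\xi]$ within $\delta$ of it.
\item Let $\hat w_\delta$ be the ess sup of $w$ on $(\wt\gamma^\ell(\bar t),\wt\gamma^r(\bar t))$. A balance with zero lateral flux shows $w\le\hat w_\delta$ in the whole strip $\hat\Omega$. Hence $\bs\chi[\hat\xi]$, for $\hat\xi$ within $\bar r$ of $\hat w_\delta$, has a Lagrangian representation there.
\item Take $\hat\xi_2<\hat\xi_1$ near $\hat w_\delta$, a $\bs\chi[\hat\xi_1]$-characteristic $\hat\gamma$, and balance $\bs\chi[\hat\xi_2]$ between $\hat\gamma$ and $\wt\gamma^r$. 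The monotonicity $\partial_\xi\bs\lambda_1[\xi]\ge c$ drains this mass, which is at most $C\delta$, at rate $\gtrsim\hat w_\delta-\xi$. Meanwhile the strip survives for a time $\gtrsim d$. Hence $\hat w_\delta-\xi\lesssim\delta/d$.
\end{itemize}
This yields only $\hat w_\delta\le\xi+\varepsilon(\delta)$, not $w\le\xi$; that is why the strict inequality $\xi<w^\sharp\le w^\star$ is needed. Choose $\delta$ with $\hat w_\delta<w^\star$ and run your Step 3 with $\hat\xi>\hat w_\delta$, not merely $\hat\xi>\xi$. This gives $\bs\Phi^+(\hat\xi)=0$ for a.e.\ $\hat\xi>\hat w_\delta$, contradicting the definition of $w^\star$.
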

\begin{proof}
  \vspace{0.5cm}
\textbf{1.} Let $\xi$ be  as in the statement. Again, as above, notice that for $\wt{\bs \omega}_\xi$-a.e. $\wt \gamma \in \Gamma[\xi]$, the following holds
    \begin{enumerate}
        \item $(t,\wt \gamma(t))$ is a Lebesgue point of $\bs u$ for almost every $t$;
        \item  $w(t,\wt \gamma(t)) < \xi$ for almost every $t$.
    \end{enumerate}
    Let $\wt \gamma^r \in \wt \Gamma[\xi]$ be a curve satisfying (1),(2). Assume that $\wt{\gamma}^r$ reaches the boundary $\gamma_b$ at some later time $t^r > \bar t$:
$$
\wt{\gamma}^r(t^r) = \gamma_b(t^r), \qquad \gamma^r(\bar t) > \gamma_b(\bar t).
$$
Notice that since the curves are ordered, all the curves with $\gamma(\bar t) < \wt \gamma^r(\bar t)$ belonging to $\wt \Gamma[\xi]$ must reach the boundary as well. Since we have 
$$
(e_{\bar t})_\sharp \wt {\bs \omega}_\xi = \wt{\bs \chi}[\xi](\bs u(\bar t, \cdot))
$$
surely there exists another curve (which we still call $\wt \gamma^r$) reaching the boundary at some other time (which we still call $t^r$) and for which $(\bar t,\wt \gamma^r(\bar t))$ is a Lebesgue point for the function $\bs u$, possibly for a slightly different $\bar t$.
 For fixed $\delta >0$ small, let then ${\wt \gamma}^\ell \in \wt \Gamma[\xi]$  with $\wt \gamma^r(\bar t)-\delta <  \wt \gamma^\ell(\bar t) < \wt \gamma^r(\bar t)$, and satisfying (1), (2) above reaching the boundary at some time $t^\ell < t^r$. Notice such a curve surely exists thanks to the fact that $(\bar t, \wt \gamma^r(\bar t))$ is a Lebesgue point of $\bs u$, and therefore of $w$. Consider the effective supremum of $w$ in $(\wt \gamma^\ell(\bar t), \wt \gamma^r(\bar t))$:
$$
\hat w_\delta := \esssup \Big\{ \xi \;  | \; \bs \chi[\xi](\bs u)_{\mid (\wt \gamma^\ell(\bar t), \wt \gamma^r(\bar t))} \neq 0 \Big\}.
$$

\vspace{0.5cm}
\textbf{2.} We claim that for every $\varepsilon > 0$, there exists $\delta$ small enough such that $\hat w_\delta \leq \xi +  \varepsilon$.

\vspace{0.5cm}
 
Assume that $\hat w_\delta > \xi$ (in the other case there is nothing to prove).
 Consider $\hat \xi \in (\hat w_\delta - 4\hat \varepsilon, \hat w_\delta)$, for 
 $$\hat \varepsilon = \min\{ (\hat w_\delta - \xi)/4, \, \bar r/4\}$$
and let $\hat \Gamma_{\hat \xi}$ be the curves of the Lagrangian representation $\hat {\bs \omega}_\xi$ of $\bs \chi[\hat  \xi]$ restricted to the region 
 $$
 \hat \Omega := \{ (t,x) \; | \; \wt \gamma^\ell(t) \leq x \leq \wt \gamma^r(t), \quad \bar t \leq t \leq t^\ell\} 
 $$
 according to Definition \ref{def:lagradef}. The existence of the representation is ensured by Proposition \ref{prop:lagraprop}, as soon as we can prove that
$$
\hat \xi \geq\esssup w_{\mid \, \hat \Omega} - \bar r.
$$
To prove it, notice that by the choice of $\hat \xi$, we have $\hat \xi  > \hat w-4\hat \varepsilon > \hat w-\bar r$, therefore it is sufficient to show that $\hat w \geq \sup w_{\mid \, \hat \Omega}$. 
If, by contradiction, $\hat w < \sup w_{\mid \hat \Omega}$, then there must be $\bar \xi > \hat w$ and a time $t > \bar t$ such that
\begin{equation}
\int_{\wt \gamma^\ell(t)}^{\wt \gamma^r(t)} \bs \chi[\bar \xi](\bs u(t, x)) \dif x > 0 
\end{equation}
However, by computing the balance in the region between the curves $\wt \gamma^\ell, \wt \gamma^r$, and between times $\bar t, t$, using point \textbf{2.} of Proposition \ref{prop:balance}, we get that 
\begin{equation}
\begin{aligned}
&\int_{\widetilde{\gamma}^\ell(t)}^{\widetilde{\gamma}^r(t)}
\boldsymbol{\chi}[\bar{\xi}](\boldsymbol{u}(t,x))
\,\mathrm{d}x \\[6pt]
&\quad=
\int_{\widetilde{\gamma}^\ell(\bar{t})}^{\widetilde{\gamma}^r(\bar{t})}
\boldsymbol{\chi}[\bar{\xi}](\boldsymbol{u}(\bar t,x))
\,\mathrm{d}x \\
&\qquad+
\int_{\bar{t}}^{t}
\bigl(\boldsymbol{\chi}[\bar{\xi}](\boldsymbol{u}(t,\widetilde{\gamma}^\ell(t))),\,
      \boldsymbol{\psi}[\bar{\xi}](\boldsymbol{u}(t,\widetilde{\gamma}^\ell(t)))\bigr)
\cdot\bigl(-\dot{\widetilde{\gamma}}^\ell(t),\,1\bigr)
\,\mathrm{d}t \\
&\qquad-
\int_{\bar{t}}^{t}
\bigl(\boldsymbol{\chi}[\bar{\xi}](\boldsymbol{u}(t,\widetilde{\gamma}^r(t))),\,
      \boldsymbol{\psi}[\bar{\xi}](\boldsymbol{u}(t,\widetilde{\gamma}^r(t)))\bigr)
\cdot\bigl(-\dot{\widetilde{\gamma}}^r(t),\,1\bigr)
\,\mathrm{d}t \\[6pt]
&\quad= 0.
\end{aligned}
\end{equation}
where the last equality follows from the facts that:
\begin{itemize}
    \item the first term in the right hand side is zero due to the fact that $\bar \xi > \hat w$
    \item the second and third terms in the right hand sides are zero (there is no flux through the boundaries) because we have, by point (2) above, being the curves $\wt \gamma^\ell, \wt \gamma^r \in \wt \Gamma_{\xi}$ and $\xi < \hat w_\delta < \bar \xi$,
$$
w(t, \wt \gamma^\ell(t)) < \xi < \bar \xi, \qquad w(t, \wt \gamma^r(t)) < \xi < \bar \xi \qquad \text{for a.e. $t \in (\bar t, t)$}
$$
    and therefore by definition $\bs \chi[\bar \xi](\bs u(t, \wt \gamma^\ell(t))) = \bs \psi[\bar \xi](\bs u(t, \wt \gamma^\ell(t)))= 0$, as well as $\bs \chi[\bar \xi](\bs u(t, \wt \gamma^r(t))) = \bs \psi[\bar \xi](\bs u(t, \wt \gamma^r(t)))= 0$.
\end{itemize}
This proves that $\bs \chi[\hat \xi]$ admits the Lagrangian representation $\bs \omega_{\hat \xi}$ in the domain $\hat \Omega$, for every $\hat \xi \in (\hat w-4\hat \varepsilon, \hat w)$.

\vspace{0.3cm}

We proceed with the proof of this step, and consider now $\hat \xi_1 \in (\hat w- \hat \varepsilon, \hat w)$ and  $\hat \xi_2 \in (\hat w - 3\hat{\varepsilon}, \hat w - 2 \hat \varepsilon)$ with the corresponding Lagrangian representations $\bs \omega_{\hat \xi_i}$ of $\bs \chi[\hat \xi_i]$ in the region $\hat \Omega$. Since the measure of degenerate $\xi$ is zero, we can require  that $\xi_1, \xi_2$ satisfy the non-degeneracy condition \eqref{eq:xinondeg}, so that $\bs \omega_{\hat \xi_i}$-almost every curve satisfies $w(t, \gamma(t)) > \hat \xi_i$ for almost every $t$. Consider then such a curve $\hat \gamma \in \hat \Gamma_{\hat \xi_1}$ and compute the balance for $\bs \chi[\hat \xi_2]$ in a region 
$$
\hat \Delta := \{(t, x) \; | \; \hat \gamma(t) \leq x \leq \wt \gamma^r(t), \qquad \bar t \leq t \leq \hat t\}.
$$
We deduce
\begin{equation}\label{eq:n1}
\begin{aligned}
      & \int_{\hat \gamma(\hat t)}^{\wt \gamma^r(\hat t)}  \bs \chi[\hat \xi_2](\bs u(\hat t,x)) \dif x    - \int_{\hat \gamma(\bar  t)}^{\wt \gamma^r(\bar t)} \bs \chi[\hat \xi_2](\bs u(\bar t,x)) \dif x \\[6pt]
      & \quad = -\int_{\bar t}^{\hat t} (\bs \chi[\hat \xi_2]( \bs u(t, \wt \gamma^r(t))), \bs \psi[\hat \xi_2]( \bs u(t, \wt \gamma^r(t)))) \cdot (-\dot {\wt \gamma}^r(t), 1) \dif t\\
      & \qquad  +\int_{\bar t}^{\hat t} (\bs \chi[\hat \xi_2]( \bs u(t, \hat \gamma(t))), \bs \psi[\hat \xi_2]( \bs u(t, \hat \gamma(t)))) \cdot (-\dot {\hat  \gamma}(t), 1) \dif t\\[6pt]
      & \quad \leq  - C (\hat \xi_1 -\hat \xi_2)(\hat t -\bar t) = -C \hat \varepsilon(\hat t- \bar t).
\end{aligned}
\end{equation}
where $C>0$ is a constant depending only on the system. Here in the last line we have used that the first term in the right hand side of the balance is zero (because one has $w(t, \wt \gamma^r(t)) < \xi < \hat w_\delta < \hat \xi_2$, for almost every $t$, the first inequality following from (2) above), while for the second one we have
$$
\begin{aligned}
     & \big(\bs \chi[\hat \xi_2]( \bs u(t, \hat \gamma(t))), \bs \psi[\hat \xi_2]( \bs u(t, \hat \gamma(t)))\big) \cdot \big(-\dot {\hat  \gamma}(t), 1\big)  \\[6pt]
     & \quad =  \bs \chi[\hat \xi_2]\;(\bs u(t, \hat \gamma(t)))\Big(1, \;\bs \lambda_1[\hat \xi_2](\bs u(t, \hat \gamma(t)))\Big) \cdot \Big(- \bs \lambda_1[\hat \xi_1](\bs u(t, \hat \gamma(t))), \; 1\Big)\\[6pt]
     & \quad = \bs \chi[\hat \xi_2](\bs u(t, \hat \gamma(t)))\Big( \bs \lambda_1[\hat \xi_2](\bs u(t, \hat \gamma(t))) -  \bs \lambda_1[\hat \xi_1](\bs u(t, \hat \gamma(t))) \Big)\\[6pt]
     & \quad < -C (\hat \xi_1 - \hat \xi_2)
\end{aligned}
$$
 where $C>0$ depends on Proposition \ref{prop:localspeed}, and therefore only on the system (here we recall that  $\hat \xi_2 \in (\esssup w_{\mid \hat \Omega}-\bar  r, \esssup w_{\mid \hat \Omega})$ and therefore there holds $\bs \chi[\xi](\bs u(t,\gamma(t)) \geq c > 0$ for a.e. $t$)
Moreover, we clearly have
\begin{equation}\label{eq:n2}
  \int_{\hat \gamma(\hat t)}^{\wt \gamma^r(\hat t)} \bs \chi[\hat \xi_2](\bs u(\hat t,x)) \dif x   - \int_{\hat \gamma(\bar t)}^{\wt \gamma^r(\bar t)} \bs \chi[\hat \xi_2](\bs u(\bar t,x))  \dif x \geq - (\sup \bs \chi) \delta.
\end{equation}
Combining \eqref{eq:n1}, \eqref{eq:n2} we deduce 
$$
\hat{\varepsilon} \leq \frac{\sup \bs \chi}{C} \frac{\delta }{\hat t - \bar t}
$$
 Since $(\hat t - \bar t)$ can be taken of the order of the distance to the boundary $d( (\bar t, \wt \gamma^r(\bar t)), \partial \Omega)$ (because the curves are Lipschitz), $\hat{\varepsilon} = |\hat w-\xi|/4$ can be made arbitrarily small for small $\delta$.

\vspace{0.5cm}
\textbf{3.} 
Next, by taking $\delta$ sufficiently small, by the previous step, we have that
\begin{equation}
\xi < \hat w_\delta < w^\sharp.
\end{equation}
Consider then any $\hat \xi > \hat w$. Computing the balance using Proposition \ref{prop:balance}, we deduce the balance equation for $\bs \chi[\hat \xi]$ in the region delimited by the curves $\wt \gamma^r, \wt \gamma^\ell$ introduced in Step \textbf{1.}:
\begin{equation}
    \begin{aligned}
     & \int_{\bar t}^{t^\ell}  (\bs \chi[\hat \xi](\bs u(t, \wt \gamma^\ell(t))), \bs \psi[\hat \xi](\bs u(t, \wt \gamma^\ell(t)))) \cdot (-\dot {\wt\gamma}^\ell(t), 1) \dif x \dif t \\
        &
    \qquad -\int_{\bar t}^{t^r}  (\bs \chi[\hat \xi](\bs u(t, \wt \gamma^r(t))), \bs \psi[\hat \xi](\bs u(t, \wt \gamma^r(t)))) \cdot (-\dot {\wt{\gamma}^r}(t), 1) \dif x \dif t \\[6pt]
    & \quad = \int_{\mathrm{Graph}\, \gamma_b} \bs \Phi(\hat \xi) \mathbf 1_{t^\ell < t < t^r} \dif \mathscr H^1(t,x) - \int_{\gamma^\ell(\bar t)}^{\wt \gamma^r(\bar t)} \bs \chi[\hat \xi ](\bs u(\bar t,x) )\dif x
    \end{aligned}
\end{equation}

 Since $\hat \xi > \hat w_\delta \geq w(t, \wt \gamma^\ell(t)), w(t, \wt \gamma^r(t))$, for a.e. $t$, the left hand side of the equality above is zero.  
 Moreover, the second term in the right hand side is zero again because $\hat \xi > \hat w$. Therefore we deduced that $\bs\Phi^+(\hat \xi) = 0$ for a.e. $\hat \xi > \hat w$, a contradiction because $$\hat w_\delta < w^\sharp \leq w^\star = \esssup \Big\{\xi \; | \; \bs \Phi^+(\xi) \neq 0 \Big\}.$$

\end{proof}

Define
$$\wt w_\star := \essinf \, \Big\{ \xi\; | \;  \wt{\bs\Phi}^+(\xi ) \neq 0\Big\}.$$  We recall that $\bs \Phi^+$, $\wt{\bs \Phi}^{+}$ are the normal traces respectively of the vector fields $X_\xi:= (\bs \chi[\xi], \bs \psi[\xi])$ and $\wt X_\xi:= (\wt{\bs \chi}[\xi], \wt{\bs \psi}[\xi])$, which satisfy 
$$
\esssup \Big\{\xi \; | \; X_\xi(t,x) \neq 0 \Big\} \leq w(t,x) \leq \essinf  \Big\{\xi \; | \; \wt X_\xi(t,x) \neq 0 \Big\} \quad \text{for a.e. $(t,x)$}.
$$
If, say, $w(t,x)$ was uniformly Lipschitz up to the boundary, we would have of course $\wt w_\star \geq w^\star$, due to the above equalities. In principle, for a general non-smooth solution, it could be that
$$
\wt w_\star < w^\star
$$
if the trace of $\bs u$ on $\gamma_b$ is not strong.
A useful consequence of the previous proposition is that at least one has $\wt w_\star > w^\sharp$, where $w^\sharp$ is as defined above, $w^\sharp =\min\{w^\star, \underline w + \bar r\}$, as it is shown in the next corollary.
\begin{coro}\label{prop:lowertrace}
    For every $\xi \in (\underline w, \overline w)$ with $\xi < w^\sharp$ satisfying \eqref{eq:xinondeg}, there holds $\wt {\bs \Phi}(\xi) = 0$. 
\end{coro}
\begin{proof}
Let $\underline w < \xi < w^\sharp$ satisfy the nondegeneracy condition \eqref{eq:xinondeg}. Then by Proposition \ref{prop:lowertracepre}  every $\wt \gamma \in \wt \Gamma[\xi]$ satisfies $I_\gamma = \mathbb R$, and
    $$
    \wt{\gamma}(t) > \gamma_b(t) \qquad \forall \; t \in \mathbb R.
    $$
We fix any $t_1 < t_2$.  We use Lemma \ref{lemma:fubinis}, choosing
$$
f(t,x) = (1, \bs \lambda_1[\xi](\bs u(t,x))) \cdot \vec n, \qquad A = \Big\{ (t,x) \; | \; \gamma_b(t) < x < \gamma_b(t)+\delta \Big\}
$$
and we compute using the Lagrangian representation $\wt {\bs \omega}_\xi$, and recalling the analogous of \eqref{eq:lambdadef} for the epigraph entropies,
\begin{equation}
    \begin{aligned}
       &  \int_{t_1}^{t_2} \frac{1}{\delta} \int_{\gamma_b(t)}^{\gamma_b(t)+\delta} (\wt{\bs \chi}[\xi](\bs u(t,x)), \wt{\bs \psi}[\xi](\bs u(t,x))) \cdot \vec n \dif x \dif t \\[6pt]
        & \quad = \frac{1}{\delta}  \int_{t_1}^{t_2}\int_{\gamma_b(t)}^{\gamma_b(t)+\delta} \wt{\bs \chi}[\xi](\bs u(t,x))(1, \bs \lambda_1[\xi](\bs u(t,x))) \cdot \vec n \dif x \dif t \\[6pt]
        & \quad= \frac{1}{\delta}\int_{\wt \Gamma[\xi]} \Bigg(\int_{t_1}^{t_2} \big(1, \,\dot {\wt \gamma}(t)\big) \cdot \vec n\; \mathbf 1_{\{\wt \gamma \; | \; \wt \gamma(t) \in (\gamma_b(t), \gamma_b(t)  + \delta)\}}({\wt \gamma})  \dif t\Bigg) \dif \wt{\bs \omega}_\xi(\wt \gamma)\\[6pt]
        & \quad= \frac{1}{\delta}\int_{\wt \Gamma[\xi]} \ms F_{\wt \gamma}\dif \wt{\bs \omega}_\xi(\wt \gamma)
    \end{aligned}
\end{equation}
where the penultimate equality holds because
$$
\bs  \lambda_1[\xi](\bs u(t, \wt \gamma(t)) = \dot {\wt \gamma}(t) \quad \text{for a.e. $t$}
$$
and we have set 
$$
\ms F_{\wt \gamma}:= \Bigg(\int_{t_1}^{t_2} \big(1, \,\dot {\wt \gamma}(t)\big) \cdot \vec n\; \mathbf 1_{\{\wt \gamma \; | \;\wt \gamma(t) \in (\gamma_b(t), \gamma_b(t)  + \delta)\}}({\wt \gamma})   \dif t\Bigg) 
$$
and $\vec n = \dot \gamma_b^\perp/|\dot \gamma_b|$. 
Next we partition the integral as follows:
\begin{equation}\label{eq:partint}
    \begin{aligned}
& \int_{\wt \Gamma[\xi]} \ms F_{\wt \gamma}\dif \wt{\bs \omega}_\xi(\wt \gamma) \\
& \quad = \int_{\Gamma^+} \ms F_{\wt \gamma}\dif \wt{\bs \omega}_\xi(\wt \gamma) + \int_{\Gamma^-} \ms F_{\wt \gamma}\dif \wt{\bs \omega}_\xi(\wt \gamma)
+ \int_{\Gamma^\star} \ms F_{\wt \gamma}\dif \wt{\bs \omega}_\xi(\wt \gamma)
 + \int_{\Gamma^m} \ms F_{\wt \gamma}\dif \wt{\bs \omega}_\xi(\wt \gamma)
    \end{aligned}
\end{equation}
where we set 
$$
I_{\wt \gamma}^{t_1, t_2}:= \Big\{t \in I_\gamma \cap(t_1,t_2) \;  |\; \wt \gamma(t) \in (\gamma_b(t) , \gamma_b(t) + \delta) \Big\}
$$
and 
$$
\Gamma^+ = \{{\wt \gamma} \in \wt \Gamma[\xi]\; | \; I_{\wt \gamma}^{t_1, t_2} \neq \emptyset, \quad \sup I_{\wt \gamma}^{t_1, t_2} = t_2, \quad \inf I_{\wt \gamma}^{t_1,t_2} \neq t_1\}
$$
$$
\Gamma^- = \{{\wt \gamma} \in \wt \Gamma[\xi]\; | \; I_{\wt \gamma}^{t_1, t_2} \neq \emptyset, \quad \sup I_{\wt \gamma}^{t_1, t_2} \neq t_2, \quad \inf I_{\wt \gamma}^{t_1,t_2} = t_1\}
$$
$$
\Gamma^\star = \{{\wt \gamma} \in \wt \Gamma[\xi] \; | \; I_{\wt \gamma}^{t_1, t_2} \neq \emptyset, \quad \sup I_{\wt \gamma}^{t_1, t_2} = t_2, \quad \inf I_{\wt \gamma}^{t_1,t_2} = t_1\}
$$
$$
\Gamma^m = \{{\wt \gamma} \in \wt \Gamma[\xi]\; | \; I_{\wt \gamma}^{t_1, t_2} \neq \emptyset, \quad \sup I_{\wt \gamma}^{t_1, t_2} \neq t_2, \quad \inf I_{\wt \gamma}^{t_1,t_2} \neq t_1\}
$$
Notice that we have that 
\begin{equation}\label{eq:gammam}
    \ms F_{\wt \gamma} = 0 \qquad \text{for all $\gamma \in \Gamma^m$}
\end{equation}
since these are the curves that enter and exit the region $(\gamma_b, \gamma_b+\delta)$. Next we show that the other curves cannot be too much: in fact we have that by (2) of Definition \ref{def:lagradef} there holds
$$
\wt{\bs \omega}_\xi(\Gamma^+) \leq \int_{\gamma_b(t_2)}^{\gamma_b(t_2) +\delta}  \wt{\bs \chi}[\xi](\bs u(t_2,x)) \dif x \, \leq\, C \delta.
$$
where $C := \sup \wt{\bs \chi}[\xi](\bs u)$
In the same way, 
$$
\wt{\bs \omega}_\xi(\Gamma^-), \wt{\bs \omega}_\xi(\Gamma^\star)\, \leq\, C \delta
$$
Moreover, we have for $\wt \gamma \in \Gamma^+$
$$
\ms F_{\wt \gamma} = \int_{I_{\wt \gamma}^{t_1,t_2}} (1, \dot{\wt \gamma}(t)) \cdot \vec n \dif t  = \wt \gamma(t_2)- \wt \gamma(\inf I_{\wt \gamma}^{t_1,t_2}) \, \leq\,\delta
$$
and entirely similar equalities hold for curves in $\Gamma^-, \Gamma^{\star}$
$$
\ms F_{\wt \gamma} \leq \delta \quad \text{for all $\wt \gamma \in \Gamma^-, \Gamma^{\star}$}.
$$
Therefore in total we have
\begin{equation}
    \int_{\wt \Gamma[\xi]} \ms F_{\wt \gamma}\dif \wt{\bs \omega}_\xi(\wt \gamma) \leq \wt{\bs \omega}_\xi(\Gamma^+ \cup \Gamma^- \cup \Gamma^{\star}) \cdot \delta \, \leq\, 3C \delta^2.
\end{equation}
We conclude that
\begin{equation}
\begin{aligned}
    \int_{t_1}^{t_2} \sqrt{1+\dot \gamma^2_b(t)}\wt{\bs \Phi}(\xi) \dif t & = \lim_{\delta \to 0^+} \int_{t_1}^{t_2} \frac{1}{\delta} \int_{\gamma_b(t)}^{\gamma_b(t)+\delta}(\wt{\bs \chi}[\xi](\bs u(t,x)), \wt{\bs \psi}[\xi](\bs u(t,x))) \cdot \vec n \dif x \dif t\\[6pt]
    & \leq  \lim_{\delta \to 0^+}  \frac{1}{\delta}3C \delta^2 = 0
    \end{aligned}
\end{equation}
   where the first equality is a direct consequence of \eqref{eq:normallimit}.
\end{proof}

We can now conclude the proof of the Liouville type theorem. 

\begin{proof}[Proof of Theorem \ref{thm:Liuv1}]
We show that the first Riemann invariant is constant and equal to $w^\star$.

\textbf{1.} We want to show  that $\esssup w \leq w^\star \leq \essinf w$, so that $w$ must be equal to the constant $w^\star$.
Assume first by contradiction that 
\begin{equation}\label{eq:supsopra}
    \overline w := \esssup_{{\mr H}^+} w > w^\star
\end{equation}
Let 
$$
\rho := \min\{ \bar r/2, (\overline w- w^\star)/2\} 
$$
and consider $\xi \in (\overline w - 2\rho, \overline w - \rho)$ satisfying the non degeneracy condition \eqref{eq:xinondeg}. By \eqref{eq:supsopra} and (2) of Definition \ref{def:lagradef}, the set $\Gamma_{\xi}$ is of positive measure for $\bs \omega_\xi$; moreover, as above, for $\bs \omega_\xi$ almost every $\gamma \in \Gamma_{\xi}$, the point $(t,\gamma(t))$ is a Lebesgue point of $\bs u$ satisfying $w(t,\gamma(t)) \neq \xi$ for almost every $t \in I_\gamma$. Consider any such curve $\gamma \in \Gamma_{\xi}$. By Proposition \ref{lemma:highcurves}, we have $I_\gamma = \mathbb R$ and $\gamma(t) > \gamma_b(t)$ for all times.

Consider also any $\hat \xi \in (\overline w -\rho, \overline w)$. Using Proposition \ref{prop:balance} on $\bs \chi[\hat \xi]$ in the domain
$$
\Delta(t_1, t_2) := \Big\{(t,x) \in {\mr H}^+ \; | \; \gamma_b(t) \leq x \leq \gamma(t), \quad t \in (t_1, t_2) \Big\}
$$
we deduce that
\begin{equation}\label{eq:t20}
\begin{aligned}
& \int_{\gamma_b(t_2)}^{\gamma(t_2)}  \bs \chi[\hat \xi](\bs u(t_2, x)) \dif x  - \int_{\gamma_b(t_1)}^{\gamma(t_1)} \bs \chi[\hat \xi](\bs u(t_1, x)) \dif x\\[6pt]
& \quad =- \int_{t_1}^{t_2} (\bs \chi[\hat \xi](\bs u(t, \gamma(t))), \bs \psi[\hat \xi](\bs u(t, \gamma(t)))) \cdot (-\dot \gamma(t), 1) \dif t \\
& \qquad + \int_{t_1}^{t_2}\bs  \Phi^+(\hat \xi) \sqrt{1+\dot \gamma_b^2}  \dif t 
\end{aligned}
\end{equation}
We further compute, recalling \eqref{prop:localspeed}, and relying on point (2) of Definition \ref{def:lagradef},
\begin{equation}
    \begin{aligned}
         & \int_{t_1}^{t_2}  (\bs \chi[\hat \xi](\bs u(t, \gamma(t)))), \bs \psi[\hat \xi](\bs u(t, \gamma(t))) \cdot (-\dot \gamma(t), 1) \dif t  \\[6pt]
         & \quad =  \int_{t_1}^{t_2} \bs \chi[\hat \xi](\bs u(t, \gamma(t)))(-\dot \gamma(t) + \bs \lambda_1[\hat \xi](\bs u(t, \gamma(t))) \dif t \\[6pt]
         & \quad= \int_{t_1}^{t_2} \bs \chi[\hat \xi](-\bs \lambda_1[\xi](\bs u(t, \gamma(t))) + \bs \lambda_1[\hat \xi](\bs u(t, \gamma(t)))) \dif t\\[6pt]
         & \quad \geq C (t_2-t_1)(\hat \xi - \xi)
    \end{aligned}
\end{equation}
where the last inequality follows by Proposition \ref{prop:localspeed}. Since the first term of \eqref{eq:t20} is positive, we conclude 
\begin{equation}
    \begin{aligned}
        -\int_{\gamma_b(t_1)}^{\gamma(t_1)} \bs \chi[\hat \xi](\bs u(t_1, x)) \dif x  & \leq -C (t_2- t_1)(\hat \xi - \xi) + \int_{t_1}^{t_2}\bs \Phi(\hat \xi)\sqrt{1+\dot \gamma_b^2}  \dif t
    \end{aligned}
\end{equation}
Since $t_2$ can be taken arbitrarily large and $\bs \Phi(\hat \xi) = 0$ by $\hat \xi > \xi > w^\star$, this is a contradiction, because the term in the left hand side is bounded and does not depend on $t_2$.

\textbf{2.} The proof of $\essinf w > w^\star$ is entirely symmetric, once we know that by Corollary \ref{prop:lowertrace} there holds 
$\wt {\bs\Phi}(\xi) = 0$ for all $\xi < w^\sharp$. In fact, it goes as follows. By contradiction, assume that
$$
\underline w := \essinf_{\Omega^+} w < w^\star
$$
and let
$$
\rho:= \min\{\bar r/2, (w^\star - \underline w )/2\}.
$$
and consider $\xi \in (\underline w , \underline w +\rho)$ satisfying \eqref{eq:xinondeg}. Again we consider a curve $\wt \gamma \in \wt \Gamma[\xi]$, satisfying the same properties as above, in particular it holds $I_{\wt \gamma} = \mathbb R$ by Proposition \ref{prop:lowertracepre}. Consider $\hat \xi \in (\underline w+ \rho, \underline w + 2\rho)$. We use Proposition \ref{prop:balance} for $\wt {\bs \chi}[\hat \xi]$ in the domain 
$$
\Delta(t_1, t_2) := \Big\{(t,x) \in \mr{H}^+ \; | \; \gamma_b(t) \leq x \leq \wt \gamma(t), \quad t \in (t_1, t_2) \Big\}
$$
and in the same way we deduce
\begin{equation}
    \begin{aligned}
        & \int_{\gamma_b(t_2)}^{\wt \gamma(t_2)} \wt{\bs \chi}[\hat \xi](\bs u(t_2,x)) \dif x -  \int_{\gamma_b(t_1)}^{\wt \gamma(t_1)} \wt{\bs \chi}[\hat \xi](\bs u(t_1,x)) \dif x \\
        & \quad \leq - C(t_2-t_1)(\hat \xi- \xi) + \int_{t_1}^{t_2}\wt{\bs \Phi}(\hat \xi)\sqrt{1+\dot \gamma_b^2} \dif t
    \end{aligned}
\end{equation}
By Corollary \ref{prop:lowertrace}, the last term in the right hand side is zero, and since $\wt {\bs \chi}[\hat \xi]\bs u((t_2, x)) \geq c > 0$ (by Proposition \ref{prop:localspeed} for $\wt \chi$, and by $\hat \xi \in (\underline w, \underline w+ \bar r)$) the first term in the left hand side is positive. Therefore we get that for all $t_2 > t_1$:
$$
-  \int_{\gamma_b(t_1)}^{\wt \gamma(t_1)} \wt{\bs \chi}[\hat \xi](\bs u(t,x)) \dif x \leq - C(t_2-t_1)(\hat \xi- \xi)
$$
which is a contradiction.
\end{proof}

\begin{proof}[Proof of Theorem \ref{thm:mainfes}]
    By Proposition \ref{prop:tangentu} and Theorem \ref{thm:Liuv1}, for almost every $\bar t$ the blow up limits $\bs u^{\bar t}_{\infty}$ are constant functions in half spaces
    $$
    \bs u^{\bar t}_{\infty} := \begin{cases}
        \bs u^-(\bar t) & \text{if $(t,x) \in \Omega_\infty^-$}\\
         \bs u^+(\bar t) & \text{if $(t,x) \in \Omega_\infty^+$}
    \end{cases}
    $$
 with $\Omega^\pm_\infty$ defined in \eqref{eq:omegainf}.
    In fact, they are uniquely determined by 
    \begin{equation}\label{eq:suptraces}
    \phi_1(\bs u^\pm(\bar t)) = \esssup \Big\{ \xi \; | \; \bs\Phi^\pm(\bar t, \gamma(\bar t), \xi ) \neq 0\Big\}
      \end{equation}
      \begin{equation}\label{eq:inftraces}
    \phi_2(\bs u^\pm(\bar t)) = \esssup \Big\{ \zeta \; | \; \bs\Psi^\pm(\bar t, \gamma(\bar t), \zeta ) \neq 0\Big\}
    \end{equation}
    where we recall that $\bs \Phi^{\pm}, \bs \Psi^\pm$ are the kinetic normal traces given by Proposition \ref{prop:weaktraces}. Notice that clearly $\bs u^{\pm}$ are measurable functions, e.g. being determined by \eqref{eq:suptraces}, \eqref{eq:inftraces}. By the compensated compactness (see e.g. \cite[Chapter 9]{Ser00}, \cite{Tza03}, or \cite{AMT25} for a summary of the results) the family $\{\bs u_r^{\bar t}\}_{r > 0}$ in \eqref{eq:blowupt} is strongly locally compact in $\mathbf L^1$.
Therefore, for almost every $\bar t$, the blow up limits in Proposition \ref{prop:tangentu} depend only on $\bar t$, and not on the particular subsequence.  By the above discussion, every subsequence converges to the same (constant) limit, and therefore we  obtain \eqref{eq:blowuptrace}: 
 \begin{equation}\label{eq:blowuptraceproof}
 \begin{aligned}
      \lim_{r \to 0^+}&  \frac{1}{r^2} \Bigg(\int_{B_r^-(\bar t, \gamma(\bar t))} |\bs u(t, x) - \bs u^-(t)|  \dif x \dif t +\int_{B_r^+(\bar t, \gamma(\bar t))} |\bs u(t, x) - \bs u^+(t)|  \dif x \dif t \Bigg) \\
      & \qquad =  \lim_{r \to 0^+} \int_{B_1} |\bs u_r^{\bar t}(s, y) - \bs u^{\bar t}_\infty|  \dif s \dif x  \\
      & +  2\|\bs u\|_{\mathbf L^\infty} \lim_{r \to 0^+} \frac{1}{r^2}\Big(\left|B_r^-(\bar t, \gamma(\bar t)) \Delta H^-_r \right| +\left|B_r^+(\bar t, \gamma(\bar t)) \Delta H^+_r \right|  \Big) = 0
       \end{aligned}
    \end{equation}
    where here 
    $$
    H^{\pm}_r := \Big\{(t,x) \in B_r(\bar t, \gamma(\bar t)) \; | \; x \lessgtr \gamma(\bar t) + (t-\bar t) \dot \gamma(\bar t) \Big\}
    $$
    and the last term in the right hand side goes to zero as $r \to 0$ due to the tangency property of $\gamma$ at the point $\bar t$.
\end{proof}

\section{From pointwise traces to \texorpdfstring{$\mathbf L^1$}{L¹} traces}\label{sec:PtoL1}
In this final section we discuss the equivalence with some other notions of trace, in particular we prove Theorems \ref{thm:main1} and \ref{thm:mainfes1}. As a first consequence of Theorem \ref{thm:main} we have the following, whose proof is quite standard once one knows Theorem \ref{thm:main}.
\begin{prop}\label{coro:main}
   Assume that the system \eqref{eq:systemi} is strictly hyperbolic and \textbf{GNL}. Let $\bs u \in \mathbf L^\infty((0, +\infty) \times \mathbb R)$ be a finite entropy solution of \eqref{eq:systemi}. Let $\gamma: (0,T) \to \mathbb R$ be a Lipschitz curve. Then  there holds
   \begin{equation}\label{eq:rtr}
       \lim_{r \to 0^+} \int_{0}^{T}\fint_{\gamma(t)}^{\gamma(t)+\delta} \big| \bs u(t, x) -\bs u^+(t)\big| \dif x \dif t  = 0
   \end{equation}
      \begin{equation}\label{eq:ltr}
       \lim_{r \to 0^+} \int_{0}^{T}\fint_{\gamma(t)-\delta}^{\gamma(t)} \big| \bs u(t, x) - \bs u^-(t)\big| \dif x \dif t  = 0
   \end{equation}
   where $\bs u^-, \bs u^+$ are the traces given by Theorem \ref{thm:main}.
\end{prop}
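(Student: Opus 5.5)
The plan is to deduce \eqref{eq:rtr} from the pointwise blow-up statement of Theorem \ref{thm:mainfes} (equivalently Theorem \ref{thm:main}) by a Fubini-type argument, with the limit understood as $\delta \to 0^+$. I treat only \eqref{eq:rtr}; \eqref{eq:ltr} is symmetric. Set
$$
F_\delta(t) := \fint_{\gamma(t)}^{\gamma(t)+\delta} |\bs u(t,x) - \bs u^+(t)| \dif x .
$$
Since $\bs u$ is bounded, $F_\delta \le 2\|\bs u\|_{\mathbf L^\infty}$, so by dominated convergence it suffices to show that $F_\delta \to 0$ in measure on $(0,T)$. Pointwise convergence for a.e.\ $t$ does not follow directly, because $F_\delta(t)$ is a one-dimensional average on a single time slice, whereas \eqref{eq:blowuptrace} controls two-dimensional averages. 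I will therefore average in time as well.

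\textbf{Step 1: pass to a time-averaged quantity.} For $\sigma \in (0,1)$ fixed, consider
$$
G_\delta(t) := \frac{1}{\sigma\delta}\int_{t}^{t+\sigma\delta} \fint_{\gamma(s)}^{\gamma(s)+\delta} |\bs u(s,x) - \bs u^+(t)| \dif x \dif s .
$$
The integration region lies in $B^+_{C\delta}(t,\gamma(t))$, where $C$ depends on $\mathrm{Lip}(\gamma)$. Hence $G_\delta(t) \le C' \sigma^{-1} \delta^{-2} \int_{B^+_{C\delta}} |\bs u - \bs u^+(t)|$, and this tends to $0$ for a.e.\ $t$ by \eqref{eq:blowuptrace}. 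By dominated convergence, $\int_0^T G_\delta \dif t \to 0$.

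\textbf{Step 2: compare $\int F_\delta$ with $\int G_\delta$.} Using the triangle inequality and Fubini in $(t,s)$,
$$
\int_0^T F_\delta \dif t \le \int_0^T G_\delta \dif t + \frac{1}{\sigma\delta}\int_0^T\int_t^{t+\sigma\delta} |\bs u^+(s) - \bs u^+(t)| \dif s \dif t + \text{boundary terms}.
$$
The boundary terms are $O(\sigma\delta)$ and come from exchanging the order of integration near $t = 0, T$. The middle term tends to $0$ as $\delta \to 0$ for fixed $\sigma$, because translations are continuous in $\mathbf L^1$: $\bs u^+$ is bounded and measurable, so $\|\bs u^+(\cdot + h) - \bs u^+\|_{\mathbf L^1} \to 0$. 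To make the exchange exact, I reparametrize the spatial variable as $x = \gamma(s) + y$ with $y \in (0,\delta)$; the domains then match exactly, and there is no spatial discrepancy to control.

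\textbf{Main obstacle.} The delicate point is that \eqref{eq:blowuptrace} holds only for a.e.\ $t$, with no uniformity, so Step 1 must pass through dominated convergence rather than any uniform estimate. In the same spirit, Step 2 must avoid evaluating $\bs u^+$ pointwise in $s$ and use only the $\mathbf L^1$-continuity of translations. With both steps in place, letting $\delta \to 0$ for fixed $\sigma$ yields \eqref{eq:rtr}.
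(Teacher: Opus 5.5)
Your proposal is correct and follows essentially the same route as the paper. The paper also averages the pointwise blow-up estimate over a time window of length comparable to the spatial scale, using the reparametrization $x=\gamma(\bar t+rs)+ry$, and applies dominated convergence in $\bar t$; it then removes the time shift via Fubini and the $\mathbf L^1$ modulus of continuity of $\bs u^+$, at the cost of $O(r)$ boundary terms. The differences are only cosmetic: the paper uses a symmetric window $s\in(-1,1)$ instead of your one-sided window of length $\sigma\delta$, and writes the comparison inequality in the reverse direction.
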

\begin{proof}
    We prove \eqref{eq:rtr}, the proof of \eqref{eq:ltr} being exactly the same. First, notice that by Theorem \ref{thm:mainfes} we have
    \begin{equation}\label{eq:zeropoint}
    \begin{aligned}
        & \lim_{r \to 0^+}   \int_{-1}^{1} \int_0^1 | \bs u^+(\bar t)  - \bs u(\bar t + r s, \gamma(\bar t + rs) + ry)|\dif y \dif s \\[6pt]
        & \quad=  \lim_{r \to 0^+}   \iint_{D_r} | \bs u^+(\bar t)  - \bs u(\bar t + r s, \gamma(\bar t) + rz)| \dif z \dif s  \\[6pt]
        & \quad \leq  \lim_{r \to 0^+}  \frac{1}{r^2} \iint_{B_{\kappa r}^+(\bar t, \gamma(\bar t))} | \bs u^+(\bar t)  - \bs u(t, x)| \dif y \dif s = 0 \quad \text{for a.e. $\bar t \in (t_1, t_2)$}
        \end{aligned}
    \end{equation}
   where we used \eqref{eq:blowuptrace} for the last equality.  Here in the second line we used the measure preserving change of variables 
    $$
     z = \frac{1}{r}\Big(\gamma(\bar t + rs) -\gamma(\bar t) + ry\Big)
    $$
    with $D_r$ being the corresponding domain of integration 
    $$
    D_r := \Big\{(\tau, z) \; | \; \tau = s, z = \frac{1}{r}\Big(\gamma(\bar t + rs) -\gamma(\bar t) + ry\Big) \quad \text{for some $(s, y)  \in (-1, 1)\times (0, 1)$}   \Big\}
    $$
    while the inequality follows from the fact that, since $\gamma$ is a Lipschitz curve, by choosing a constant $\kappa$ big enough, we can achieve 
    $$
    D_r \subset g(B^+_{\kappa r}) \qquad \text{for all $\bar t \in (t_1, t_2)$ and for all $r > 0$}
    $$
    where $g$ is the change of variables used in the last line of \eqref{eq:zeropoint}, mapping $(t,x)$ to 
    $$
    (s,z) = g(t,x) =  \Big(\frac{t-\bar t}{r}, \frac{x-\gamma(\bar t)}{r} \Big).
    $$
Let $\omega(\cdot)$ be the $\mathbf L^1$ modulus of continuity of $\bs u^+$ in $(0,T)$. Then we obtain, using Fubini, 
\begin{equation}\label{eq:longfubini}
    \begin{aligned}
        \int_0^{T} & \Bigg( \int_{-1}^{1} \int_0^1 | \bs u^+(\bar t)  - \bs u(\bar t + r s, \gamma(\bar t + rs) + ry)| \dif y \dif s \Bigg) \dif \bar t \\[6pt]
        &  = \int_{-1}^1 \Bigg(  \int_0^{T} \int_0^1 | \bs u^+(\bar t)  - \bs u(\bar t + r s, \gamma(\bar t + rs) + ry)|  \dif y \dif \bar t  \Bigg) \dif s \\[6pt]
        & \geq \int_{-1}^1 \Bigg(  \int_{t_1}^{t_2} \int_0^1 | \bs u^+(\bar t+ rs)  - \bs u(\bar t + r s, \gamma(\bar t + rs) + ry)|  \dif y \dif \bar t  \Bigg) \dif s -2\omega(r) \\[6pt]
        & = \int_{-1}^1 \Bigg(  \int_0^{T} \int_0^1 | \bs u^+(\bar t)  - \bs u(\bar t , \gamma(\bar t ) + ry)|  \dif y \dif \bar t  \Bigg) \dif s -2\omega(r) \\[6pt]
        & \geq \int_{-1}^1 \Bigg(  \int_0^{T} \int_0^1 | \bs u^+(\bar t)  - \bs u(\bar t , \gamma(\bar t ) + ry)|  \dif y \dif \bar t  \Bigg) \dif s -2\omega(r) - 4r\|\bs u\|_{\infty}\\[6pt]
        & = 2 \int_0^{T} \fint_0^r |\bs u^+(\bar t) - \bs u(\bar t, \gamma(\bar t) + z)| \dif z \dif \bar t -2\omega(r) - 4r\|\bs u\|_{\infty}.
    \end{aligned}
\end{equation}
Taking the limit in \eqref{eq:longfubini} we therefore obtain
\begin{equation}
    \begin{aligned}
        & \lim_{r \to 0^+} \int_0^{T}   \fint_0^r |\bs u^+(\bar t) - \bs u(\bar t, \gamma(\bar t) + z)| \dif z \dif \bar t \\[6pt]
        & \quad \leq \lim_{r \to 0^+}  \int_0^{T}  \Bigg( \int_{-1}^{1} \int_0^1 | \bs u^+(\bar t)  - \bs u(\bar t + r s, \gamma(\bar t + rs) + ry)| \dif y \dif s \Bigg) \dif \bar t \\[6pt]
        & \quad=   \int_0^{T}  \Bigg(\lim_{r \to 0^+}  \int_{-1}^{1} \int_0^1 | \bs u^+(\bar t)  - \bs u(\bar t + r s, \gamma(\bar t + rs) + ry)| \dif y \dif s \Bigg) \dif \bar t = 0
    \end{aligned}
\end{equation}
where the penultimate and last equalities follow respectively by   the dominated convergence theorem and \eqref{eq:zeropoint}, and this concludes the proof. 
    \end{proof}

We next prove Theorem \ref{thm:mainfes1}.

\begin{proof}[Proof of Theorem \ref{thm:mainfes1}]
Again, we can prove the theorem only for $\bs u^+$, since the proof for $\bs u^-$ is entirely symmetric.

\textbf{1.} In this first step, we prove the following preliminary formula for the weak kinetic normal trace $\bs \Phi^+$ of Proposition \ref{prop:weaktraces}: for almost every $t$  there holds
\begin{equation}\label{eq:yields}
    \bs \Phi^+(t, \xi) =  \Big( \bs \chi[\xi](\bs u^+(t)), \bs \psi[\xi](\bs  u^+(t)) \Big)\cdot \vec n(t)\qquad \text{for a.e. $\xi$}
\end{equation}
where we set for simplicity of notation $\bs \Phi^+(t, \xi) \equiv \bs \Phi^+(t, \gamma(t), \xi)$.
Consider any entropy-entropy flux pair of the form 
$$
\eta_\varrho(\bs u) := \int_{\underline w}^{\overline w} \varrho(\xi) \bs \chi[\xi](\bs u) \dif \xi, \quad q_\varrho(\bs u) := \int_{\underline w}^{\overline w} \varrho(\xi) \bs \psi[\xi](\bs u) \dif \xi.
$$
From \eqref{eq:rtr} we readily deduce the existence of a subsequence $\{\delta_n\}$ such that 
\begin{equation}\label{eq:deltanstrong}
\bs u(\cdot, \gamma(\cdot) + \delta_n) \longrightarrow \bs u^+(\cdot) \qquad \text{in $\mathbf L^1(0,T)$ as $n \to +\infty$.}
\end{equation}
Since by the weak normal trace properties of Section \ref{sec:weaktrace}, in particular by Proposition \ref{prop:normallimit}, we have that for almost every $\xi$ there holds 
$$
\bs \Phi^+(\cdot, \xi) = \text{weak}^\ast - \lim_{\delta \to 0^+} \Big( \bs \chi[\xi](\bs u(\cdot, \gamma(\cdot) + \delta), \bs \psi[\xi](\bs u(\cdot, \gamma(\cdot) + \delta)) \Big)\cdot \vec n(\cdot) \quad \text{in $\mathbf L^\infty(0,T)$}
$$
multiplying by $\varrho(\xi)$ and integrating in $\xi$ we obtain 
\begin{equation}
\begin{aligned}
    & \int \varrho(\xi) \bs \Phi^+(\cdot, \xi) \dif \xi  \\
    & \quad = \text{weak}^\ast - \lim_{\delta \to 0^+}  \int \varrho(\xi) \Big( \bs \chi[\xi](\bs u(\cdot, \gamma(\cdot) + \delta), \bs \psi[\xi](\bs u(\cdot, \gamma(\cdot) + \delta)) \Big)\cdot \vec n(\cdot) \dif \xi \\[6pt]
    & \quad = \text{weak}^\ast - \lim_{\delta \to 0^+} (\eta_\varrho(\bs u(\cdot, \gamma(\cdot) + \delta), q_{\varrho}(\bs u(\cdot, \gamma(\cdot) + \delta)) \cdot \vec n(\cdot)
    \end{aligned}
\end{equation}
Computing the limit along the sequence $\delta_n \to 0^+$ allows to pass the limit inside the nonlinear functions by \eqref{eq:deltanstrong}, and therefore we obtain 
\begin{equation}\label{eq:strongl}
\begin{aligned}
    & \int_{\underline w}^{\overline w} \varrho(\xi) \bs \Phi^+(\cdot, \xi) \dif \xi  \\
    & \quad =  \text{weak}^\ast - \lim_{n \to +\infty} (\eta_\varrho(\bs u(\cdot, \gamma(\cdot) + \delta_n)), q_{\varrho}(\bs u(\cdot, \gamma(\cdot) + \delta_n)) \cdot \vec n(\cdot)\\[6pt]
    & \quad = (\eta_\varrho(\bs u^+(\cdot)), q_{\varrho}(\bs u^+(\cdot)) \cdot \vec n(\cdot)\\
    & = \int_{\underline w}^{\overline w} \varrho(\xi) \Big(\bs \chi[\xi](\bs u^+(\cdot)), \bs \psi[\xi](\bs u^-(\cdot)) \Big) \cdot \vec n(\cdot) \dif \xi.
    \end{aligned}
\end{equation}
Since this holds for all $\varrho$, in turn this yields \eqref{eq:yields}.

\vspace{0.3cm}
\textbf{2.} Take now any other subsequence $\wt \delta_n$ for which 
$$
\bs u(\cdot, \gamma(\cdot) + \wt \delta_n) \longrightarrow \alpha_{(\cdot)} \in \mathscr P(\mc U) \qquad \text{in the sense of Young measures.}
$$
Computing the weak$^\ast$ limit above along this subsequence $\wt \delta_n \to 0^+$ we obtain in the same way
\begin{equation}\label{eq:weakl}
    \begin{aligned}
         \int \varrho(\xi) \bs \Phi^+(\cdot, \xi) \dif \xi & = \text{weak}^\ast - \lim_{n \to +\infty} (\eta_\varrho(\bs u(\cdot, \gamma(\cdot) + \wt\delta_n), q_{\varrho}(\bs u(\cdot, \gamma(\cdot) + \wt \delta_n)) \cdot \vec n(\cdot) \\
         & = \int_{\mc U}  \Big( \eta_{\varrho}(\bs q),  q_\varrho(\bs q) \Big)\cdot \vec n(\cdot) \dif \alpha_{(\cdot)}(\bs q).
    \end{aligned}
\end{equation}
Therefore from \eqref{eq:strongl}, \eqref{eq:weakl}, and the definition of $\eta_{\varrho}, q_{\varrho}$, we deduce that for every $\varrho \in C^0$ there holds for almost every $t$:
\begin{equation}
    \int_{\underline w}^{\overline w} \varrho(\xi) (\bs \chi[\xi](\bs u^+(t)), \bs \psi[\xi](\bs u^+(t)))\cdot \vec n(t) \dif \xi = \int_{\underline w}^{\overline w} \varrho(\xi) \int_{\mc U}  \Big( \bs \chi[\xi](\bs q), \bs \psi[\xi](\bs q) \Big)\cdot \vec n(t) \dif \alpha_{t}(\bs q).
\end{equation}
We then deduce that for a.e. $(t, \xi) \in (t_1, t_2) \times (\underline w, \overline w)$:
\begin{equation}\label{eq:deltarelh}
     \Big( \bs \chi[\xi](\bs u^+(t)), \bs \psi[\xi](\bs  u^+(t)) \Big)\cdot \vec n(t) = \int_{\mc U}  \Big( \bs \chi[\xi](\bs q), \bs \psi[\xi](\bs q) \Big)\cdot \vec n(t) \dif \alpha_t(\bs q). 
\end{equation}
Clearly, the same relation holds, with the same proof, for the epigraph entropies $\wt{\bs \chi}[\xi]$ for a.e. $(t, \xi) \in (t_1, t_2) \times (\underline w, \overline w)$
\begin{equation}\label{eq:deltarele}
     \Big( \wt{\bs \chi}[\xi](\bs u^+(t)), \wt{\bs \psi}[\xi](\bs  u^+(t)) \Big)\cdot \vec n(t) = \int_{\mc U}  \Big( \wt{\bs \chi}[\xi](\bs q), \wt{\bs \psi}[\xi](\bs q) \Big)\cdot \vec n(t) \dif \alpha_t(\bs q). 
\end{equation}
Moreover, the same holds for the entropies related to the second Riemann invariant, $\bs \upsilon[\zeta], \bs \varphi[\zeta]$.

\vspace{0.3cm}
\textbf{3.} Next,  by Step \textbf{1.}, for almost every $t \in (t_1, t_2)$, $\alpha_t$ is a probability measure satisfying the assumptions of Lemma \ref{lemma:alphaisdelta} (which is proved after the end of this proof) with $\bar {\bs u}  = \bs u^+(t)$. Therefore by Lemma \ref{lemma:alphaisdelta} we deduce 
$$
\alpha_t = \delta_{\bs u^+(t)} \qquad \text{for a.e. $t \in (t_1, t_2)$}.
$$
But this readily implies that there holds the strong convergence 
$$
\bs u(\cdot, \gamma(\cdot) + \wt \delta_n) \longrightarrow \bs u^+(\cdot) \qquad \text{strongly in $\mathbf L^1(t_1, t_2)$}.
$$
Since this holds for every subsequence $\wt \delta_n\to 0^+$, this concludes the proof of Theorem \ref{thm:mainfes1}.   
\end{proof}

We conclude by stating and proving the Lemma used in Step \textbf{2.} of the proof of Theorem \ref{thm:mainfes1}.

\begin{lemma}\label{lemma:alphaisdelta}
     Assume that the system \eqref{eq:systemi} is \textbf{GNL} and that it satisfies \eqref{eq:globhyp}. Let $\alpha \in \mathscr P(\mc U)$ be a probability measure, $\vec n \in \mathbb S^1$ of the form $\vec n = (\vec n_t, \vec n_x)$ with $\vec n_x \neq  0$,
    and $\bar {\bs u}\in \mc U$ a fixed point. Assume that for almost every
    $\xi, \zeta$ there holds that 
    \begin{equation}\label{eq:alphaisdeltawH}
         \Big( \bs \chi[\xi](\bar{\bs u}), \bs \psi[\xi](\bar{\bs u})\Big)\cdot \vec n = \int_{\mc U}  \Big( \bs \chi[\xi](\bs q), \bs \psi[\xi](\bs q) \Big)\cdot \vec n \dif \alpha(\bs q).
    \end{equation}
    \begin{equation}\label{eq:alphaisdeltawH1}
     \Big( \wt{\bs \chi}[\xi](\bar{\bs u}), \wt{\bs \psi}[\xi](\bar{\bs u}) \Big)\cdot \vec n = \int_{\mc U}  \Big( \wt{\bs \chi}[\xi](\bs q), \wt{\bs \psi}[\xi](\bs q) \Big)\cdot \vec n \dif \alpha(\bs q). 
\end{equation}
 \begin{equation}\label{eq:alphaisdeltazH}
         \Big( \bs \upsilon[\zeta](\bar{\bs u}), \bs \varphi[\zeta](\bar{\bs u})\Big)\cdot \vec n = \int_{\mc U}  \Big( \bs \upsilon[\zeta](\bs q), \bs \varphi[\zeta](\bs q) \Big)\cdot \vec n \dif \alpha(\bs q).
    \end{equation}
    \begin{equation}\label{eq:alphaisdeltazH1}
     \Big( \wt{\bs \upsilon}[\zeta](\bar{\bs u}), \wt{\bs \varphi}[\zeta](\bar{\bs u}) \Big)\cdot \vec n = \int_{\mc U}  \Big( \wt{\bs \upsilon}[\zeta](\bs q), \wt{\bs \varphi}[\zeta](\bs q) \Big)\cdot \vec n \dif \alpha(\bs q). 
\end{equation}
    Then $\alpha = \delta_{\bar u}$.
\end{lemma}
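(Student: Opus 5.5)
The plan is to pin down the two Riemann invariants of the support of $\alpha$ one at a time, comparing ``edge of the support'' behaviour on both sides of the identities. Write $\bar{\bs u}=(\bar w,\bar z)$, and let
\[
w_{\max}=\max\{w : (w,z)\in\operatorname{supp}\alpha\},\qquad w_{\min}=\min\{w : (w,z)\in\operatorname{supp}\alpha\}.
\]
Set $s:=-n_t/n_x$, the slope of the line orthogonal to $\vec n$. For every state $\bs q$ we have
\[
\bigl(\bs\chi[\xi](\bs q),\bs\psi[\xi](\bs q)\bigr)\cdot\vec n
= n_x\,\bs\chi[\xi](\bs q)\,\bigl(\bs\lambda_1[\xi](\bs q)-s\bigr),
\]
and analogously for the other families. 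The role of \eqref{eq:globhyp} is the following. Since $\sup\lambda_1<\inf\lambda_2$, at least one of the two alternatives $s>\sup\lambda_1$ or $s<\inf\lambda_2$ holds strictly. Correspondingly, either $n_x(\lambda_1-s)$ or $n_x(\lambda_2-s)$ has a uniform strict sign on all of $\mc U$. By \eqref{eq:coincideonline} and the smoothness of $\xi\mapsto\bs\Theta[\xi]$, we also have $|\bs\lambda_1[\xi](\bs q)-\lambda_1(\bs q)|\le C\,(w(\bs q)-\xi)$ for $\xi\le w(\bs q)$. Hence, after possibly shrinking $\bar r$, the kinetic normal flux of the ``good'' family is sign-definite and nonzero whenever $\xi\in(w(\bs q)-\bar r,w(\bs q))$, using Proposition \ref{prop:localspeed}(1).

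Assume first that the good family is the first one. \textbf{Step 1.} I will show $w_{\max}=\bar w$. If $w_{\max}<\bar w$, take $\xi\in(\max\{w_{\max},\bar w-\bar r\},\bar w)$. The right-hand side of \eqref{eq:alphaisdeltawH} vanishes, because $\bs\chi[\xi]=\bs\psi[\xi]=0$ on the support. The left-hand side, however, is sign-definite and nonzero, a contradiction. If instead $w_{\max}>\bar w$, take $\xi\in(\max\{\bar w,w_{\max}-\bar r\},w_{\max})$. Now the left-hand side is zero, while the right-hand side is an integral, over $\{w\ge\xi\}$ (a set of positive $\alpha$-measure), of an integrand with a strict fixed sign, again a contradiction. \textbf{Step 2.} The symmetric argument with $\wt{\bs\chi}$ and \eqref{eq:alphaisdeltawH1}, using the analogue of Proposition \ref{prop:localspeed} for the epigraph entropies, gives $w_{\min}=\bar w$. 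Therefore $\alpha$ is concentrated on the segment $\{w=\bar w\}$.

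\textbf{Step 3} treats the remaining invariant $z$, with $\alpha$ now a measure on $\{w=\bar w\}$. If $n_x(\lambda_2-s)$ also has a sign, Steps 1--2 repeat verbatim with $\bs\upsilon,\wt{\bs\upsilon}$ and \eqref{eq:alphaisdeltazH}--\eqref{eq:alphaisdeltazH1}, which concludes. Otherwise I will use genuine nonlinearity. Define $z_{\max}=\max\{z:(w,z)\in\operatorname{supp}\alpha\}$ and
\[
F(\zeta):=\int_{\{z\ge\zeta\}}\bs\upsilon[\zeta](\bs q)\,n_x\bigl(\bs\lambda_2[\zeta](\bs q)-s\bigr)\,\dif\alpha(\bs q).
\]
Suppose $z_{\max}>\bar z$. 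Then $F\equiv0$ on $(\max\{\bar z,z_{\max}-\bar r\},z_{\max})$, because the left-hand side of \eqref{eq:alphaisdeltazH} vanishes there.

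The derivative of $F$ in $\zeta$ has two parts: a boundary term coming from the moving set $\{z\ge\zeta\}$, and an interior term. Taking $\zeta$ close to $z_{\max}$, the contributions of $\bs\upsilon[\zeta]$ and of $\bs\lambda_2[\zeta]-s$ must cancel against each other for every such $\zeta$. I will argue that this is incompatible with $\frac{\dif}{\dif\zeta}\bs\lambda_2[\zeta]\ge c>0$ from Proposition \ref{prop:localspeed}(2) together with $\bs\upsilon\ge c$. To make this rigorous, I plan to compare $F(\zeta_1)=F(\zeta_2)=0$ for two nearby levels $\zeta_1<\zeta_2$ and subtract, in the spirit of the computation \eqref{eq:n1}. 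I will also use the fact that on $\{w=\bar w\}$ the first-family identity \eqref{eq:alphaisdeltawH}, taken for $\xi\uparrow\bar w$, provides the additional constraint $\int g(\bar w,z)(\lambda_1-s)\,\dif\alpha=g(\bar w,\bar z)(\lambda_1(\bar{\bs u})-s)$. A symmetric argument with $\wt{\bs\upsilon}$ handles $z_{\min}<\bar z$.

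I expect this last step to be the main obstacle: the kinetic normal flux of the ``bad'' family can change sign across the support of $\alpha$, so the clean edge-of-support argument of Steps 1--2 does not apply directly. This is precisely where the combination of the GNL monotonicity of $\bs\lambda_2[\zeta]$ and the already established concentration on $\{w=\bar w\}$ has to do the work.
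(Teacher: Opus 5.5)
\paragraph{Steps 1 and 2 are fine; the gap is Step 3.}
Your Steps 1--2 are correct. They coincide with the paper's Step 2, with the roles of the families exchanged: the paper takes the second family as the sign-definite one and first concentrates $\alpha$ on $\{z=\bar z\}$. Step 3, however, is only a plan, and as stated it has no mechanism that produces a contradiction.

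On $\{w=\bar w\}$, if $\lambda_2(\bar w,z_{\max})\neq s$, then $F$ is strictly signed for $\zeta$ near $z_{\max}$ and no differentiation is needed. The only delicate case is $\lambda_2(\bar w,z_{\max})=s$. Split
\[
\bs\lambda_2[\zeta](\bar w,z)-s=\bigl(\lambda_2(\bar w,z)-s\bigr)+\bigl(\bs\lambda_2[\zeta](\bar w,z)-\lambda_2(\bar w,z)\bigr).
\]
Proposition \ref{prop:localspeed}(2) only says that the second bracket is $\le -c(z-\zeta)$. Together with $\bs\upsilon\ge c$, this says nothing about the first bracket. If $\lambda_2(\bar w,\cdot)$ were decreasing through $z_{\max}$, the two brackets could balance inside $F$ for every $\zeta$; this already happens at leading order for a constant density near $z_{\max}$ with matching slopes. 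The family-one identity at $\xi\uparrow\bar w$ does not help either. On the line its integrand is strictly signed, so it is a single scalar constraint and cannot locate the mass in $z$.

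\paragraph{The missing ingredient.}
What you need is genuine nonlinearity of the eigenvalue itself along the line: $\partial_z\lambda_2(\bar w,z)=\nabla\lambda_2\cdot r_2>0$. Hence $z\mapsto(1,\lambda_2(\bar w,z))\cdot\vec n$ is strictly monotone and vanishes at most at one point $z_0$. This is what the paper uses, with $w$ and $z$ swapped.

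\begin{itemize}
\item \emph{Case $z_0<\bar z$.} Any edge $z_{\max}>\bar z$ lies where the flux of $\bs\upsilon[\zeta]$ is strictly signed, so your Step 1 argument gives $z_{\max}\le\bar z$. Now $\mathrm{supp}\,\alpha\subset\{w=\bar w,\ z\le\bar z\}$. Let $\zeta\uparrow\bar z$ in \eqref{eq:alphaisdeltazH}. The right-hand side tends to $\alpha(\{\bar{\bs u}\})\,h(\bar w,\bar z)\,(1,\lambda_2(\bar{\bs u}))\cdot\vec n$. The left-hand side tends to $h(\bar w,\bar z)\,(1,\lambda_2(\bar{\bs u}))\cdot\vec n$, which is nonzero since $\bar z\neq z_0$. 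Hence $\alpha(\{\bar{\bs u}\})=1$.
\item \emph{Case $z_0>\bar z$.} This is symmetric, using $\wt{\bs\upsilon}$ and \eqref{eq:alphaisdeltazH1}.
\item \emph{Case $z_0=\bar z$.} The paper does not spell this out. Here the edge argument works on both sides, because any edge $z_{\max}>\bar z$ or $z_{\min}<\bar z$ lies away from $z_0$.
\end{itemize}

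The paper handles the second side by this atom argument rather than by a second edge argument. This avoids controlling the flux at an edge that might coincide with $z_0$. No differentiation of $F$ is needed anywhere.
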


\begin{remark}
    For our purposes we only need the version of this Lemma where $\vec n = (\vec n_t, \vec n_x)$ with $\vec n_x \neq 0$, since the version with $\vec n_x =0$ corresponds to traces on an hyperplane of the form $\{t = \bar t\}$. However the version with $\vec n_x = 0$ is actually way easier and can be found for example in \cite{Tal26}, where the Lemma is proved in this particular case without the assumption \ref{eq:globhyp}.
\end{remark}

\begin{proof}[Proof of Lemma \ref{lemma:alphaisdelta}]
\textbf{1.} Let $\bar {\bs u} = (\bar w, \bar z) \in \mc U$ be the Riemann coordinates of the point $\bar {\bs u}$. Notice that, since by definition of $\bs \chi, \bs \psi$, one has that 
    $$
    \bs \chi[\xi](\bar w, \bar z) = 0, \quad \bs \psi[\xi](\bar w,\bar z) = 0 \qquad \text{for all}\; \xi > \bar w
    $$
and 
$$
\wt{\bs \chi}[\xi](\bar w, \bar z) = 0, \quad \wt{\bs \psi}[\xi](\bar w,\bar z) = 0 \qquad \text{for all}\; \xi < \bar w
$$
equations \eqref{eq:alphaisdeltawH}, \eqref{eq:alphaisdeltawH1} actually imply that 
 \begin{equation}\label{eq:alphadw}
         \int_{\mc U}  \Big( \bs \chi[\xi](\bs q), \bs \psi[\xi](\bs q) \Big)\cdot \vec n \dif \alpha(\bs q) = 0 \qquad \text{for all}\; \xi > \bar w
    \end{equation}
    \begin{equation}\label{eq:alphadw1}
      \int_{\mc U}  \Big( \wt{\bs \chi}[\xi](\bs q), \wt{\bs \psi}[\xi](\bs q) \Big)\cdot \vec n \dif \alpha(\bs q) = 0 \qquad \text{for all}\; \xi < \bar w .
\end{equation}
In an entirely similar way, one deduces that 
\begin{equation}\label{eq:alphadz}
         \int_{\mc U}  \Big( \bs \upsilon[\zeta](\bs q), \bs \varphi[\zeta](\bs q) \Big)\cdot \vec n \dif \alpha(\bs q) = 0 \qquad \text{for all}\; \zeta > \bar z
    \end{equation}
    \begin{equation}\label{eq:alphadz1}
      \int_{\mc U}  \Big( \wt{\bs \upsilon}[\zeta](\bs q), \wt{\bs \varphi}[\zeta](\bs q) \Big)\cdot \vec n \dif \alpha(\bs q) = 0 \qquad \text{for all}\; \zeta < \bar z .
\end{equation}

\vspace{0.5cm}
\textbf{2.} We notice that, due to assumption \eqref{eq:globhyp}, only one of the quantities 
$$
(1, \lambda_1) \cdot \vec n, \qquad (1, \lambda_2) \cdot \vec n
$$
can change sign in $\mc U$: this is clear because, if both change sign, then there are $\bs q_1, \bs q_2 \in \mc U$ for which
$$
0 = (1, \lambda_1(\bs q_1)) \cdot \vec n = (1, \lambda_2(\bs q_2)) \cdot \vec n
$$
implying $\lambda_1(\bs q_1)) = \lambda_2(\bs q_2)$, which is excluded by \eqref{eq:globhyp}.
Without loss of generality, we assume, up to a symmetry, that $(1, \lambda_1)\cdot \vec n$ can change sign, and therefore $(1, \lambda_2)\cdot \vec n$ has constant positive sign:
\begin{equation}\label{eq:lambda2pos}
(1, \lambda_2)\cdot \vec n > C > 0 \qquad \text{in $\mc U$}.
\end{equation}

Consider now 
$$
z^u := \sup \big\{ z \; | \; \exists \; w \; \text{such that} \; (w, z) \in \mr{supp} \, \alpha \big\}
$$
$$
z^\ell := \inf \big\{ z \; | \; \exists \; w \; \text{such that} \; (w, z) \in \mr{supp} \, \alpha \big\}
$$
We claim that $z^u = z^\ell = \bar z$, where we recall $\bar{\bs u} = (\bar w, \bar z)$. We start by recalling that, by \eqref{eq:coincideonline}, 
$$
\bs \varphi[\zeta](w, \zeta) = \lambda_2(w, \zeta) \bs \upsilon[\zeta](w, \zeta), \qquad \text{for all $w, \zeta$}  
$$
and therefore by smoothness of $\bs \varphi[\zeta](w,z), \bs \upsilon[\zeta](w,z)$ in the open region $\{(w, z, \zeta) \; | \; \zeta < z\}$ we deduce that for all $\zeta < z$ there holds:
\begin{equation}
    \Big( \bs \upsilon[\zeta](w, z), \bs \varphi[\zeta](w, z)\Big) \cdot \vec n = \bs \upsilon[\zeta](w, \zeta) (1, \lambda_2(w, \zeta)) \cdot \vec n + \mc O(1) \cdot |\zeta - z|
\end{equation}
We also recall that by \eqref{eq:chiupsline}
  $$\bs v[\zeta](w, \zeta)  = h(w, \zeta) > c > 0 \qquad \text{for all $w, \zeta$}$$ for some positive constant $c$, where $h$ is defined in Section \ref{sec:entropies}, therefore using also \eqref{eq:lambda2pos} we deduce that there exists a $\bar \delta > 0$ sufficiently small such that
\begin{equation}
     \Big( \bs \upsilon[\zeta](w, z), \bs \varphi[\zeta](w, z)\Big) \cdot \vec n > \wt c \qquad \text{for all $z-\bar \delta <   \zeta < z$}
\end{equation}
for some other positive constant $\wt c$.
Therefore, if by contradiction we have $z^u > \bar z$, we deduce from \eqref{eq:alphadz}, by taking 
$$
\zeta = z^u - \min\Big\{ \bar \delta, |z^u-\bar z|/2 \Big\}
$$
that 
$$
       0 =   \int_{\mc U}  \Big( \bs \upsilon[\zeta](\bs q), \bs \varphi[\zeta](\bs q) \Big)\cdot \vec n \dif \alpha(\bs q) \geq \wt c \int_{\mc U} \mathbf 1_{\{ z \in (\zeta, z^u]\}}(w, z) \dif \alpha(w, z)
$$
which is a contradiction because by definition of $z^u$ we must have instead 
$$
\int_{\mc U} \mathbf 1_{\{ z \in (\zeta, z^u]\}}(w, z) \dif \alpha(w, z) > 0.
$$
This shows $z^u = \bar z$; an entirely symmetric argument using \eqref{eq:alphadz1}  proves that $z^\ell = \bar z$ as well.

\textbf{3.} At this point, by Step \textbf{2.} we know that 
$$
\mr{supp} \, \alpha \subset \big\{(w, z) \; | \; z = \bar z\big\}
$$
Since the system is genuinely nonlinear, there exists at most one point $w_0$ such that 
$$
(1, \lambda_1(w_0, \bar z)) \cdot \vec n = 0
$$
because 
$$
\frac{\dif}{\dif w} (1, \lambda_1(w_0, \bar z)) \cdot \vec n = \vec n_x \cdot \frac{\dif}{\dif w}  \lambda_1(w_0, \bar z) > 0.
$$
Assume first that $w_0 < \bar w$. In the same way as in the previous step, we have that 
there exists a $\bar \delta > 0$ sufficiently small such that
\begin{equation}
     \Big( \bs \chi[\xi](w, z), \bs \psi[\xi](w, z)\Big) \cdot \vec n > \wt c \qquad \text{for all $w-\bar \delta <   \xi < w$}.
\end{equation}
Let 
$$
w^u := \sup \big\{ w \; | \; (w, \bar z) \in \mr{supp} \, \alpha \big\}.
$$
We first show that $w^u \leq \bar w$. By contradiction, if $w^u > \bar w$, we obtain as above by taking 
$$
\xi = w^u  - \min\Big\{\bar \delta, |w^u - \bar w|/2 \Big\}
$$
and using \eqref{eq:alphadw}, that 
$$
 0 =  \int_{\mc U}  \Big( \bs \chi[\xi](\bs q), \bs \psi[\xi](\bs q) \Big)\cdot \vec n \dif \alpha(\bs q) \geq \wt c \int_{\mc U} \mathbf 1_{\{w \in (\xi, w^u]\}}(w, z) \dif \alpha(w, z)
$$
which is a contradiction with the definition of $w^u$. Therefore we obtained $w^u \leq \bar w$. Now it is sufficient to use that $\alpha$ is a probability measure to conclude. In fact, consider now $\xi < \bar w$ such that \eqref{eq:alphaisdeltawH} holds, and we deduce
$$
         \Big( \bs \chi[\xi](\bar{\bs u}), \bs \psi[\xi](\bar{\bs u})\Big)\cdot \vec n = \int_{\mc U}  \Big( \bs \chi[\xi](\bs q), \bs \psi[\xi](\bs q) \Big)\cdot \vec n \dif \alpha(\bs q)
$$
By taking the limit as $\xi \to \bar w^-$, since the support of $\alpha$ is contained in the set $\{(w,z) \; | \; z = \bar z, \quad w \leq w^u\}$,  we obtain
$$
\begin{aligned}
 \Big( \bs \chi[\bar w](\bar{\bs u}), \bs \psi[\bar w](\bar{\bs u})\Big)\cdot \vec n & = \lim_{\xi \to \bar w^-} \int_{\mc U}  \Big( \bs \chi[\xi](\bs q), \bs \psi[\xi](\bs q) \Big)\cdot \vec n \dif \alpha(\bs q)\\
 & = \alpha(\{(\bar w, \bar z)\})\Big( \bs \chi[\bar w](\bar{\bs u}), \bs \psi[\bar w](\bar{\bs u})\Big) \cdot \vec n
\end{aligned}
$$
Since we assumed $w_0 < \bar w$, there holds 
$$
\Big( \bs \chi[\bar w](\bar{\bs u}), \bs \psi[\bar w](\bar{\bs u})\Big) > 0
$$
and therefore $\alpha(\{(\bar w, \bar z)\}) = 1$. Since $\alpha$ is a probability measure, this implies that $\alpha$ is concentrated in the point $\bar u = (\bar w, \bar z)$. 

The other case $w_0 > \bar w$ is proved symmetrically, using \eqref{eq:alphaisdeltawH}, and this concludes the proof of the Lemma.
\end{proof}

\appendix

\section{}

\begin{prop}\label{prop:lebp}
    Let $\Omega \subset \mathbb R^d$ and $\mu \in \mathscr{M}(\Omega)$  be a positive locally finite Borel measure, and let $\bs \Phi: \Omega \to \mathbf L^\infty(\mathbb R)$, $y \mapsto \bs \Phi(y, \cdot)$ be a weakly measurable map. Then for $\mu$-almost every $x$ there holds for all $p \in C^0_c(\mathbb R^d\times \mathbb R; \, \mathbb R)$:
    \begin{equation}
        \begin{aligned}
            \lim_{r \to 0^+} \frac{1}{\mu(B_r(x))}  \Bigg\{ & \int_{B_r(x)} p\left(\frac{y-x}{r}, \xi\right) \Big(\bs \Phi(y, \xi) - \bs \Phi(x, \xi)\Big) \dif \xi \dif \mu(y)\Bigg\} = 0.
        \end{aligned}
    \end{equation}
\end{prop}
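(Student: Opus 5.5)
The plan is to reduce the statement to the scalar Lebesgue differentiation theorem for $\mu$, applied to countably many real-valued functions, and then to pass from a countable dense family of test functions to all of $C^0_c$ by a uniform approximation argument. Throughout, I assume (as in the application, where $\bs\Phi$ is a normal trace) that $\bs\Phi$ is locally essentially bounded, i.e. $\|\bs\Phi(y,\cdot)\|_{\mathbf L^\infty}\le M$ for $\mu$-a.e. $y$ in compact sets. Without this, the statement has to be read with $\bs \Phi \in \mathbf L^\infty_{loc}(\mu;\mathbf L^\infty)$, and the same proof applies with $M$ replaced by a local bound.

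\textbf{Step 1 (separable test family).} Fix a countable set $\mathcal D$ of functions of product form $p(z,\xi)=a(z)b(\xi)$, with $a$ in a countable dense subset of $C^0_c(\mathbb R^d)$ and $b$ in a countable dense subset of $C^0_c(\mathbb R)$, both in the sup norm. Finite rational linear combinations of such products are dense in $C^0_c(\mathbb R^d\times\mathbb R)$ in the following sense: for every $p$ with support in $B_R\times[-R,R]$ there are combinations $p_k$ supported in $B_{R+1}\times[-R-1,R+1]$ with $\|p-p_k\|_\infty\to0$. This follows from Stone--Weierstrass. By linearity of the expression in $p$, it therefore suffices to treat $p=ab$.

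\textbf{Step 2 (Lebesgue points).} For each $b$ in the countable family, set $f_b(y):=\int b(\xi)\bs\Phi(y,\xi)\dif\xi$. This is a real $\mu$-measurable, locally bounded function, by the weak measurability of $\bs\Phi$. By the Lebesgue--Besicovitch differentiation theorem for the Radon measure $\mu$ on $\mathbb R^d$, $\mu$-a.e. $x$ satisfies
\[
\lim_{r\to0}\frac{1}{\mu(B_r(x))}\int_{B_r(x)}|f_b(y)-f_b(x)|\dif\mu(y)=0 .
\]
Intersecting over the countably many $b$, and discarding the null set where the local bound fails, gives a full-measure set $G$. For $x\in G$ and $p=ab$ with $\mathrm{supp}\,a\subset B_1$, the quantity in the statement equals $\frac{1}{\mu(B_r)}\int_{B_r(x)}a(\tfrac{y-x}{r})(f_b(y)-f_b(x))\dif\mu(y)$. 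Its absolute value is at most $\|a\|_\infty$ times the Lebesgue-point average, so it tends to $0$. If $\mathrm{supp}\,a\subset B_K$ with $K>1$, only $y\in B_r(x)$ matters in the statement's integral anyway, so the same bound holds.

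\textbf{Step 3 (density).} For general $p$, pick $p_k$ as in Step 1. The error between the expressions for $p$ and $p_k$ is bounded by $\|p-p_k\|_\infty\cdot 2(2R+2)M$, uniformly in $r$, after normalization by $\mu(B_r(x))$. Letting $r\to0$ and then $k\to\infty$ gives the claim for all $p$ simultaneously at every $x\in G$.

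The main obstacle is Step 3: making sure the exceptional set does not depend on $p$. This is exactly why the uniform-in-$r$ bound on the $\mu$-normalized integral, which uses the local essential boundedness of $\bs\Phi$, is needed. A secondary issue is measurability of $f_b$, which is immediate from weak measurability.
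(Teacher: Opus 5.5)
Your proposal is correct and follows the paper's proof. Both arguments reduce to product test functions $a(z)b(\xi)$ with $b$ from a countable dense family, take common $\mu$-Lebesgue points of the scalar functions $y\mapsto\int b(\xi)\bs\Phi(y,\xi)\dif\xi$, and bound the rescaled integral by $\|a\|_\infty$ times the Lebesgue-point average. Since that bound holds for every $a$ at once, you do not need $a$ to range over a countable set, and the paper does not restrict $a$ either.

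Your Step 3 controls the approximation error uniformly in $r$, using a local bound on $\|\bs\Phi(y,\cdot)\|_{\mathbf L^\infty}$ and uniformly bounded $\xi$-supports of the approximants. This spells out what the paper dismisses as ``clearly sufficient''. The boundedness you assume is implicitly needed there, and it holds in the application, where $\xi$ ranges over the bounded interval $(\underline w,\overline w)$ and the traces are bounded.
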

\begin{proof}Let $\mathscr G \subset C^0_c(\mathbb R)$ be a dense and countable subset in the uniform topology. 
    Since finite sums $\sum_i \varphi_i(y) \varrho_i(\xi)$ with $\varrho_i \in \mathscr G$ are dense in $C^0_c(\mathbb R^d \times \mathbb R; \, \mathbb R)$ in the uniform topology, it is clearly sufficient to prove the result for functions $p \in C^0_c$ that factorize as $p(y, \xi) = \varphi(y) \varrho(\xi)$, with $\varrho \in \mathscr G$. In order to do this, for every $\varrho \in \mathscr G$, define a function 
$$
F_\varrho(y) := \int \varrho(\xi) \bs \Phi(y, \xi) \dif \xi
$$
and let 
$$
G_{\varrho} := \Big\{x \in \Omega \; | \; \text{$x$ is a $\mu$-Lebesgue point for $F_{\varrho}$} \Big\}, \qquad G := \bigcap_{\varrho \in \mathscr G} G_{\varrho}
$$
and notice that there holds $\mu(\Omega \setminus G) = 0$. Then there holds for every $\varphi \in C^0_c(\mathbb R^d)$ and $x \in G$ that
$$
\begin{aligned}
 \lim_{r \to 0^+} & \frac{1}{\mu(B_r(x))} \Bigg|\int_{B_r(x)} \varphi\left(\frac{y-x}{r} \right) (F_\varrho (y) - F_\varrho(x))\dif \mu(y)\Bigg|  \\[6pt]
 & \leq \|\varphi\|_{C^0}\lim_{r \to 0^+}  \frac{1}{\mu(B_r(x))} \int_{B_r(x)}|F_\varrho (y) - F_\varrho(x)|\dif \mu(y) = 0
\end{aligned}
$$
which is what we wanted to prove.

\end{proof}

\section{}

The following is a standard geometric measure theory result valid in much more generality, but for simplicity we state it and prove it only our setting.
\begin{prop}\label{prop:besi}
    Let $\gamma: \mathbb R^+ \to \mathbb R$ be a Lipschitz curve and $\bs \nu \in \mathscr M(\mathbb R^+\times \mathbb R)$ be a positive locally finite Borel measure. Then for almost every $\bar t > 0$ there holds
    \begin{equation}
        \lim_{r \to 0} \,  \frac{1}{r}\bs \nu\big(B_r^{\pm}(\bar t, \gamma(\bar t))\big) = 0
    \end{equation}
    where 
    $$
    B_r^{+}(\bar t, \gamma(\bar t)) := \{(t,x) \in B_r(t, \gamma(\bar t)) \; | \; x >\gamma(t)\}
    $$
$$
 B_r^{-}(\bar t, \gamma(\bar t)) := \{(t,x) \in B_r(t, \gamma(\bar t)) \; | \; x < \gamma(t)\} 
$$
\end{prop}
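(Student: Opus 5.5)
We have to prove Proposition \ref{prop:besi}: for a positive locally finite measure $\bs \nu$ and a Lipschitz curve $\gamma$, the quantity $r^{-1}\bs\nu(B_r^\pm(\bar t,\gamma(\bar t)))$ tends to zero for a.e. $\bar t$. The plan is to split $\bs\nu$ according to the graph and compare it with the one-dimensional measure on the graph. Write $G := \mathrm{Graph}(\gamma)$, $\mu_\gamma := \mathscr H^1\llcorner G$. The half-balls $B_r^\pm$ do not intersect $G$, so
$$
\bs\nu(B_r^\pm(\bar t,\gamma(\bar t))) \le \bs\nu\llcorner(\mathbb R^+\times\mathbb R\setminus G)\,(B_r(\bar t,\gamma(\bar t))).
$$
It therefore suffices to show that the measure $\sigma := \bs\nu\llcorner G^c$ satisfies $\lim_r r^{-1}\sigma(B_r(p))=0$ for $\mu_\gamma$-a.e. $p\in G$. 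Since $t\mapsto(t,\gamma(t))$ is bi-Lipschitz onto $G$, a $\mu_\gamma$-null set of points corresponds to a Lebesgue-null set of times, and this gives the claim.

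For the key step, consider the upper density $\Theta^{*}(p):=\limsup_{r\to0} \sigma(B_r(p))/\mu_\gamma(B_r(p))$, noting that $\mu_\gamma(B_r(p))$ is comparable to $r$ with constants depending only on $\mathrm{Lip}(\gamma)$. By the Besicovitch differentiation theorem, applied to $\sigma$ with respect to the Radon measure $\mu_\gamma$ (legitimate in $\mathbb R^2$ via the Besicovitch covering theorem, cf. \cite{AFP00}), the density $D_{\mu_\gamma}\sigma$ exists and is finite $\mu_\gamma$-a.e. Moreover, it equals the density of the absolutely continuous part of $\sigma$ with respect to $\mu_\gamma$. Now $\sigma(G)=0$ while $\mu_\gamma$ is concentrated on $G$, so $\sigma\perp\mu_\gamma$ and its absolutely continuous part vanishes. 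Hence $D_{\mu_\gamma}\sigma=0$ $\mu_\gamma$-a.e., which is exactly the claim.

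To keep the argument self-contained, I would instead give the standard covering proof directly. Fix $\varepsilon>0$, a compact time interval, and the corresponding compact piece $K\subset G$. Let $E_\varepsilon := \{p\in K: \limsup_r r^{-1}\sigma(B_r(p))>\varepsilon\}$. By singularity, for any $\eta>0$ there is an open set $U\supset K$ with $\sigma(U)<\eta$, because $\sigma(K)=0$ and $\sigma$ is outer regular. Each $p\in E_\varepsilon$ admits arbitrarily small balls $B_r(p)\subset U$ with $\sigma(B_r(p))>\varepsilon r$. A Vitali $5r$-covering then yields disjoint balls $\{B_{r_i}(p_i)\}$ with
$$
\mu_\gamma(E_\varepsilon)\le \sum_i \mu_\gamma(B_{5r_i}(p_i)) \le C\sum_i r_i \le \frac{C}{\varepsilon}\sum_i\sigma(B_{r_i}(p_i))\le \frac{C\eta}{\varepsilon}.
$$
Letting $\eta\to0$ gives $\mu_\gamma(E_\varepsilon)=0$. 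Taking $\varepsilon=1/k$ and a countable union over compact pieces concludes. The estimate $\mu_\gamma(B_{5r}(p))\le C r$ for $p\in G$ holds by the Lipschitz parametrization.

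The main obstacle is the observation that $B_r^\pm$ avoid the graph. This lets us remove the part of $\bs\nu$ on $G$, which may be large there (e.g. entropy dissipation concentrated on a shock along $\gamma$). Without removing it, the density of $\bs\nu$ itself need not vanish. Once the singular restricted measure $\sigma$ is isolated, the density bound is routine covering.
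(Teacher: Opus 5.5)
Your proposal is correct and takes the same route as the paper: you discard $\bs\nu\llcorner\mathrm{Graph}(\gamma)$ (the half-balls avoid the graph), use Besicovitch differentiation to show that the remaining measure, being singular with respect to $\mathscr H^1\llcorner\mathrm{Graph}(\gamma)$, has zero density against it at a.e. point of the graph, and conclude with the upper bound $\mathscr H^1\llcorner\mathrm{Graph}(\gamma)\,(B_r)\le Cr$. Your Vitali $5r$-covering variant is also correct; it is just a self-contained proof of that same differentiation step.
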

\begin{proof}
Let 
$$\mu := \mathscr H^1 \llcorner  \mr{Graph} \, \gamma, \qquad \wt{\bs \nu}:= \bs \nu -\bs \nu \llcorner \mr{Graph} \, \gamma
    $$

    By the Besicovitch differentiation theorem (see for example \cite[Theorem 2.22]{AFP00}) we have
    $$
    \wt{\bs \nu} = a \mu + \wt{\bs \nu}^s
    $$
    where $a$ is defined $\mu$-a.e. by 
    $$
    a(t,x) := \lim_{r \to 0^+} \frac{\wt{\bs \nu}(B_r((t,x)))}{\mu(B_r((t,x)))} \qquad \text{for $(t,x) \in \mr{supp}\, \mu$}
    $$
    and $\wt{\bs \nu}^s$ is a measure concentrated on a $\mu$-negligible set. Since $\wt{\bs \nu}(\mr{Graph}\, (\gamma))=0$,  we must have $a(t,x) = 0$ for $\mu$-a.e. $(t, x)$. Moreover for some constant $C > 0$ depending only on the Lipschitz constant of $\gamma$ there holds
    $$
    \mu(B_r(\bar t, \gamma(\bar t))) \leq C r
    $$
   so that we deduce that for almost every $\bar t$ 
$$
\limsup_{r \to 0^+}  \frac{\wt{\bs \nu}\big(B_r(\bar  t, \gamma(\bar t) ) \big)}{r} \leq C^{-1} \limsup_{r \to 0^+} \;   \frac{\wt{\bs \nu}\big(B_r(\bar  t, \gamma(\bar t) ) \big)}{\mu(B_r(\bar t, \gamma(\bar t))} = 0
$$
    which is what we wanted to prove, because clearly for all $r > 0$:
$$
\bs \nu (B^{\pm}_r(\bar t, \gamma(\bar t))) \leq \wt{\bs \nu}(B_r(\bar t, \gamma(\bar t))).
$$
\end{proof}

\vspace{1cm}

\noindent {\bf Acknowledgements.} The author is partially supported by the GNAMPA--INDAM Project 2026, \say{Problemi di controllo e regolarità per evoluzione di insiemi}. He would also like to thank Prof.~Fabio Ancona for his careful reading of a preliminary version of the manuscript.

\vspace{0.5cm}
\noindent {\bf Declaration of interests.}  
The author reports there are no competing interests to declare.

\vspace{0.5cm}
\noindent {\bf Data availability statement.}  
Data sharing is not applicable to this article as no datasets were generated or analyzed during the current study.

\end{document}